\tikzstyle{block}=[draw opacity=0.7,line width=1.4cm]
\newtheorem{theorem}{Theorem}
\newtheorem{corollary}{Corollary}
\newtheorem{definition}{Definition}
\newtheorem{example}{Example}
\newtheorem{lemma}{Lemma}
\newtheorem{proposition}{Proposition}
\newtheorem{remark}{Remark}
\numberwithin{equation}{section}
\numberwithin{theorem}{section}
\numberwithin{lemma}{section}
\numberwithin{corollary}{section}
\numberwithin{definition}{section}
\numberwithin{example}{section}
\numberwithin{remark}{section}
\numberwithin{property}{section}
\numberwithin{proposition}{section}
\newcommand{\NS}[3][1]{#2_#1,\ldots,#2_#3}
\newcommand{\lift}[3][\pi]{\xymatrix{#2 \ar@{~>}[r]_{#1} & #3}}
\newcommand{\Minf}{\mathcal{M}_\infty}
\newcommand{\diag}[2][]{\mathrm{diag}_{#1}\left[ #2 \right]}
\newcommand{\N}[1]{\mathcal{N}(#1)}
\newcommand{\U}[1]{\mathbb{U}(#1)}
\newcommand{\I}{\mathbf{1}}
\newcommand{\RR}{\mathbb{R}}
\newcommand{\CC}{\mathbb{C}}
\newcommand{\ZZ}{\mathbb{Z}}
\newcommand{\Rep}[2]{\mathrm{C^\ast Rep}(#1,#2)}
\newcommand{\TT}[1][1]{{\mathbb{T}^{#1}}}
\newcommand{\disk}[1][2]{{\mathbb{D}^{#1}}}
\begin{document}
\title[Connecting Commuting Normal Matrices]{Connecting Commuting Normal Matrices}
\author{Fredy Vides}
\address{Department of Applied Mathematics, School of Mathematics, Faculty of Science, 
Universidad Nacional Aut\'onoma de Honduras, Ciudad Universitaria, Tegucigalpa, Honduras.}

\email{fredy.vides@unah.edu.hn}

\keywords{Matrix path, spectral variation, commutative matrix algebra, joint spectrum, pseudospectrum.}

\subjclass[2010]{47A58 (primary) and 15A60, 15A27 (secondary).} 

\date{\today}

\begin{abstract}
In this document we study the local path connectivity of sets of $m$-tuples of commuting normal matrices with some additional geometric constraints in their joint spectra. In particular, given $\varepsilon>0$ and any fixed but arbitrary $m$-tuple $\mathbf{X}\in {M_n(\mathbb{C})}^m$ in the set of $m$-tuples of pairwise commuting normal matrix contractions, we prove the existence of paths between arbitrary $m$-tuples in the intersection of the previously mentioned sets of $m$-tuples in ${M_n(\mathbb{C})}^m$ and the $\delta$-ball $B_\eth(\mathbf{X},\delta)$ centered at $\mathbf{X}$ for some $\delta>0$, with respect to some suitable metric $\eth$ in ${M_n(\mathbb{C})}^m$ induced by the operator norm. Two of the key features of these matrix paths is that $\delta$ can be chosen independent of $n$, and that the paths stay in the intersection of $B_\eth(\mathbf{X},\varepsilon)$, and the set pairwise commuting normal matrix contractions with some special geometric structure on their joint spectra.

We apply these results to study the local connectivity properties of matrix $\ast$-representations of some universal commutative $C^\ast$-algebras. Some connections with the local connectivity properties of completely positive linear maps on matrix algebras are studied as well.
\end{abstract}

\maketitle

\section{Introduction}
\label{intro}

In this document we study the local path connectivity of sets of $m$-tuples of commuting normal matrices, whose joint spectra may consist of points in some arbitrary element of the family of compact differentiable manifolds contained in $\mathbb{R}^m$. In particular, given $\varepsilon>0$ and any fixed but arbitrary $m$-tuple $\mathbf{X}\in {M_n(\mathbb{C})}^m$ in the set of $m$-tuples of pairwise commuting normal matrix contractions (where matrix contraction means a matrix $X\in M_n(\mathbb{C})$ such that $\|X\| \leq 1$), we prove the existence of paths between arbitrary $m$-tuples in the intersection of the previously mentioned sets of $m$-tuples in $M_n(\mathbb{C})^m$ and the $\delta$-ball $B_\eth(\mathbf{X},\delta)$ centered at $\mathbf{X}$, with respect to some suitable metric $\eth$ in ${M_n(\mathbb{C})}^m$ induced by the operator norm (that will be defined further below). Two of the key features of these matrix paths is that $\delta$ can be chosen independent of $n$, and that the paths stay in the intersection of $B_\eth(\mathbf{X},\varepsilon)$, and the set pairwise commuting normal matrix contractions with some special geometric structure on their joint spectra.

As mentioned in the previous paragraph, we start by studying local path connectivity properties of some particular varieties, these varieties consist of $m$-tuples of pairwise commuting normal matrix contractions with some geometric constraints on their joint spectra. Given $r>0$ and a $m$-tuple of $n\times n$ matrices $\mathbf{X}\in M_n(\mathbb{C})^m$, let us write $B_\eth(\mathbf{X},r)$ to denote the set $\{\mathbf{Y}\in (M_n)^m | \eth(\mathbf{X},\mathbf{Y})\leq r\}$. We call $B_\eth(\mathbf{X},r)$ a $r$-ball centered at $\mathbf{X}$ or a $r$-neighborhood of $\mathbf{X}$. Given $\varepsilon>0$, we will prove that there is $\delta>0$ such that for any $m$-tuple $\mathbf{X}:=(X_1,\ldots,X_m)\in M_n(\mathbb{C})^m$ in some subset $Z^m$ of the set of commuting $m$-tuples of normal contractions, and any $\tilde{\mathbf{X}}:=(\tilde{X}_1,\ldots,\tilde{X}_m)\in B_\eth(\mathbf{X},\delta)\cap Z^m$, we can find  a $m$-tuple of matrix paths $\gamma=(\gamma_1,\ldots,\gamma_m)\in C([0,1],B_\eth(\mathbf{X},\varepsilon))\cap Z^m$, such that $\gamma(0)=\mathbf{X}$, $\gamma(1)=\mathbf{\tilde{X}}$, and where $\gamma(t)\in Z^m$ for each $0\leq t\leq 1$. As mentioned before, $\delta$ does not depend on $n$.

The problems which motivated the research reported in this document have a topological nature. We started this study in \cite{Vides_homotopies}, where we found some connections between the map $\Psi:=\mathrm{Ad}[W]$ described by Lemma \ref{Joint_spectral_variation_inequality_2} in \S\ref{notation} and the local homeomorphims that play a key role in the Kirby Torus Trick (introduced by R. Kirby in \cite{Torus_Trick_Kirby}) by moving sets of points in homeomorphic copies $\mathcal{T}^m$ of $\TT[m]$ {\bf \em just a little bit}, this analogy together with L.\ref{existence_of_almost_unit} provides a connection with {\bf \em topologically controlled linear algebra} (in the sense of \cite{CLA_Freedman}), which can be roughly described as the study of the relations between matrix sets and 
differentiable manifolds, more specifically, the study of the connections between approximation of functions on differential manifolds and approximation techniques for families of matrices which satisfy some special constraints, like commutativity, hermiticity and spectral behavior among others. The search for the previously mentioned analogies and connections was motivated by a question raised by M. H. Freedman regarding to the role played by the Kirby Torus Trick in linear algebra and matrix approximation theory. In section \S\ref{main_results} we obtain uniform versions of the previously mentioned analogies based on Lemma \ref{existence_jpia}.

The results presented in section \S\ref{main_results} can be used to compute, and to study the approximate and exact simultaneous block diagonalization of matrix $m$-tuples in the sense of \cite{sim_block_diag,ASD_matrices}, problems of this type appear in biomathematics, image processing and applied spectral graph theory. Another important source of motivation and inspiration for this paper came from physics, more specifically from problems related to the classification of quantum phases in the sense of \cite{connectivity_CP_maps_q_phases}.

Building on some of the ideas developed by M. A. Rieffel in \cite{Finite_groups_Rieffel,Vector_bundles_Rieffel}, by O. Bratteli, G. A. Elliott, D. E. Evans and A. Kishimoto in \cite{Basic_homotopies}, by H. Lin in 
\cite{Lin_Theorem}, by D. Hadwin in \cite{Algebraic_lifting_Hadwin}, by K. Davidson in \cite{Davidson_1} and by P. Friis and M. R{\"o}rdam in \cite{Rordam_Lin_Thm}, we proved Lemma \ref{existence_of_almost_unit}, Lemma \ref{Existence_of_1D_PMA} and Lemma \ref{Existence_of_mD_PMA}, these results together with Lemma \ref{Joint_spectral_variation_inequality_2} provide us with the matrix approximation technology that we use to prove the main results.  The main results, consisting of Theorem \ref{main_result}, Corollary \ref{main_corollary}, Theorem \ref{main_generic_result} are presented in \S\ref{main_results}. Some applications of the results in \S\ref{main_results} to the study of the local connectivity properties of matrix representations of universal commutative $C^\ast$-algebras are presented in \S\ref{applications} and some future directions are outlined in \S\ref{hints}.

\section{Preliminaries and Notation}
\label{notation}
We will write $M_{m,n}$ to denote the set $M_{m,n}(\mathbb{C})$ of $m\times n$ complex matrices, if $m=n$ we will write $M_n$, we write $M_n^m$ to denote the set of $m$-tuples of $n\times n$ complex matrices. The symbols $\mathbf{1}_n$ and $\mathbf{0}_{m,n}$ will be used to denote the identity matrix and the zero matrix in $M_n$ and $M_{m,n}$ respectively, if $m=n$ we will write $\mathbf{0}_{n}$. Given a matrix $A\in M_n$, we write $A^\ast$ to denote the conjugate transpose $\bar{A}^\top$ of $A$. A matrix $X\in M_n$ is said to be normal if $XX^\ast=X^\ast X$, a matrix $H\in M_n$ is said to be hermitian if $H^\ast=H$ and a 
matrix $U\in M_n$ such that $U^\ast U=UU^\ast=\mathbf{1}_n$ is called unitary. We will write $\mathbf{i}$ to denote the number $\sqrt{-1}$. 

Let $(X,d)$ be a metric space. We say that $\tilde{X}_\delta\subset X$ is a $\delta$-dense subset of $X$ if for all $x\in X$ there exists $\tilde{x}\in \tilde{X}_\delta$ such that $d(x,\tilde{x})\leq\delta$. 
Given two compact subsets $X,Y$ of the complex plane, we will write $d_H(X,Y)$ to denote the Hausdorff distance between $X$ and $Y$ that is defined by the expression 
$d_H(X,Y):=\max\{sup_{x\in X} \inf_{y\in Y}|x-y|,sup_{y\in Y} \inf_{x\in X}|y-x|\}$. Given $N\in \mathbb{Z}^+$, we will write $\mathbb{X}_N$ to denote the finite set $\{x_k=-1+2k/(N-1)|0\leq k\leq N-1\}\subseteq [-1,1]$.

Given $d,m\in \mathbb{Z}^+$ with $d\leq m$, a compact manifold $Z^d\subseteq \mathbb{R}^m$, and a coordinate chart $\varphi:\mathbb{R}^d\to Z^d\subseteq \mathbb{R}^m,(x_1,\ldots,x_d)\mapsto (\varphi_1(x_1,\ldots,x_d),\ldots,\varphi_m(x_1,\ldots,x_d))$, we will write $\varphi^{-1}_j$ to denote the $j$th component the map $\varphi^{-1}:\mathbb{R}^m\supseteq Z^d\to R^d,(z_1,\ldots,z_m)\mapsto (\varphi_1^{-1}(z_1,\ldots,z_m),\ldots,\varphi_d^{-1}(z_1,\ldots,z_m))$, $1\leq j\leq d$.

From here on $\|\cdot\|$ denotes the operator norm defined for any $A\in M_n$ by $\|A\|:=\sup_{\|x\|_2=1}\|Ax\|_2$, where $\|\cdot\|_2$ denotes the Euclidean norm in $\CC^n$. Let us denote by $\eth$ the metric in $M_n^m$ defined by $\eth:M_n^m\times M_n^m\to \RR^+_0,(\mathbf{S},\mathbf{T})\mapsto \max_j \|S_j-T_j\|$. We will write $\disk[m]$ and $\TT[m]$ to denote the $m$-dimensional closed unit disk and the $m$-dimensional torus respectively. We will write $B(x_0,r)$ to denote the closed ball 
$\{x\in \CC\: | \: |x-x_0|\leq r\}$.

In this document by a matrix contraction we mean a matrix $X$ in $M_n$ such that $\|X\|\leq 1$.  A matrix $P\in M_n$ such that $P^\ast=P=P^2$ is called a projector or projection. Given two projectors $P$ and $Q$, if $PQ=QP=\mathbf{0}_n$ we say that $P$ and $Q$ are orthogonal. By an orthogonal partition of unity in $M_n$, we mean a finite set of pairwise orthogonal projectors $\{P_j\}$ in $M_n$ such that 
$\sum_j P_j=\mathbf{1}_n$. We will omit the explicit reference to $M_n$ when it is clear from the context.

Given any two matrices $X,Y\in M_n$ we will write $[X,Y]$ and $\mathrm{Ad}[X](Y)$ to denote the operations 
$[X,Y]:=XY-YX$ and $\mathrm{Ad}[X](Y):=XYX^*$.

Let $\mathbb{GL}_n$ denote the set of invertible elements in $M_n$. Given a matrix $A\in M_n$, we write $\sigma(X)$ to denote the set $\{\lambda\in \CC \: | \: A-\lambda\mathbf{1}_n\notin \mathbb{GL}_n\}$ of eigenvalues of $A$, the set $\sigma(A)$ is called the spectrum of $A$. Given a compact set $\mathbb{X}\subset \mathbb{C}$ and a subset $S\subseteq M_n$, let us denote by $S(\mathbb{X})$ the set 
$S(\mathbb{X}):=\{X\in S|\sigma(S)\subseteq \mathbb{X}\}$, for instance, the expression $\N{n}(\disk)$ is used to denote the set 
normal contractions in $M_n$.

As a consequence of \cite[T.VI.3.3]{Bhatia_mat_book} and 
\cite[C.VI.3.4]{Bhatia_mat_book} we have that.

\begin{proposition}
\label{spectral_variation_inequality_1}
If $A$ and $B$ are $n\times n$ normal matrices, then 
\[
d_H(\sigma(A),\sigma(B))\leq \|A-B\|.
\]
\end{proposition}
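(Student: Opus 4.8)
The plan is to derive the bound $d_H(\sigma(A),\sigma(B)) \leq \|A - B\|$ from the two cited results of Bhatia, namely \cite[T.VI.3.3]{Bhatia_mat_book} and \cite[C.VI.3.4]{Bhatia_mat_book}. The key observation is that for normal matrices the \emph{optimal matching distance} between spectra — in which eigenvalues are counted with multiplicity and paired by a permutation minimizing the largest displacement — coincides with (or dominates) the Hausdorff distance, since the Hausdorff distance between two finite sets is the infimum over \emph{all} pairings of the largest displacement, with no bijectivity constraint. So it suffices to bound the matching distance.

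The main step is as follows. First I would recall that Bhatia's Corollary VI.3.4 (the normal case of the Hoffman–Wielandt-type / spectral perturbation theorems in Chapter VI) asserts precisely that when $A$ and $B$ are normal, there is an ordering of the eigenvalues $\lambda_1(A),\ldots,\lambda_n(A)$ and $\lambda_1(B),\ldots,\lambda_n(B)$ (listed with multiplicity) for which
\[
\max_{1\leq j\leq n}\,\bigl|\lambda_j(A) - \lambda_j(B)\bigr| \;\leq\; \|A - B\|.
\]
Theorem VI.3.3 supplies the underlying symmetric-gauge/unitarily-invariant-norm inequality from which this specializes to the operator norm. Then I would observe that any such ordering furnishes, for every eigenvalue of $A$, an eigenvalue of $B$ within distance $\|A-B\|$, and symmetrically; hence both one-sided Hausdorff quantities $\sup_{x\in\sigma(A)}\inf_{y\in\sigma(B)}|x-y|$ and $\sup_{y\in\sigma(B)}\inf_{x\in\sigma(A)}|y-x|$ are at most $\|A-B\|$, which gives the claim by the definition of $d_H$ recalled in \S\ref{notation}.

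The only genuine subtlety — and the part I would be most careful about — is the bookkeeping of multiplicities and the case in which $A$ and $B$ have a different number of distinct eigenvalues: one must make sure the cited matching theorem is being applied to the two lists of eigenvalues repeated according to algebraic multiplicity (so both lists have length $n$), and then note that passing from the matched lists to the \emph{sets} $\sigma(A)$, $\sigma(B)$ only discards repetitions and therefore cannot increase either one-sided distance. There is no hard analytic obstacle here; the result is essentially a direct corollary of \cite[C.VI.3.4]{Bhatia_mat_book}, and the proof amounts to translating a matching-distance statement into a Hausdorff-distance statement.
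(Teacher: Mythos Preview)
Your proposal is correct and matches the paper's approach: the paper does not give an independent proof but simply records the proposition as a direct consequence of \cite[T.VI.3.3]{Bhatia_mat_book} and \cite[C.VI.3.4]{Bhatia_mat_book}, and your write-up spells out precisely how the Hausdorff bound follows from the optimal-matching bound in those references.
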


\begin{definition}
Given $\varepsilon\geq 0$ and a matrix $X\in M_n$, we write $\sigma_\varepsilon(X)$ to denote the $\varepsilon$-Pseudospectrum of $X$ which is the set defined by the following relations.
\begin{eqnarray*}
\sigma_\varepsilon(X)&:=&\{\tilde{\lambda}\in \CC \: | \: \tilde{\lambda}\in \sigma(X+E), \mathrm{for \: some} \: E\in M_n \: \mathrm{with } \: \|E\|\leq \varepsilon\}\\
&=&\{\tilde{\lambda}\in \CC \: | \: \|X\mathbf{v}-\tilde{\lambda}\mathbf{v}\|\leq \varepsilon, \mathrm{for \: some} \: \mathbf{v}\in \CC^n \: \mathrm{with } \: \|\mathbf{v}\|=1\}
\end{eqnarray*}
\end{definition}

\begin{definition}[Semialgebraic Matrix Varieties]
\label{matrix_variety}
Given $J\in \ZZ^+$, a system 
of $J$ polynomials $\NS{p}{J}\in \Pi_{\langle N \rangle}=\CC\langle x_1,\ldots,x_N\rangle$ in $N$ noncommutative variables  
and $J$ real numbers $\varepsilon_j\geq 0$, $1\leq j\leq J$, a matrix representation of the noncommutative semialgebraic set $\mathcal{Z}_{n}(\NS{p}{J})$ 
described by 
\begin{equation*}
 \mathcal{Z}_{n}(\NS{p}{J}):=\{\NS{X}{N}\in M_{n} | \|p_j(\NS{X}{N})\|\leq \varepsilon_j, 1\leq j\leq J\},
\end{equation*}
will be called a {\bf $n$-semialgebraic matrix variety} ($n$-SMV), if each $\varepsilon_j=0$, we will refer to the set as a {\bf matrix variety} and we may replace the normed polynomial relations by polynomial relations. 
\end{definition}

\begin{example} As a first example, we will have that the 
matrix set 
\[
\mathbf{Z}_n:=\left\{(X_1,\ldots,X_N)\in M_n^N \left|
\begin{array}{l}
X_jX_k-X_kX_j=\mathbf{0}_n,\\
X_j^*X_j=X_jX_j^*=\mathbf{1}_n,\\
\end{array}
1\leq j,k\leq N
\right.
\right\}
\]
is a matrix variety. If for some $\delta>0$, we set now 
\[
\mathbf{Z}_{n,\delta}:=\left\{(X_1,\ldots,X_N)\in M_n^N \left|
\begin{array}{l}
\|X_jX_k-X_kX_j\|\leq \delta,\\
\|X_j^*X_j-X_jX_j^*\|=0,\\
\|X_j\|\leq 1
\end{array}
1\leq j,k\leq N
\right.
\right\}
\]
the set $\mathbf{Z}_{n,\delta}$ is 
a matrix semialgebraic variety.
\end{example}

\begin{example} Other example of a matrix semialgebraic variety, that has been useful to understand the geometric nature of the 
problems solved in this document, is described by the matrix set $\mathbf{Iso}_{\delta}(x,y)$, 
defined for some given $\delta\geq 0$ and any two normal contractions 
$x$ and $y$ in $M_n$, by the expression
\[
 \mathbf{Iso}_{\delta}(x,y):=
\left\{(z,w)\in\N{n}(\disk)\times\U{n}\left|\: 
\begin{array}{c}
 \|xw-wz\|=0 \\
 \|[z,y]\|=0\\
\|z-y\|\leq \delta
\end{array}
\right.
\right\}.
\]
\end{example}

Given a $m$-tuple $\mathbf{X}\in M_n^m$ we will write either $\mathbf{X}[j]$ or $X_j$ to denote its $j$th component in $M_n$, with $1\leq j\leq m$. In a similar way given a path $\gamma\in C([0,1],M_n^m)$ we will write $\gamma_j$ to denote its $j$th component in $C([0,1],M_n)$, and we will write $\gamma_j(t)$ to denote the $j$th component of $\gamma(t)$ for some $0\leq t\leq 1$, with $1\leq j\leq m$.

Given a map $\Phi:M_n\to M_n$, we will write $\hat{\Phi}$ to denote its {\bf extension} to $M_n^m$ defined by $\hat{\Phi}:M_n^m\to M_n^m,(X_1,\ldots,X_m)\mapsto (\Phi(X_1),\ldots,\Phi(X_m))$.

\begin{definition}[Morphisms of Matrix Varieties]
Given a matrix variety $Z^m\subseteq M_n^m$, a morphism of $Z^m$ is any map $\Psi:Z^m\to Z^m$. Given 
a map $\Psi:M_n\to M_n$, if the extended map $\hat{\Psi}:M_n^m \to M_n^m$ is a morphism of $Z^m$, we say that $\hat{\Psi}$ is a morphism induced by $\Psi$. A morphism $\Phi:Z^m\to Z^m$ is said to be an inner morphism of $Z^m$ if it can be represented in the form $\Phi(X_1,\ldots,X_m)=(\mathrm{Ad}[W_1](X_1),\ldots,\mathrm{Ad}[W_m](X_m))$ for some fixed but arbitrary $W_1,\ldots,W_m\in M_n$ and any $(X_1,\ldots,X_m)\in Z^m$.
\end{definition}

\begin{remark}
\label{inner_morphisms_remark}
In particular, given a matrix varietiy $Z^m\subseteq M_n^m$, and given any inner morphism $\Phi:Z^m\to Z^m$ of the form $\Phi=\widehat{\mathrm{Ad}[W]}$ for some $W\in \U{n}$, it can be seen that if the components $X_j$ of $(X_1,\ldots,X_m)\in Z^m$ are normal matrices, unitaries, commuting matrices, hermitian matrices, projectors, or contractions, then the same will be true for the components $\mathrm{Ad}[W](X_j)$ of $\Phi(X_1,\ldots,X_m)\in Z^m$. Moreover, $\sigma(X_j)=\sigma(\mathrm{Ad}[W](X_j))$ for each $1\leq j\leq m$. 
\end{remark}

\begin{definition}[$\circledast$ operation]
 Given two matrix paths $\alpha,\beta\in C([0,1],M_n^m)$ we write ${\alpha \circledast \beta}$ to denote the concatenation of $\alpha$ and $\beta$, which is 
 the matrix path defined in terms of $\alpha$ and $\beta$ by the expression,
 \[
  {\alpha\circledast \beta}(s):=
  \left\{
  \begin{array}{l}
   \alpha(2s),\:\: 0\leq s\leq \frac{1}{2},\\
   \beta(2s-1),\:\: \frac{1}{2}\leq s\leq 1.
  \end{array}
  \right.  
 \]
\end{definition}

From here on, for any two $m$-tuples of matrices $\mathbf{X},\mathbf{Y}\in M_n^m$, we will write $\mathbf{X}\leadsto \mathbf{Y}$ to indicate that there is a matrix path $\gamma\in C([0,1],M_n^m)$ such that $\gamma(0)=\mathbf{X}$ and $\gamma(1)=\mathbf{Y}$.

Given $\varepsilon,\delta>0$, and any two $m$-tuples of pairwise commuting normal matrix contractions $\mathbf{X}:=(X_1,\ldots,X_m)$ and $\tilde{\mathbf{X}}:=(\tilde{X}_1,\ldots,\tilde{X}_m)$ in some (semialgebraic) matrix variety $Z^m\subseteq M_n^m$ such that 
$\eth(\mathbf{X},\tilde{\mathbf{X}})\leq \delta$, we will write $\mathbf{X}\leadsto_{\varepsilon} \tilde{\mathbf{X}}$ {\bf \em relative to} $B_\eth(\mathbf{X},\varepsilon)\cap Z^m$, to indicate that there is a piecewise differentiable path $\gamma\in C([0,1],B_\eth(\mathbf{X},\varepsilon)\cap Z^m)$ such that $\gamma(0)=\mathbf{X}$ and $\gamma(1)=\mathbf{\tilde{X}}$.

Given a matrix $A\in M_n$, we will write $\mathscr{D}(A)$ to denote the diagonal matrix defined by the following operation.
\begin{eqnarray}
\mathscr{D}(A)&:=&\mathrm{diag}[a_{11},a_{22},\ldots,a_{nn}]\\
&=&
\left(
\begin{array}{cccc}
a_{11} & 0 & \cdots & 0\\
0 & a_{22} & \cdots & \vdots\\
\vdots & \ddots & \ddots & 0\\
0 & \cdots & 0 & a_{nn}
\end{array}
\right)
\end{eqnarray}

It can be seen that $\mathscr{D}(\mathscr{D}(A))=\mathscr{D}(A)$ for any $A\in M_n$, the map $\mathscr{D}$ will be called the full pinching. 

\begin{remark}
By pinching inequalities (in the sense of \cite{Bhatia_matrix_inequalities}) we will have that $\|\mathscr{D}(A)-\mathscr{D}(B)\|=\|\mathscr{D}(A-B)\|\leq \|A-B\|$ for any two matrices $A,B\in M_n$.
\end{remark}

It is often convenient to have $N$-tuples (or $2N$-tuples) of matrices with real spectra. For this purpose we use the following construction. If $X=(\NS{X}{N})$ is a $N$-tuple of $n$ by $n$ matrices then we can always decompose $X_j$ in the form 
$X_j=X_{1j}+iX_{2j}$ where the $X_{kj}$ all have real spectra. We write 
$\pi(X):=(X_{11},\ldots,X_{1N},X_{21},\ldots,\\
X_{2N})$ and call $\pi(X)$ a {\bf partition} of $X$. If the $X_{kj}$ all commute we say that 
$\pi(X)$ is a commuting partition, and if the $X_{kj}$ are simultaneously triangularizable $\pi(X)$ is a triangularizable partition. If 
the $X_{kj}$ are all semisimple (diagonalizable) then $\pi(X)$ is called a semisimple partition. From here on, for any $m$-tuple of pairwise commuting normal matrices $X=(X_1,\ldots,X_m)$, we will assume that the components of $\pi(X)=(X_{11},\ldots,X_{1m},X_{21},\ldots,X_{2m})$ are determined by the relations
\begin{equation}
\left\{
\begin{array}{l}
X_{1j}=(X_j+X_j^\ast)/2,\\
X_{2j}=(X_j-X_j^\ast)/{(2\mathbf{i})},
\end{array}
\right.
\label{hermitian_partition_relations}
\end{equation}
for each $1\leq j\leq m$. In addition, given any operator $X$ in a unital separable $C^\ast$-algebra $A$, we will write $Re(X)$ and $Im(X)$ to denote the self-adjoint operators in $A$ defined by the equations $Re(X)=(X+X^\ast)/2$ and $Im(X)=(X-X^\ast)/(2\mathbf{i})$.

Given a $2m$-tuple $\mathbf{X}=(X_{11},\ldots,X_{1m},X_{21},\ldots,X_{2m})$ in 
$M_n^{2m}$, the $m$-tuple obtained by the operation 
$\upsilon(\mathbf{X}):=(X_{11}+\mathbf{i}X_{21},\ldots,X_{1m}+\mathbf{i}X_{2m})\in M_n^m$ will be called 
{\bf juncture} of $\mathbf{X}$.

We say that $N$ normal matrices $\NS{X}{N}\in M_n$ are {\em simultaneously diagonalizable} if there is a unitary matrix 
$Q\in M_n$ such that 
$Q^* X_jQ$ is diagonal for each $j=1,\ldots,N$. In this case, for $1\leq k\leq n$, let 
$\Lambda^{(k)}(X_j):=(Q^*X_jQ)_{kk}$ the $(k,k)$ element of $Q^*X_jQ$, and set 
$\Lambda^{(k)}(\NS{X}{N}):=(\Lambda^{(k)}(X_1),\ldots,\Lambda^{(k)}(X_N))$ in $\CC^N$. The set
\[
 \Lambda(\NS{X}{N}):=\{\Lambda^{(k)}(\NS{X}{N})\}_{1\leq k\leq N}
\]
is called the joint spectrum of $\NS{X}{N}$. Given a set $S\subseteq M_n$ of $m$-tuples of pairwise commuting normal matrices, we will write $\Lambda(S)$ to denote the set $\{\Lambda(X) \: |\: X\in S\}$, the set $\Lambda(S)$ will be called the joint spectra of $S$. We will write $\Lambda(X_j)$ to denote the diagonal matrix representation of the $j$-component of $\Lambda(\NS{X}{N})$, in other words 
we will have that
\[
 \Lambda(X_j)=\diag{\Lambda^{(1)}(X_j),\ldots,\Lambda^{(n)}(X_j)}.
\]

Given a $m$-tuple $\mathbf{X}=(X_1,\ldots,X_m)\in M_n^m$ of commuting normal matrices, any orthogonal projection $P\in M_n$ such that 
$X_jP=PX_j=\Lambda^{(r)}(X_j)P$ for each $1\leq j\leq m$ and some $1\leq r\leq n$, will be called a joint spectral projector of $\mathbf{X}$.

\begin{definition}[$C^\ast$-algebras generated by $m$-tuples] 
Given a $m$-tuple of normal matrix contractions $\mathbf{X}=(X_1,\ldots,X_m)\in M_n^m$ we write $C^\ast(\mathbf{X})$ to denote 
the $C^\ast$-algebra $C^\ast(X_1,\ldots,X_m)\subseteq M_n$. 
\end{definition}

The following result (\cite[Lemma 1]{H_Lin_Approx_Fin_Spec}) was proved in \cite{H_Lin_Approx_Fin_Spec}.

\begin{lemma}
\label{cont_funct_calc_tool}
Let $X$ be a compact sucset of the plane and let $f\in C(X)$. For any $\varepsilon>0$, there is $\delta>0$, for any $C^\ast$-algebra $A$ and normal elements $x,y\in A$ with $\sigma(x),\sigma(y)\in X$, if $\|x-y\|<\delta$, then $\|f(x)-f(y)\|<\varepsilon$.
\end{lemma}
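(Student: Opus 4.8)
The statement is the standard uniform continuity of the continuous functional calculus on normal elements, and the plan is the usual three-$\varepsilon$ reduction to $\ast$-polynomials via Stone--Weierstrass. First I would set $R:=\sup_{z\in X}|z|$, which is finite by compactness of $X$, and record that any normal element $x$ with $\sigma(x)\subseteq X$ satisfies $\|x\|=\max_{\lambda\in\sigma(x)}|\lambda|\leq R$, since the norm of a normal element equals its spectral radius. Because $X\subseteq\CC$ is compact and the unital $\ast$-subalgebra of $C(X)$ generated by the coordinate function is closed under complex conjugation and separates points, the Stone--Weierstrass theorem supplies, for the prescribed $\varepsilon>0$, a polynomial $p(z,\bar z)=\sum_{k,l}c_{kl}z^k\bar z^l$ in $z$ and $\bar z$ with $\sup_{z\in X}|f(z)-p(z)|<\varepsilon/3$. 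Note that this $p$ depends only on $f$, $\varepsilon$ and $X$.

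Next I would split $\|f(x)-f(y)\|\leq\|f(x)-p(x)\|+\|p(x)-p(y)\|+\|p(y)-f(y)\|$ and bound each term. For the two outer terms I use that the functional calculus $C(\sigma(x))\to C^\ast(x,\mathbf{1})$ is an isometric $\ast$-homomorphism, so $\|f(x)-p(x)\|=\sup_{\lambda\in\sigma(x)}|f(\lambda)-p(\lambda)|\leq\sup_{z\in X}|f-p|<\varepsilon/3$, and likewise for $y$; the same fact identifies the functional-calculus element $p(x)$ with the genuine $\ast$-polynomial $\sum_{k,l}c_{kl}x^k(x^\ast)^l$, which is what enters the middle term. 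For that term, the telescoping identity $x^k(x^\ast)^l-y^k(y^\ast)^l=(x^k-y^k)(x^\ast)^l+y^k\bigl((x^\ast)^l-(y^\ast)^l\bigr)$ together with $\|x\|,\|y\|\leq R$ yields $\|x^k(x^\ast)^l-y^k(y^\ast)^l\|\leq(k+l)(1+R)^{k+l}\|x-y\|$, hence $\|p(x)-p(y)\|\leq L_p\|x-y\|$ with $L_p:=\sum_{k,l}|c_{kl}|(k+l)(1+R)^{k+l}$ a constant depending only on $p$ and $R$, thus only on $f$, $\varepsilon$ and $X$. Choosing any $\delta$ with $0<\delta<\varepsilon/(3L_p)$ (and $\delta$ arbitrary when $L_p=0$) forces the middle term below $\varepsilon/3$ as soon as $\|x-y\|<\delta$, and summing the three estimates gives $\|f(x)-f(y)\|<\varepsilon$.

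There is no genuine conceptual obstacle here; the content is entirely in the uniformity bookkeeping. The point to be careful about is that $\delta$ must be assembled from $\varepsilon$, the polynomial $p$, and $R$ alone — with no reference to the ambient algebra $A$ or to the particular elements $x,y$ — which is exactly the quantifier order asserted in the lemma, and that the isometry of the continuous functional calculus is invoked correctly so that one may legitimately pass back and forth between $f(x)$, $p(x)$, and the algebraic expression $\sum c_{kl}x^k(x^\ast)^l$ on which the Lipschitz estimate for the middle term rests.
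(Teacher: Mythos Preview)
Your argument is correct and is precisely the standard proof of this fact: approximate $f$ uniformly on $X$ by a $\ast$-polynomial via Stone--Weierstrass, use the isometry of the continuous functional calculus to control the two outer terms, and use a telescoping Lipschitz estimate (with the a priori bound $\|x\|,\|y\|\leq R$ coming from the spectral radius formula) for the middle term; the constants depend only on $f$, $\varepsilon$ and $X$, which is exactly the required uniformity in $A$, $x$, $y$.

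There is nothing to compare against in the paper itself: the lemma is not proved there but is quoted verbatim from \cite{H_Lin_Approx_Fin_Spec} (Lin, Lemma~1). Your write-up therefore supplies what the paper merely cites. One very minor point you may wish to make explicit for completeness is that if $A$ happens not to be unital you pass to the unitization before invoking the functional calculus and before interpreting the constant term of $p$; this does not affect any of the estimates.
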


The following result (\cite[Lemma 4.1]{Vides_homotopies}) was proved in \cite{Vides_homotopies}.

\begin{lemma}\label{Joint_spectral_variation_inequality_2}
 Given $\varepsilon>0$ there is $\delta=\frac{1}{K_m}\varepsilon> 0$ such that, for any two $N$-tuples of pairwise commuting normal 
 matrices $\mathbf{X}=(\NS{X}{N})$ and $\mathbf{Y}:=(\NS{Y}{N})$ 
 such that $\eth(\mathbf{X},\mathbf{Y})\leq \delta$, there is a unitary matrix $W$ such that the $\ast$-homomorphism $\Psi=\mathrm{Ad}[W]$ satisfies the condition $\Psi:C^\ast(\mathbf{X})\to C^\ast(\mathbf{Y})'$ together with the constraint 
 $\max\{\eth(\hat{\Psi}(\mathbf{X}),\mathbf{Y}),\eth(\hat{\Psi}(\mathbf{X}),\mathbf{X})\}\leq \varepsilon$.
\end{lemma}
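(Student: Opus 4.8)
The plan is to produce $W$ by \emph{pairing} an orthonormal basis that jointly diagonalizes $\mathbf{X}$ with one that jointly diagonalizes $\mathbf{Y}$, the pairing being dictated by an optimal matching of the two joint spectra. The first observation is that it suffices to secure the \emph{single} inequality $\eth(\hat\Psi(\mathbf{X}),\mathbf{Y})\le K\delta$ for a constant $K=K(N)$ depending only on the number $N$ of matrices in the tuples: since $\eth(\mathbf{X},\mathbf{Y})\le\delta$, the triangle inequality then yields $\eth(\hat\Psi(\mathbf{X}),\mathbf{X})\le (K+1)\delta$, so that taking $\delta:=\varepsilon/(K+1)$ (i.e.\ $K_m:=K+1$ in the notation of the statement) makes both required bounds hold. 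The second observation is structural: because $\mathbf{X}=(\NS{X}{N})$ and $\mathbf{Y}=(\NS{Y}{N})$ are $N$-tuples of pairwise commuting normal matrices in $M_n$, each is simultaneously diagonalizable, so there are orthonormal bases $\{u_1,\dots,u_n\}$ and $\{v_1,\dots,v_n\}$ of $\CC^n$ and vectors $\xi_l=(\xi_{l,1},\dots,\xi_{l,N})$, $\nu_l=(\nu_{l,1},\dots,\nu_{l,N})$ in $\CC^N$ with $X_j u_l=\xi_{l,j}u_l$ and $Y_j v_l=\nu_{l,j}v_l$; the multisets $\{\xi_l\}$ and $\{\nu_l\}$ are exactly the joint spectra $\Lambda(\mathbf{X})$ and $\Lambda(\mathbf{Y})$ counted with multiplicity.

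Granting for the moment that the bases can be enumerated so that $\|\xi_l-\nu_l\|_\infty=\max_{1\le j\le N}|\xi_{l,j}-\nu_{l,j}|\le K\delta$ for every $l$, the rest is immediate. Define $W\in\U{n}$ by $Wu_l:=v_l$. Then $W X_j W^\ast v_l = W X_j u_l = \xi_{l,j}v_l$, so each $\Ad{W}{X_j}$ is diagonal in the basis $\{v_1,\dots,v_n\}$; since every $Y_k$ is also diagonal in that basis, $\Ad{W}{X_j}$ commutes with every $Y_k$, hence with all of $C^\ast(\mathbf{Y})$. Because $\Psi=\Ad{W}{\cdot}$ is a $\ast$-isomorphism of $M_n$, $\Psi(C^\ast(\mathbf{X}))=C^\ast(\Ad{W}{X_1},\dots,\Ad{W}{X_N})\subseteq C^\ast(\mathbf{Y})'$, which is the asserted mapping property $\Psi:C^\ast(\mathbf{X})\to C^\ast(\mathbf{Y})'$. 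Finally, $\Ad{W}{X_j}$ and $Y_j$ being diagonal in the same basis, $\|\Ad{W}{X_j}-Y_j\|=\max_l|\xi_{l,j}-\nu_{l,j}|\le K\delta$ for each $j$, i.e.\ $\eth(\hat\Psi(\mathbf{X}),\mathbf{Y})\le K\delta$, which is the single inequality isolated above.

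Everything therefore hinges on the enumeration claim, which I expect to be the main obstacle: there is a bijection between $\Lambda(\mathbf{X})$ and $\Lambda(\mathbf{Y})$, viewed as multisets of $n$ points of $\CC^N$, whose matched pairs lie within $\ell^\infty$-distance $K\delta$, with $K$ depending on $N$ but \emph{not} on $n$. This is an optimal-matching (``joint spectral variation'') estimate for commuting normal tuples, uniform in dimension, and it is essentially the companion result of \cite{Vides_homotopies}. For $N=1$ it is classical: the optimal matching distance between the spectra of two normal matrices is at most an absolute constant times their operator-norm distance (for Hermitian matrices, at most that distance itself, by Weyl's inequality; this refines the Hausdorff bound of Proposition \ref{spectral_variation_inequality_1}). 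For general $N$ one would argue by induction: pass to real and imaginary parts to reduce to a commuting tuple of $2N$ Hermitian matrices, split off one coordinate, compare the spectral projections of $\mathbf{X}$ and of $\mathbf{Y}$ attached to that coordinate, refine them so that the multiplicities agree, and match the induced $(2N-1)$-tuples block by block. The delicate part — and the reason a direct clustering of eigenvalues does not work — is to run this refinement so that the accumulated constant stays a function of $N$ alone even when the joint eigenvalues form long chains with no gaps large relative to $\delta$; here one must rely on the scalar normal-matrix estimate, which already copes with such nearly-continuous spectra, rather than on separating eigenvalue clusters. Once this estimate is in hand, the two paragraphs above complete the proof with $K_m=K+1$.
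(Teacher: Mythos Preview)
The paper does not actually prove this lemma: it is quoted verbatim as \cite[Lemma~4.1]{Vides_homotopies} and used as a black box, so there is no in-paper argument to compare against. What can be said is that your outline is precisely the natural proof one expects to find in that reference: simultaneously diagonalize $\mathbf{X}$ and $\mathbf{Y}$, produce a bijection of the two joint spectra (as multisets in $\CC^N$) with $\ell^\infty$-displacement $\le K(N)\,\delta$, and let $W$ send the $\mathbf{X}$-eigenbasis to the $\mathbf{Y}$-eigenbasis according to that bijection. Your reductions are clean and correct: the triangle-inequality step reducing to the single bound $\eth(\hat\Psi(\mathbf X),\mathbf Y)\le K\delta$, the observation that $\Ad{W}{X_j}$ is diagonal in the $\{v_l\}$ basis and hence lands in $C^\ast(\mathbf Y)'$, and the identification $\|\Ad{W}{X_j}-Y_j\|=\max_l|\xi_{l,j}-\nu_{l,j}|$.

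You are also right that the entire content sits in the dimension-free matching estimate, and you are honest that your inductive sketch of it is heuristic. Two comments on that sketch. First, the passage to $2N$ Hermitian coordinates is harmless, but the ``split off one coordinate and match block by block'' step is exactly where naive arguments lose uniformity in $n$: when the first-coordinate eigenvalues form a long $\delta$-chain, the spectral projections of $X_1$ and $Y_1$ over any fixed window need not have matching ranks, so one cannot simply restrict and recurse. The cure in \cite{Vides_homotopies} is closer in spirit to the Bhatia--Davis--McIntosh/Hoffman--Wielandt circle of ideas (combinatorial matching via a doubly-stochastic/permutation argument on the joint diagonalizing unitary) than to a coordinate-by-coordinate induction; you may want to look there rather than try to push the induction through. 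Second, your remark that the $N=1$ normal case ``already copes with such nearly-continuous spectra'' is correct, but that scalar estimate does not by itself bootstrap to tuples, because a permutation optimal for one coordinate need not be optimal (or even bounded) for another. In short: the architecture of your proof is the right one and matches what the paper imports; the only part you have not proved is exactly the part the paper also outsources.
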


\begin{remark}
The constant $K_m$ in the statement of L.\ref{Joint_spectral_variation_inequality_2} depends only on $m$.
\end{remark}

\section{Spectral Approximation and Local Path Connectivity of Sets of Commuting Normal Matrices}
\label{main_results}

\subsubsection{Matrix Representations of $C(\mathbb{X}_N)$} Let us consider the universal commutative $C^\ast$-algebra $C(\mathbb{X}_N)$ for some fixed but arbitrary $N\in \ZZ^+$. We will have that $C(\mathbb{X}_N)$ has a universal picture that can be expressed in terms of generators and relations as follows.
\begin{equation}
C(\mathbb{X}_N):=
 C^*_1\left\langle p_1,\ldots,p_N \left| 
                     \begin{array}{l}
                        p_j p_k=p_k p_j\\
                       p_j^2-p_j=p_j-p_j^\ast=0\\
                       p_1^2+\ldots+p_N^2-1=0
                      \end{array}, 1\leq j,k\leq N
\right.\right\rangle,
\label{spheres_of_projections}
\end{equation}
From here on, given a universal $C^\ast$-algebra $A$ and a $C^\ast$-subalgebra $B\subseteq M_n$, we will write $\mathrm{C^\ast Rep}(A,B)$ to denote the set of $\ast$-representations of $A$ in $B$. 

\subsubsection{Eventually uniform matrix representations} \label{hard_approximation}
Let $N\in \ZZ^+$ be fix but arbitrary. Given $n\in \ZZ^+$, let us consider the case $n<N$ first. Without loss of generality we can consider the $\ast$-homomorphism induced by the embedding $\imath_{n,N}:M_n\hookrightarrow M_N$ determined by the map
\begin{eqnarray}
\imath_{r,s}&:&M_r\to M_{s}\nonumber \\
&:&X\mapsto \left(
\begin{array}{cc}
X & \mathbf{0}_{n,N-n}\\
\mathbf{0}_{N-n,n} & \mathbf{0}_{N-n}
\end{array}
\right)
\label{canonical_matrix_embeddding}
\end{eqnarray}
for any two positive integers $r\leq s$. We can use \eqref{canonical_matrix_embeddding} to compute for any orthogonal partition of unity $\mathbf{P}_n=\{P_1,\ldots,P_n\}\subseteq M_n$, a set of orthogonal projections  $\mathbf{p}_n=\{p_1,\ldots,p_n\}=\{\imath_{n,N}(p_1),\ldots,\imath_{n,N}(p_n)\}=\{P_1\oplus \mathbf{0}_{N-n},\ldots,P_n\oplus \mathbf{0}_{N-n}\}\subseteq M_N$. We will have that the map 
$\kappa_{N,n}:M_N\to M_n:X\mapsto \mathrm{Ad}[K](X)$, with $K=(\I_n \:\:\: \mathbf{0}_{n,N-n})$, induces a $\ast$-homomorphism $\rho^>_{N,n}:C^\ast(\mathbf{p}_n)\to C^\ast(\mathbf{P}_n)$ such that $\rho^>_{N,n}(p_j)=P_j$ for each $1\leq j\leq n$, by universality of 
\eqref{spheres_of_projections} the map $\rho^>_{N,n}$ induces a 
$\ast$-homomorphism $\rho_n:C(\mathbb{X}_N)\to C^\ast(\mathbf{P}_n)\subseteq M_n$, for each $n<N$. If $n\geq N$, by universality of \eqref{spheres_of_projections}, we will have that for any orthogonal partition of unity $\mathbf{P}_n=\{P_1,\ldots,P_N\}\subseteq M_n$, there is a $\ast$-homomorphism $\rho^{\leq}_{N,n}:C(\mathbb{X}_N)\to C^\ast(\mathbf{P}_n)\subseteq M_n$ such that $\rho^{\leq}_{N,n}(p_j)=P_j$ for each $1\leq j\leq N$. Let us set
\begin{equation}
\rho_{N,n}:=
\left\{
\begin{array}{l}
\rho^>_{N,n},\:\: n<N\\
\rho^\leq_{N,n},\:\: n\geq N
\end{array}
\right..
\label{eventual_connecting_morphism}
\end{equation}
It can be seen that the map $\rho_{N,n}$ is a $\ast$-homomorphism for each $n,N\in \mathbb{Z}^+$. We will call 
each map $\rho_{N,n}$ of the sequence $\{\rho_{N,n}\}_{n\in \ZZ^+}$ an {\bf eventually uniform} matrix representation ({\bf EUMR}) of $C(\mathbb{X}_N)$.

In order to provide some perspective on how our connectivity and approximation results do not depend on the matrix size $n$, we will describe what we call the {\bf approximate eventual uniformity} of matrix representaions of $C(X)$, where $X$ is a compact Hausdorff topological space. 

\begin{definition}
Given a compact Hausdorff space $X$, we say that the commutative $C^\ast$-algebra $C(X)$ is approximately eventually uniform ({\bf AEU}) if for any finite set of normal contractions $\{z_1,\ldots,z_m\}\subset C(X)$, there is $\delta>0$ and an integer $N_{\delta,m}>0$ such that 
for any integer $n>1$ and any $\rho_n\in \mathrm{C^\ast Rep}(C(X),M_n)$, there is a {\ EUMR} $\rho_{N_{\delta,m},n} \in \mathrm{C^\ast Rep}(C(\mathbb{X}_{N_{\delta,m}}),\rho_n(C(X))')$ together with a $\ast$-homomorphism 
$\rho_{\delta,n}:C(X)\to C^*(\rho_{N_\delta,n}(\mathrm{id}_{\mathbb{X}_{N_\delta}}))$ such that $\|\rho_n(z_j)-\rho_{\delta,n}(z_j)\|\leq \delta$ for each $1\leq j\leq m$.
\end{definition}

By considering natural embeddings of compact Hausdorff spaces into $[-1,1]^m$ for some $m\in \mathbb{Z}^+$, we have that one of the main problems that we need to solve in this document, is an operator approximation problem that can be stated via the following diagram.
\begin{equation}
\xymatrix{
  C(\mathbb{X}_{N_\delta}) \ar@{-->}[d] & C(\mathbb{I}^{m}) \ar@{-->}[ld]_{\approx_\delta} \ar[d] \ar[rd] \ar[d] & C(\mathbb{X}_{n}) \ar[d]\\
  C^\ast(\hat{\mathbb{X}}_{N_\delta}) \ar[rd]_{\imath_1} & C^\ast(H_1,\ldots,H_{m}) \ar[d]_{\imath_2} & C^\ast(\hat{\mathbb{X}}_{n}) \ar@{^{(}->}[d]\\
  & C^\ast(\hat{\mathbb{X}}_{N_\delta},H_1,\ldots,H_{m}) \ar[ru] & M_n
}
\label{first_approximation_problem}
\end{equation}
To {\bf solve} \eqref{first_approximation_problem}, given a $\ast$-representation $\pi:C(\mathbb{I}^{m})\to C^\ast(H_1,\ldots,H_{m})$ such that $\pi(h_j)=H_j$ for each $1\leq j\leq m$, one needs to find $\pi_n\in \mathrm{C^\ast Rep}(C(\mathbb{X}_{N_\delta}),\pi(C(\mathbb{I}^m))')$ and a $\ast$-representation $\pi_{\delta}:C(\mathbb{I}^{m})\to C^\ast(\hat{\mathbb{X}}_{N_\delta})$ such that: 
\begin{equation}
\left\{
\begin{array}{l}
(\imath_2\circ\pi)(h_j)(\imath_1\circ\pi_\delta)(h_k)=(\imath_1\circ\pi_\delta)(h_k)(\imath_2\circ\pi)(h_j)\\
\|(\imath_2\circ\pi)(h_j)-(\imath_1\circ\pi_\delta)(h_j)\|\leq \delta
\end{array}, 1\leq j,k\leq m.
\right.
\label{first_approximation_problem_solution}
\end{equation}
Here $\hat{\mathbb{X}}_{N_\delta},\hat{\mathbb{X}}_{n}\in C^*(H_1,\ldots,H_m)'$ are defined by the equations $\hat{\mathbb{X}}_{N_\delta}=\pi_n(\mathrm{id}_{\mathbb{X}_{N_\delta}})$ and $\hat{\mathbb{X}}_{n}=\rho_n(\mathrm{id}_{\mathbb{X}_{n}})$ for some $\ast$-homomorphism $\rho_{n}\in \mathrm{C^*Rep}(C(\mathbb{X}_{n}),C^*(H_1,\ldots,H_m)')$.

Let us define two semialgebraic matrix varieties that will play a key role in this study.

\begin{definition} The semialgebraic matrix variety $\mathbb{I}^m(n)$ defined by
\[
\mathbb{I}^m(n):=
\left\{
(X_1,\ldots,X_m)\in M_n^m \left|
\begin{array}{l}
[X_j,X_k]=\mathbf{0}_n\\
X_j-X_j^\ast=\mathbf{0}_n\\
\|X_j\|\leq 1
\end{array}
, 1\leq j,k \leq m\right.
\right\}
\]
will be called the {\em \bf matrix $m$-cube}.
\end{definition}

\begin{definition} The semialgebraic matrix variety $\mathbb{D}^m(n)$ defined by
\[
\mathbb{D}^m(n):=
\left\{
(X_1,\ldots,X_m)\in M_n^m \left|
\begin{array}{l}
[X_j,X_k]=\mathbf{0}_n\\
X_jX_j^\ast-X_j^\ast X_j=\mathbf{0}_n\\
\|X_j\|\leq 1
\end{array}
, 1\leq j,k \leq m\right.
\right\}
\]
will be called the {\em \bf matrix $m$-disk}.
\end{definition}

Let us now extend the concept of semialgebraic matrix variety to what we will call an {\em induced compact differential matrix manifold} or {\em compact differential matrix manifold}.

\begin{definition} 
\label{induced_matrix_manifold}
Given a compact differentiable $d$-manifold $Z^d\subseteq \RR^m$ with $m\geq d$, the matrix variety $\mathbb{V}^m[Z^d](n)$ defined by
\[
\mathbb{V}^m[Z^d](n):=
\left\{
(X_1,\ldots,X_m)\in M_n^m \left|
\begin{array}{l}
[X_j,X_k]=\mathbf{0}_n\\
X_j-X_j^\ast=\mathbf{0}_n\\
\|X_j\|\leq 1\\
\Lambda(X_1,\ldots,X_m)\subseteq Z^d 
\end{array}
, 1\leq j,k \leq m\right.
\right\}
\]
will be called the {\em \bf differentiable matrix manifold} induced by $Z^d$.
\end{definition}

\subsubsection{Borel functional calculus and matrix approximation} By applying Borel functional calculus on commutative von Neumann algebras we have that given any real number $0<\delta \leq 1$, any hermitian matrix $X\in M_n$ such that 
$\sigma(X)\subset [-1,1]$, and any partition $-1+\delta/2=x_1<x_2<\cdots<x_m<x_{m+1}=1+\delta/2$ of $[-1-\delta/2,1+\delta/2]$, with $x_{k+1}-x_{k}=\delta$ for each $1\leq k\leq m+1$, the matrix $X$ has a representation $X=\sum_{k=1}^{m+1}\chi_{[x_k,x_{k+1})}(X)X$, where $\chi_{[a,b)}(X)$ denotes the characteristic function of the interval $[a,b)$, for some real numbers $a,b$ such that $a<b$. We also have that for each $1\leq k\leq m$, the matrix $P_k=\chi_{[x_k,x_{k+1})}(X)$ is an orthogonal projection that commutes with $X$ such that $\sigma(P_kX)\subseteq \sigma(X)\cap [x_k,x_{k+1})$. Moreover, the set $\{P_1.\ldots,P_m\}$ is an orthogonal partition of unity.

\begin{definition}[Pseudospectral matrix approximant] Given $\delta>0$ and any matrix $X\in M_n$, we say that the matrix $\tilde{X}\in M_n$ is a $\delta$-Pseudospectral matrix approximant ({\bf $\delta$-PMA}) of $X$ if $\|X-\tilde{X}\|\leq \delta$ and 
$\sigma(\tilde{X})$ is $\delta$-dense in $\sigma(X)$. If in addition each $\tilde{X}X=X\tilde{X}$, we say that $\tilde{X}$ is a commuting $\delta$-PMA ({\bf $\delta$-CPMA}).
\end{definition}

\begin{lemma}
\label{Existence_of_1D_PMA}
Given $\delta>0$ and any hermitian matrix $X\in M_n$ such that $\|X\|\leq 1$, there is a hermitian $\delta$-CPMA $\tilde{X}_\delta$ of $X$. Moreover, there are $N_\delta$
numbers $x_1,\ldots,x_{N_\delta}$
in $[-1,1]$ and an orthogonal partition of unity $\{P_1,\ldots,P_{N_\delta}\}$ such that $\tilde{X}_\delta=\sum_j x_j P_j$.
\end{lemma}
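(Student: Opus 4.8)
The plan is to build the approximant explicitly via a spectral partition of the interval $[-1,1]$, using the Borel functional calculus construction described in the paragraph immediately preceding the statement. First I would fix $\delta > 0$ and choose an integer $N_\delta$ large enough that a uniform mesh $-1 - \delta/2 = t_0 < t_1 < \cdots < t_{N_\delta} = 1 + \delta/2$ with $t_{k} - t_{k-1} < \delta$ covers $[-1-\delta/2, 1+\delta/2]$; concretely one may take $N_\delta = \lceil (2+\delta)/\delta \rceil$ and an equally spaced mesh. Since $X = X^\ast$ with $\|X\| \leq 1$ we have $\sigma(X) \subseteq [-1,1] \subseteq [t_0, t_{N_\delta})$. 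Define the orthogonal projections $P_k := \chi_{[t_{k-1}, t_k)}(X)$ for $1 \leq k \leq N_\delta$ via Borel functional calculus. As recalled just above the lemma, each $P_k$ is an orthogonal projection commuting with $X$, the family $\{P_1, \ldots, P_{N_\delta}\}$ is an orthogonal partition of unity (the characteristic functions of the half-open intervals partition $[t_0, t_{N_\delta})$, which contains $\sigma(X)$), and $\sigma(P_k X) \subseteq \sigma(X) \cap [t_{k-1}, t_k)$.

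Next I would pick sample points $x_k \in [t_{k-1}, t_k) \cap [-1,1]$: if $\sigma(X) \cap [t_{k-1}, t_k) \neq \emptyset$, choose $x_k$ to be an actual eigenvalue of $X$ lying in that subinterval (so that $x_k \in \sigma(X)$); if the intersection is empty, choose $x_k$ arbitrarily in $[t_{k-1}, t_k) \cap [-1,1]$ (this $P_k$ will be $\mathbf{0}_n$ anyway, so the choice is irrelevant). Set $\tilde{X}_\delta := \sum_{k=1}^{N_\delta} x_k P_k$. This matrix is hermitian since each $x_k$ is real and each $P_k$ is a projection, and it commutes with $X$ because each $P_k$ does; moreover $\|\tilde X_\delta\|\le 1$ since the $x_k\in[-1,1]$ and the $P_k$ partition unity, so $\tilde X_\delta$ and $X$ are simultaneously diagonalizable. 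The key estimates are then: first, $\|X - \tilde X_\delta\| = \|\sum_k (X - x_k \mathbf{1}_n) P_k\| = \max_k \|(X - x_k \mathbf{1}_n) P_k\|$, and on the range of $P_k$ the operator $X - x_k \mathbf{1}_n$ has spectrum contained in $(\sigma(X) \cap [t_{k-1}, t_k)) - x_k \subseteq (-\delta, \delta)$, so $\|(X - x_k\mathbf{1}_n)P_k\| \leq \delta$, giving $\|X - \tilde X_\delta\| \leq \delta$. Second, $\sigma(\tilde X_\delta) = \{x_k : P_k \neq \mathbf{0}_n\}$, and for any $\lambda \in \sigma(X)$, $\lambda$ lies in some $[t_{k-1}, t_k)$ with $P_k \neq \mathbf{0}_n$, whence $|\lambda - x_k| < \delta$; thus $\sigma(\tilde X_\delta)$ is $\delta$-dense in $\sigma(X)$. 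Combining these three facts shows $\tilde X_\delta$ is a hermitian $\delta$-CPMA of $X$, and the representation $\tilde X_\delta = \sum_j x_j P_j$ with the $x_j \in [-1,1]$ and $\{P_j\}$ an orthogonal partition of unity is exactly the "moreover" clause (after discarding any zero projections, or keeping them with harmless sample points).

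I do not expect a serious obstacle here: the statement is essentially a packaging of the Borel-functional-calculus observation already stated in the text, and the only mildly delicate points are bookkeeping ones — ensuring the mesh strictly controls the gap by $\delta$ (not just $\le \delta$) so that the operator-norm bound and the $\delta$-density both come out with the right constant, and handling the vacuous subintervals so that the final partition-of-unity decomposition is literally of the claimed form. If one wants $\sigma(\tilde X_\delta) \subseteq \sigma(X)$ as well (not required by the definition of $\delta$-PMA, which only asks for $\delta$-density), choosing the $x_k$ among genuine eigenvalues as above secures it. The commutation $\tilde X_\delta X = X \tilde X_\delta$ is immediate from $P_k X = X P_k$, so the "$\delta$-CPMA" upgrade over "$\delta$-PMA" is free.
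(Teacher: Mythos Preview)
Your proof is correct and follows essentially the same route as the paper's: partition $[-1,1]$ into short subintervals, define the projections $P_k$ as characteristic functions of these intervals via Borel functional calculus, and set $\tilde{X}_\delta = \sum_k x_k P_k$ with one sample point per interval. The only cosmetic differences are that the paper takes its sample points $x_k$ from a fixed uniform grid (the midpoints of its ``support'' intervals, later named the representation grid $R_\delta(X)$) rather than from $\sigma(X)$, and it invokes Proposition~\ref{spectral_variation_inequality_1} for the $\delta$-density instead of checking it directly as you do.
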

\begin{proof}
Let us suppose that $n\geq |\sigma(X)|\geq 2$, as the proof is clear for scalar multiples of $\mathbf{1}_n$. Since $\sigma(X)\subseteq [-1,1]$, we can assume for simplicity that $M_\delta:=1+(\delta)^{-1} \in \mathbb{Z}^+$. Then the finite set 
$\hat{R}_\delta(X):=\{x_k:=-1+2(k-1)\delta\: | \: 1\leq k\leq M_\delta\}$ is $\delta$-dense in $[-1,1]$ with $|\hat{R}_\delta(X)|=M_\delta$. Let us set $S_\delta(X):=\{\check{x}_k:=-1+(2k-3)\delta\: | \: 1\leq k \leq M_{\delta}+1\}$ and 
$P_k:=\chi_{(\check{x}_k,\check{x}_{k+1}]}(X)$ for each $1\leq k\leq M_\delta$. Then there is a set $R_\delta(X):=\{x_j\}\subseteq \hat{R}_\delta(X)$ that is $\delta$-dense in $\sigma(X)$ with $x_j\in (\check{x}_{k(j)},\check{x}_{k(j)+1}]$ for each $x_j\in R_\delta(X)$ and some $\check{x}_{k(j)},\check{x}_{k(j)+1}\in S_\delta(X)$, and the set $P_\delta(X):=\{P_1,\ldots,P_{N_\delta}\}\subseteq M_n\backslash \{\mathbf{0}_n\}$ of $N_\delta=|R_\delta(X)|$ projections is an orthogonal partition of unity such that $[P_j,X]=\mathbf{0}_n$ for each $j$. We will have that for each $1\leq k\leq N_\delta\leq M_\delta$ there is  $x_{j(k)}\in R_\delta(X)$ such that $\|XP_k-x_{j(k)}P_k\|\leq \delta$. By the previous facts and Proposition \ref{spectral_variation_inequality_1} we will have that if we set $\tilde{X}_\delta:=\sum_k x_{j(k)}P_k$ with $x_{j(k)}\in R_\delta(X)$ for each $k$, then $[X,\tilde{X}_\delta]=\mathbf{0}_n$ and $d_H(\sigma(X),\sigma(\tilde{X}_\delta))\leq \|X-\tilde{X}_\delta\|\leq\max_k \|XP_k-x_{j(k)}P_k\|\leq \delta$. If necessary we can renumber the elements of $R_\delta(X)$ according to the elements of $P_\delta(X)$. This completes the proof.
\end{proof}

\begin{figure}[!htb]
\centering
 \includegraphics[scale=0.6]{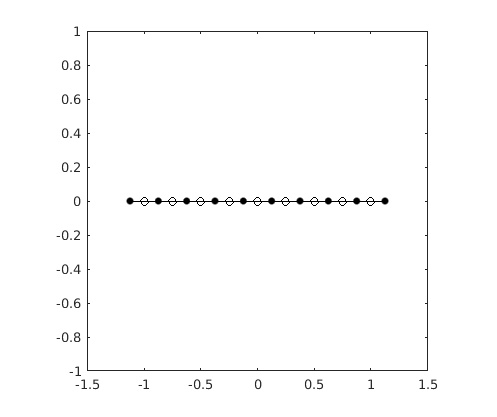}
 \caption{Graphical illustration of a representation grid (consisting of $\circ$ points) for $[0,1]$ together with its support grid (consisting of $\bullet$ points).}
 \label{dual_grids_1d}
 \end{figure}

\begin{figure}[!htb]
\centering
 \includegraphics[scale=0.6]{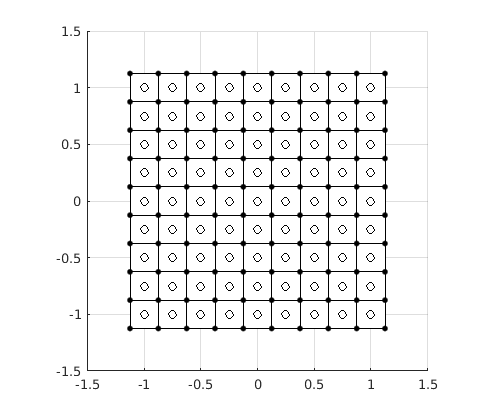}
 \caption{Graphical illustration of a representation grid (consisting of $\circ$ points) for $[0,1]^2$ together with its support grid (consisting of $\bullet$ points).}
 \label{dual_grids_2d}
 \end{figure}
 
 \begin{definition}[CPMA for $m$-tuples] Given $\delta>0$ and any $m$-tuple of pairwise commuting matrices $\mathbf{X}=(X_1,\ldots,X_m)\in M_n^m$, we say that the matrix $\tilde{\mathbf{X}}=(\tilde{X}_1,\ldots,\tilde{X}_m)\in M_n^m$ is a commuting $\delta$-Pseudospectral matrix approximant ({\bf $\delta$-CPMA}) of $\mathbf{X}$ if $\|X_j-\tilde{X}_j\|\leq \delta$, $\sigma(\tilde{X}_j)$ is $\delta$-dense in $\sigma(X_j)$ and  $\tilde{X}_kX_j=X_j\tilde{X}_k$ for each $1\leq j,k\leq m$. If in addition each component $\tilde{X}_j$ of $\tilde{\mathbf{X}}$ is normal, hermitian or unitary, we say that $\tilde{\mathbf{X}}$ is a hermitian, normal or unitary $\delta$-CPMA respectively.
\end{definition}

\begin{lemma}
\label{Existence_of_mD_PMA}
Given $\delta>0$, an integer $m\geq 1$ and any $m$-tuple of pairwise commuting hermitian matrix contractions $\mathbf{X}=(X_1,\ldots,X_m)\in M_n^m$, there is a hermitian $\delta$-CPMA  $\tilde{\mathbf{X}}_\delta=(\tilde{X}_{\delta,1},\ldots,\tilde{X}_{\delta,m})\in M_n^m$ of $\mathbf{X}$. Moreover, for each $1\leq k\leq m$, there are $N_\delta$ (not necessarily distinct) numbers $x_{k,1},\ldots,x_{k,N_\delta}$ counted with multiplicity in $[-1,1]$ and an orthogonal partition of unity $\{P_1,\ldots,P_{N_\delta}\}$ such that $\tilde{X}_{\delta,k}=\sum_j x_{k,j} P_j$.
\end{lemma}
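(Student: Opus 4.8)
The plan is to reduce the $m$-tuple case to a single hermitian matrix by encoding the joint spectral decomposition of $\mathbf{X}=(X_1,\ldots,X_m)$ into one hermitian operator whose spectral projections are precisely the joint spectral projectors of $\mathbf{X}$, apply Lemma~\ref{Existence_of_1D_PMA} to that operator, and then push the resulting orthogonal partition of unity back through all $m$ coordinates simultaneously. Concretely, since the $X_j$ pairwise commute and are hermitian, the joint spectrum $\Lambda(X_1,\ldots,X_m)$ consists of finitely many points in $[-1,1]^m$, and by the Borel functional calculus on the commutative von Neumann algebra they generate there is a finite orthogonal partition of unity $\{Q_1,\ldots,Q_L\}$, with each $Q_\ell$ a joint spectral projector, such that $X_j=\sum_\ell \lambda_{j,\ell} Q_\ell$ for all $j$, where $(\lambda_{1,\ell},\ldots,\lambda_{m,\ell})$ runs over the joint eigenvalues.

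First I would fix a grid on $[-1,1]$ of mesh $\delta$ in each coordinate, exactly as in the proof of Lemma~\ref{Existence_of_1D_PMA}: choose the representation grid $\hat R_\delta=\{-1+2(k-1)\delta\}$ and its support grid $S_\delta$, and for each joint eigenvalue $(\lambda_{1,\ell},\ldots,\lambda_{m,\ell})$ pick, coordinate by coordinate, the nearest grid point $x_{k,\ell}\in\hat R_\delta$ to $\lambda_{k,\ell}$, so that $|\lambda_{k,\ell}-x_{k,\ell}|\le\delta$ for every $k$ and $\ell$. Then set $\tilde X_{\delta,k}:=\sum_\ell x_{k,\ell} Q_\ell$ for each $1\le k\le m$. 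Because the $Q_\ell$ are the common spectral projectors, each $\tilde X_{\delta,k}$ is hermitian, commutes with every $X_j$ and with every $\tilde X_{\delta,j'}$, and $\|X_k-\tilde X_{\delta,k}\|=\|\sum_\ell(\lambda_{k,\ell}-x_{k,\ell})Q_\ell\|=\max_\ell|\lambda_{k,\ell}-x_{k,\ell}|\le\delta$. The inclusion $\sigma(\tilde X_{\delta,k})\subseteq\hat R_\delta$ together with $\|X_k-\tilde X_{\delta,k}\|\le\delta$ and Proposition~\ref{spectral_variation_inequality_1} gives $d_H(\sigma(X_k),\sigma(\tilde X_{\delta,k}))\le\delta$, hence $\sigma(\tilde X_{\delta,k})$ is $\delta$-dense in $\sigma(X_k)$; so $\tilde{\mathbf{X}}_\delta$ is a hermitian $\delta$-CPMA of $\mathbf{X}$.

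For the \emph{Moreover} clause, the partition $\{Q_1,\ldots,Q_L\}$ need not have the uniform cardinality $N_\delta$ asserted in the statement, since distinct joint eigenvalues may round to the same grid point; I would remedy this by refining. One clean way: form the single hermitian contraction $Y:=\sum_{k=1}^m 4^{-k}\, \tilde X_{\delta,k}$ (rescaled into $[-1,1]$ if needed), whose spectral projections separate the tuples $(x_{1,\ell},\ldots,x_{m,\ell})$, apply Lemma~\ref{Existence_of_1D_PMA} machinery — or rather just apply the Borel-calculus step of that proof — to produce from a common grid of mesh comparable to $\delta$ an orthogonal partition of unity $\{P_1,\ldots,P_{N_\delta}\}$ of a fixed size $N_\delta$ (the number of grid cells, depending only on $\delta$ and $m$, not on $n$) which refines $\{Q_\ell\}$, allowing possibly zero or repeated projectors so that the count is exactly $N_\delta$; then each $\tilde X_{\delta,k}=\sum_j x_{k,j}P_j$ with $x_{k,j}\in[-1,1]$ counted with multiplicity, as required.

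The main obstacle is the bookkeeping in this last refinement step: ensuring the partition $\{P_j\}$ can be taken of a size $N_\delta$ that is \emph{independent of $n$} while still being subordinate to the joint spectral decomposition and still giving each $\tilde X_{\delta,k}$ as a grid-valued combination. This is exactly the subtlety already handled for $m=1$ in Lemma~\ref{Existence_of_1D_PMA} via the representation/support grid pair, and the $m$-tuple case amounts to taking the product grid on $[-1,1]^m$ (with $N_\delta$ cells) and assigning to the cell containing $(\lambda_{1,\ell},\ldots,\lambda_{m,\ell})$ the projector $Q_\ell$ (adding zero projectors for empty cells); commutativity of the $X_j$ is what guarantees these cell-projectors are well defined and mutually orthogonal. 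Everything else is the routine verification carried out above.
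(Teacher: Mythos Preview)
Your argument is correct, and your final paragraph (product grid on $[-1,1]^m$, assigning to each occupied cell the sum of the $Q_\ell$ landing in it) is exactly how the paper proceeds. The paper, however, goes there directly rather than via the rank-one joint projectors $Q_\ell$: it applies the one-dimensional construction of Lemma~\ref{Existence_of_1D_PMA} to each $X_k$ separately to obtain the interval projectors $P_{k,j_k}=\chi_{(\check{x}_{j_k},\check{x}_{j_k+1}]}(X_k)$, and then sets
\[
P_\delta(\mathbf{X})=\{P_{1,j_1}P_{2,j_2}\cdots P_{m,j_m}\},
\]
an orthogonal partition of unity of size $N_\delta=|P_\delta(\mathbf{X})|$ that automatically commutes with every $X_k$ and already carries the grid values $x_{k,j}$ from the one-dimensional step. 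This single move handles both the $\delta$-CPMA estimate and the ``Moreover'' clause at once, so the detours in your proposal --- first passing to the finest joint decomposition $\{Q_\ell\}$ and then re-coarsening, and the suggested encoding $Y=\sum_k 4^{-k}\tilde X_{\delta,k}$ --- are unnecessary. The encoding idea in particular does not buy anything: the spectral projections of $Y$ just recover (a coarsening of) the $Q_\ell$, not a partition of fixed size, so you would still need the product-grid step you describe afterwards. In short, your route is sound but longer; the paper's product-of-1D-partitions construction is the same destination reached in one step.
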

\begin{proof}
Since the joint spectrum $\sigma(\mathbf{X})$ of $\mathbf{X}$ is a subset of $[-1,1]^m$ we can apply a similar procedure to the one implemented in the proof of Lemma \ref{Existence_of_1D_PMA} to find for each $X_j$ a representation grid ${R}_{\delta}(X_j)$ together with a support grid $S_{\delta}(X_j)$, an associated 
$\delta$-projective decomposition $P_\delta(X_j)$ and an integer $N_{\delta,j}:=|P_\delta(X_j)|=|{R}_{\delta}(X_j)|$. We will have that 
the set $P_\delta(\mathbf{X}):=\{P_1,\ldots,P_{N_\delta}\}=\{P_{1,j_1}P_{2,j_2}\cdots P_{m,j_m}\:|\: P_{k,j_k}\in P_\delta(X_k), \: 1\leq j_k\leq N_{\delta,k}, \: 
1\leq k \leq m\}$ is an orthogonal partition of unity such that $PQ=QP$ for each $P\in P_\delta(\mathbf{X})$ and 
each $Q\in P_\delta(X_k)$, for any $1\leq k\leq m$. By setting $N_{\delta}:=|P_\delta(\mathbf{X})|$, we will have that $N_\delta\geq N_{\delta,k}$ for each $k$. We will also have that $P_\delta(X_k)\subseteq \mathrm{span}\: P_\delta(\mathbf{X})$ for each $k$. For each $X_k$ and each $P_j\in P_\delta({\mathbf{X}})$ there is $x_{k,j}\in R_\delta(X_j)\cup \{0\}$ such that $\|X_kP_j-x_{k,j}P_j\|\leq \delta$. Moreover,  
$[X_j,P_k]=\mathbf{0}_n$ for each $1\leq j\leq m$ and each $1\leq k \leq N_\delta$. By a similar argument to the one implemented in the proof of Lemma \ref{Existence_of_1D_PMA} it can be seen that the matrix $\tilde{X}_{\delta,k}:=\sum_{j}x_{k,j}P_j$ is a 
commuting hermitian $\delta$-PMA of $X_j$ for each $j$. This completes the proof.
\end{proof}

\begin{remark}
Given a matrix $m$-tuple $\mathbf{X}\in M_n^m$, we will refer to the finite sets $R_\delta(\mathbf{X})=\bigcup_{j} R_\delta(X_j)\cup \{0\}$  and $S_\delta(\mathbf{X})=\bigcup_{j} S_\delta(X_j)$ described in the proof of Lemmas \ref{Existence_of_1D_PMA} and \ref{Existence_of_mD_PMA} as {\em \bf representation} and {\em \bf support} grids respectively. The finite set $P_\delta(\mathbf{X})$ described in the previous lemma will be called a {\bf $\delta$-projective} decomposition of $\mathbf{X}$. Two graphical examples of representation and support grids are presented in Fogires \ref{dual_grids_1d} and \ref{dual_grids_2d}.
\end{remark}

We can now obtain the following approximation result for $C([-1,1]^m)$.

\begin{lemma}
\label{Cubes_are_AEU}
Given $m\in\mathbb{Z}^+$, we have that for any $\delta>0$ the local approximation problem in $C([-1,1]^m)$ described by the diagram \eqref{first_approximation_problem} is solvable.
\end{lemma}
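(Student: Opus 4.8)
The plan is to read the solution off Lemma \ref{Existence_of_mD_PMA}; no estimate beyond that lemma is needed. Given a $\ast$-representation $\pi:C(\mathbb{I}^{m})\to C^\ast(H_1,\ldots,H_{m})$ with $\pi(h_j)=H_j$, the defining relations of $C(\mathbb{I}^m)=C([-1,1]^m)$ force $\mathbf{H}:=(H_1,\ldots,H_m)$ to be a tuple of pairwise commuting hermitian contractions in $M_n$. First I would apply Lemma \ref{Existence_of_mD_PMA} to $\mathbf{H}$ to obtain a hermitian $\delta$-CPMA $\tilde{\mathbf{X}}_\delta=(\tilde{X}_{\delta,1},\ldots,\tilde{X}_{\delta,m})$ of $\mathbf{H}$, an integer $N_\delta$ (which by the proof of that lemma may be taken no larger than $(1+\delta^{-1})^m$, in particular independent of $n$), numbers $x_{k,j}\in[-1,1]$, and an orthogonal partition of unity $\{P_1,\ldots,P_{N_\delta}\}$ with $\tilde{X}_{\delta,k}=\sum_j x_{k,j}P_j$. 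Since the $P_j$ arise as products of spectral projections of the $H_k$, each $P_j$ lies in the commutative algebra $C^\ast(H_1,\ldots,H_m)$; in particular every $P_j$ commutes with every $H_k$, so the mutually commuting matrices $\tilde{X}_{\delta,k}$ also satisfy $\tilde{X}_{\delta,k}H_j=H_j\tilde{X}_{\delta,k}$ for all $j,k$.

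Next I would assemble the two representations demanded by \eqref{first_approximation_problem}. Because $\{P_1,\ldots,P_{N_\delta}\}$ is an orthogonal partition of unity, the universal picture \eqref{spheres_of_projections} of $C(\mathbb{X}_{N_\delta})$ (for this same $N_\delta$) yields a $\ast$-homomorphism $\pi_n:C(\mathbb{X}_{N_\delta})\to M_n$ with $\pi_n(p_j)=P_j$; its range is $\mathrm{span}\{P_1,\ldots,P_{N_\delta}\}\subseteq C^\ast(H_1,\ldots,H_m)\subseteq\pi(C(\mathbb{I}^m))'$, so $\pi_n\in\mathrm{C^\ast Rep}(C(\mathbb{X}_{N_\delta}),\pi(C(\mathbb{I}^m))')$ as required. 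Identifying the $N_\delta$ points of $\mathbb{X}_{N_\delta}$ with the projections $P_j$ so that distinct points are assigned to distinct nonzero $P_j$, the element $\hat{\mathbb{X}}_{N_\delta}=\pi_n(\mathrm{id}_{\mathbb{X}_{N_\delta}})=\sum_j \tilde{x}_j P_j$ has each such $P_j$ as a spectral projection, so $C^\ast(\hat{\mathbb{X}}_{N_\delta})$ contains every $P_j$ and hence every $\tilde{X}_{\delta,k}=\sum_j x_{k,j}P_j$. As the $\tilde{X}_{\delta,k}$ are pairwise commuting hermitian contractions, the universal property of $C(\mathbb{I}^m)=C([-1,1]^m)$ then produces a $\ast$-representation $\pi_\delta:C(\mathbb{I}^m)\to C^\ast(\hat{\mathbb{X}}_{N_\delta})$ with $\pi_\delta(h_k)=\tilde{X}_{\delta,k}$.

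It then remains to check the two relations \eqref{first_approximation_problem_solution}, which I would verify inside the commutative algebra $C^\ast(\hat{\mathbb{X}}_{N_\delta},H_1,\ldots,H_m)$: with $(\imath_2\circ\pi)(h_j)=H_j$ and $(\imath_1\circ\pi_\delta)(h_k)=\tilde{X}_{\delta,k}$, the commutation relation is exactly the identity $H_j\tilde{X}_{\delta,k}=\tilde{X}_{\delta,k}H_j$ observed above, and the norm bound $\|(\imath_2\circ\pi)(h_j)-(\imath_1\circ\pi_\delta)(h_j)\|=\|H_j-\tilde{X}_{\delta,j}\|\le\delta$ is precisely the $\delta$-CPMA estimate. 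This exhibits a solution of \eqref{first_approximation_problem}. I do not expect a real obstacle: the analytic content is entirely carried by Lemma \ref{Existence_of_mD_PMA}, and the only points requiring care are bookkeeping ones — verifying that $\pi_n$ lands in $\pi(C(\mathbb{I}^m))'$ (true because the approximating projections already live in the commutative $C^\ast(H_1,\ldots,H_m)$), and labelling $\mathbb{X}_{N_\delta}$ so that $C^\ast(\hat{\mathbb{X}}_{N_\delta})$ is rich enough to support $\pi_\delta$.
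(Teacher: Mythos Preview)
Your argument is correct and follows essentially the same route as the paper: apply Lemma~\ref{Existence_of_mD_PMA} to the tuple $\mathbf{H}=(\pi(h_1),\ldots,\pi(h_m))$ to produce the $\delta$-CPMA and the partition of unity $\{P_1,\ldots,P_{N_\delta}\}$, then invoke the universality of $C(\mathbb{X}_{N_\delta})$ and of $C([-1,1]^m)$ to build $\pi_n$ and $\pi_\delta$, and read off \eqref{first_approximation_problem_solution}. Your write-up is in fact a bit more explicit than the paper's on the bookkeeping points (why the $P_j$ lie in $C^\ast(H_1,\ldots,H_m)\subseteq\pi(C(\mathbb{I}^m))'$, why $C^\ast(\hat{\mathbb{X}}_{N_\delta})$ contains each $P_j$, and the $n$-independent bound $N_\delta\le(1+\delta^{-1})^m$), but the mathematical content is the same.
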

\begin{proof}
Let $m\in \mathbb{Z}^+$. For any $n\in\mathbb{Z}^+$ and any $\ast$-homomorphism $\rho_n$ in the set $\mathrm{C^\ast Rep}(C([-1,1]^m),M_n)$, we can apply Lemma \ref{Existence_of_mD_PMA} to the $m$-tuple $\mathbf{H}=(\pi_n(h_j))\in \mathbb{I}^m(n)$ to obtain a $\delta$-{\bf CPMA} $\tilde{\mathbf{H}}=(\tilde{H}_j)\in \mathbb{I}^m(n)$ such that $C^\ast(\tilde{\mathbf{H}})\subseteq C^\ast(P_\delta(\mathbf{H}))$. Let us set $N_\delta=|P_\delta(\mathbf{H})|$, by universality of $C(\mathbb{X}_{N_\delta})$ we will have that there is a $\ast$-representation $\rho_n:C(\mathbb{X}_{N_\delta})\to C^\ast(P_\delta(\mathbf{H})) \supseteq C^\ast(\tilde{\mathbf{H}})$, and by universality of $C([-1,1]^m)$ we will have that there is a $\ast$-representation $\pi_\delta:C([-1,1]^m)\to C^\ast(\tilde{\mathbf{H}})\subseteq C^\ast(P_\delta(\mathbf{H}))$. It can be seen that $\pi_\delta$ satisfies the conditions \eqref{first_approximation_problem_solution}. This completes the proof.
\end{proof}

\begin{theorem}
\label{Every_CX_is_AEU}
Given any compact Hasudorff space $X$, we have that the universal $C^\ast$-algebra $C(X)$ is {\bf AEU}.
\end{theorem}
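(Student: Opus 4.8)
The plan is to reduce the statement for an arbitrary compact Hausdorff space $X$ to the already-established case of the $m$-cube $[-1,1]^m$, which is handled by Lemma \ref{Cubes_are_AEU}. The key observation is that the property \textbf{AEU} is a statement quantified over \emph{finite} sets of normal contractions in $C(X)$, so it suffices to control the joint behavior of finitely many generators at a time. Given such a finite set $\{z_1,\ldots,z_m\}\subset C(X)$ of normal contractions, I would first pass to the self-adjoint picture: writing $z_j = Re(z_j) + \mathbf{i}\, Im(z_j)$, the $2m$ self-adjoint contractions $h_1 := Re(z_1),\ldots,h_m := Re(z_m),h_{m+1} := Im(z_1),\ldots,h_{2m} := Im(z_m)$ generate a commutative $C^\ast$-subalgebra $C^\ast(h_1,\ldots,h_{2m})\subseteq C(X)$, which by Gelfand duality is $C(Y)$ for a compact subset $Y\subseteq [-1,1]^{2m}$ (the joint spectrum of the $h_k$). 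The inclusion $Y\hookrightarrow [-1,1]^{2m}$ dualizes to a surjective restriction $\ast$-homomorphism $C([-1,1]^{2m})\twoheadrightarrow C(Y)$, and the coordinate functions of $[-1,1]^{2m}$ restrict to the $h_k$.

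Next I would transport an arbitrary representation $\rho_n\in \mathrm{C^\ast Rep}(C(X),M_n)$ through this diagram. The images $H_k := \rho_n(h_k)$ form a $2m$-tuple of pairwise commuting hermitian contractions in $M_n$, i.e.\ an element of $\mathbb{I}^{2m}(n)$, and by universality of $C([-1,1]^{2m})$ there is a $\ast$-representation $\pi_n : C([-1,1]^{2m})\to C^\ast(H_1,\ldots,H_{2m})$ with $\pi_n(h_k)=H_k$. Now apply Lemma \ref{Cubes_are_AEU} (equivalently Lemma \ref{Existence_of_mD_PMA}) with the given $\delta>0$ to the tuple $(H_1,\ldots,H_{2m})$: this produces an integer $N_\delta := |P_\delta(H_1,\ldots,H_{2m})|$, an \textbf{EUMR} $\rho_n : C(\mathbb{X}_{N_\delta})\to C^\ast(P_\delta(\mathbf{H}))\subseteq C^\ast(H_1,\ldots,H_{2m})' $ arising from the $\delta$-projective decomposition, and a $\ast$-representation $\pi_\delta : C([-1,1]^{2m})\to C^\ast(\tilde{\mathbf{H}})$ with $\|\pi_n(h_k)-\pi_\delta(h_k)\|=\|H_k-\tilde H_k\|\leq \delta$ for each $1\leq k\leq 2m$. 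Precomposing $\pi_\delta$ with the restriction map $C([-1,1]^{2m})\twoheadrightarrow C(Y)$ inverts on generators, so $\pi_\delta$ descends to a $\ast$-homomorphism $\rho_{\delta,n} : C(Y)\to C^\ast(\tilde{\mathbf H})$; composing with the quotient $C(X)\twoheadrightarrow C(Y)$ (dual to $Y\hookrightarrow X$ on the relevant spectra, or more precisely using that $z_j\in C^\ast(h_1,\ldots,h_{2m})$) yields a $\ast$-homomorphism $\rho_{\delta,n} : C(X)\to C^\ast(\rho_{N_\delta,n}(\mathrm{id}_{\mathbb{X}_{N_\delta}}))$ with $\|\rho_n(z_j)-\rho_{\delta,n}(z_j)\|\leq \sqrt{2}\,\delta$ for each $1\leq j\leq m$, using $z_j=h_j+\mathbf{i}h_{m+j}$ and the triangle inequality (and rescaling $\delta$ at the outset absorbs the $\sqrt2$). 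The integer $N_\delta$ depends only on $\delta$ and $2m$, not on $n$, which is exactly what the definition of \textbf{AEU} requires.

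The main obstacle is bookkeeping rather than conceptual: one must check carefully that every $z_j$ genuinely lies in $C^\ast(h_1,\ldots,h_{2m})$ (immediate from $z_j = h_j + \mathbf{i}h_{m+j}$) and, more delicately, that $\rho_{\delta,n}$ really factors through $C(X)$ and not merely through $C(Y)$ — this requires noting that the surjection $C(X)\to C(Y)$ splits on the generating set $\{z_1,\ldots,z_m\}$ in the sense that the $z_j$ have well-defined preimages, which is automatic, and that one only needs a $\ast$-homomorphism out of $C(X)$ rather than an isomorphism. A second point requiring care is that the \textbf{EUMR} $\rho_{N_\delta,n}$ must land in the commutant $\rho_n(C(X))'$; this follows because $P_\delta(\mathbf H)$ is built from spectral projections of the $H_k$ via Borel functional calculus, hence lies in the bicommutant $C^\ast(H_1,\ldots,H_{2m})''\subseteq \rho_n(C^\ast(h_1,\ldots,h_{2m}))''$, and every element of $\rho_n(C(X))$ commutes with this von Neumann algebra because $C(X)$ is commutative and $C^\ast(h_1,\ldots,h_{2m})\subseteq C(X)$. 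Once these containments are spelled out, the result follows by assembling the diagram \eqref{first_approximation_problem} with $m$ replaced by $2m$ and invoking Lemma \ref{Cubes_are_AEU}.
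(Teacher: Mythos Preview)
Your approach is exactly the paper's: pass to the $2m$ self-adjoint parts $Re(z_j),Im(z_j)$, use universality of $C([-1,1]^{2m})$ to obtain a representation $\pi_n$ of the cube algebra, and apply Lemma~\ref{Cubes_are_AEU} (the paper halves $\delta$ where you absorb a constant at the end).

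One bookkeeping slip is worth flagging. You speak of ``the quotient $C(X)\twoheadrightarrow C(Y)$'' and of $\pi_\delta$ ``descending'' to $C(Y)$, but Gelfand duality goes the other way: the continuous surjection $X\to Y$, $x\mapsto(h_1(x),\ldots,h_{2m}(x))$, dualizes to an \emph{inclusion} $C(Y)\hookrightarrow C(X)$, not a quotient, and $\pi_\delta$ need not factor through $C(Y)$ because the joint spectrum of $\tilde{\mathbf H}$ sits in the representation grid rather than in $Y$. Neither point is fatal. Since $C^\ast(P_\delta(\mathbf H))\cong\mathbb C^{N_\delta}$, a unital $\ast$-homomorphism $\rho_{\delta,n}:C(X)\to C^\ast(P_\delta(\mathbf H))$ amounts to a choice of $N_\delta$ points of $X$; one obtains suitable points by lifting nearby joint eigenvalues of $\mathbf H$ (which do lie in $Y$) along the surjection $X\twoheadrightarrow Y$, and the required estimate on $\|\rho_n(z_j)-\rho_{\delta,n}(z_j)\|$ follows. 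The paper's own proof does not spell out this step at all, so modulo this correction your argument is already at least as complete.
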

\begin{proof}
Given any $m$ contractions $z_{1},\ldots,z_m\in C(X)$, let us consider the $2m$ selfajoint contractions 
$Re(z_1),Im(z_1),\ldots,Re(z_m),Im(z_m)$. By universality of the $C^\ast$-algebra $C([-1,1]^{2m})$ we will have that 
any $\ast$-representation of the form 
\begin{equation*}
\hat{\pi}_n:C^\ast(Re(z_1),Im(z_1),\ldots,Re(z_m),Im(z_m))\to M_n,
\end{equation*}
induces a $\ast$-representation of the form 
\begin{equation}
\pi_n:C([-1,1]^{2m})\to C^\ast(Re(z_1),Im(z_1),\ldots,Re(z_m),Im(z_m))\to M_n,
\label{matrix_model_1}
\end{equation}
determined by the equations $\pi_n(h_{2j-1})=\hat{\pi}_n(Re(z_j))$ and $\pi_n(h_{2j})=\hat{\pi}_n(Im(z_j))$, $1\leq j\leq m$. We can apply Lemma \ref{Cubes_are_AEU} to $\pi_n$ replacing $\delta$ by $\delta/2$ if necessary. This completes the proof.
\end{proof}

\begin{lemma}
\label{Existence_of_mD_PMA_manifolds}
Given $d,m\in\mathbb{Z}^+$ with $d\leq m$, a compact differentiable $d$-manifold $Z^d\subseteq [-1,1]^m\subseteq \RR^m$, a number $\delta>0$, and any $m$-tuple of pairwise commuting hermitian matrix contractions $\mathbf{Z}=(Z_1,\ldots,Z_m)\in \mathbb{V}^m[Z^d](n)$, 
there is a $\delta$-CPMA $\tilde{\mathbf{Z}}_\delta=(\tilde{Z}_{\delta,1},\ldots,\tilde{Z}_{\delta,m})\in \mathbb{V}^m[Z^d](n)$ of $\mathbf{Z}$. Moreover, there are $\mathbf{H}=(H_1,\ldots,H_d)$ and  $\tilde{\mathbf{H}}_{\delta}=(H_{\delta,1},\ldots,\tilde{H}_{\delta,d})$ in $\mathbb{I}^m(n)$ such that for each $1\leq k\leq m$ and each $1\leq k'\leq d$, there are $N_\delta$ (not necessarily distinct) numbers $x_{k',1},\ldots,x_{k',N_\delta}$ counted with multiplicity in $\RR$, $N_\delta$ (not necessarily distinct) coordinate charts $\varphi_{k,1},\ldots,\varphi_{k,N_\delta}$ of $Z^d$ counted with multiplicity, and an orthogonal partition of unity $\{P_1,\ldots,P_{N_\delta}\}$ such that 
\begin{equation}
\left\{
\begin{array}{l}
Z_k=\sum_j \varphi_{k,j}(P_j{H}_{1},\ldots,P_j{H}_{d})P_j\\
\tilde{Z}_{\delta,k}=\sum_j\varphi_{k,j}(P_j\tilde{H}_{\delta,1},\ldots,P_j\tilde{H}_{\delta,d})P_j\\
\varphi_{k,j}(P_j\tilde{H}_{\delta,1},\ldots,P_j\tilde{H}_{\delta,d})=\varphi_{k,j}(x_{1,j},\ldots,x_{d,j}) P_j,
\end{array}
\right.
\label{additional_matrix_manifold_conditions}
\end{equation}
for each $1\leq j\leq N_\delta$ and each $\:\: 1\leq k\leq m$.
\end{lemma}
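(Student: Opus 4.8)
The plan is to transport the problem, point by point in the joint spectrum, into local coordinates on $Z^d$ and there invoke Lemma~\ref{Existence_of_mD_PMA}. First I would dispose of the case where $\mathbf{Z}$ is a tuple of scalar multiples of $\mathbf{1}_n$, and otherwise simultaneously diagonalize $\mathbf{Z}$: let $\lambda^{(1)},\ldots,\lambda^{(s)}\in Z^d$ be its joint spectrum and $Q_1,\ldots,Q_s$ the corresponding joint spectral projectors, so that $\{Q_r\}$ is an orthogonal partition of unity commuting with every $Z_k$ and $Z_k=\sum_r \lambda^{(r)}_k Q_r$. For each $r$ pick a coordinate chart $\varphi^{(r)}:U_r\to Z^d\subseteq[-1,1]^m$ whose image contains $\lambda^{(r)}$; since there are only finitely many ($s\leq n$) such charts, after shrinking $U_r$ to a small ball around $t^{(r)}:=(\varphi^{(r)})^{-1}(\lambda^{(r)})$ and an affine change of variables I may assume $U_r\subseteq[-1,1]^d$, so that $\rho_0:=\min_r\mathrm{dist}(t^{(r)},\partial U_r)>0$ and the finitely many component maps $\varphi^{(r)}_k$ share a common modulus of continuity on the relevant compacta.

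Next I would introduce the auxiliary commuting hermitian contractions $H_i:=\sum_r t^{(r)}_i Q_r$ for $1\leq i\leq d$. By Borel functional calculus $Q_rH_i=t^{(r)}_iQ_r$, hence $\varphi^{(r)}_k(Q_rH_1,\ldots,Q_rH_d)=\varphi^{(r)}_k(t^{(r)})Q_r=\lambda^{(r)}_kQ_r$ and therefore $Z_k=\sum_r\varphi^{(r)}_k(Q_rH_1,\ldots,Q_rH_d)Q_r$. Apply Lemma~\ref{Existence_of_mD_PMA} to $\mathbf{H}=(H_1,\ldots,H_d)$ with a parameter $\delta'>0$ still to be fixed, obtaining a hermitian $\delta'$-CPMA $\tilde{\mathbf{H}}_{\delta'}=(\tilde H_{\delta',1},\ldots,\tilde H_{\delta',d})$, an orthogonal partition of unity $\{P^\circ_j\}$ commuting with $\mathbf{H}$, and scalars $x_{i,j}\in[-1,1]$ with $\tilde H_{\delta',i}=\sum_j x_{i,j}P^\circ_j$ and $\|H_iP^\circ_j-x_{i,j}P^\circ_j\|\leq\delta'$. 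Since each $Q_r$ lies in $C^\ast(\mathbf{Z})\subseteq C^\ast(\mathbf{H})$, the $P^\circ_j$ commute with the $Q_r$, so I would pass to the common refinement $\{P_1,\ldots,P_{N_\delta}\}$ of $\{P^\circ_j\}$ and $\{Q_r\}$ (discarding zeros); to each $P_l$ I attach the unique $r(l)$ with $P_l\leq Q_{r(l)}$, the chart $\varphi_{k,l}:=\varphi^{(r(l))}_k$, and the value $x_{i,l}:=x_{i,j(l)}$ where $P_l\leq P^\circ_{j(l)}$. Because $P_l\leq Q_{r(l)}$ forces $H_iP_l=t^{(r(l))}_iP_l$, the bound above yields $|t^{(r(l))}_i-x_{i,l}|\leq\delta'$, so $(x_{1,l},\ldots,x_{d,l})$ lies within $\sqrt d\,\delta'$ of $t^{(r(l))}$; choosing $\delta'<\rho_0/\sqrt d$ keeps this point inside $U_{r(l)}$. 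I expect exactly this bookkeeping — keeping the perturbed coordinate vectors inside their chart domains so that the joint spectrum lands back on $Z^d$ — to be the main obstacle, and it is what forces the common-refinement step and a uniform smallness choice of $\delta'$ via compactness. Once it is arranged, the composition $\varphi_{k,l}(P_l\tilde H_{\delta',1},\ldots,P_l\tilde H_{\delta',d})$ is meaningful and equals $\varphi_{k,l}(x_{1,l},\ldots,x_{d,l})P_l$, and likewise $Z_k=\sum_l\varphi_{k,l}(P_lH_1,\ldots,P_lH_d)P_l$, which are the first and third identities of \eqref{additional_matrix_manifold_conditions}.

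Finally I would set $\tilde Z_{\delta,k}:=\sum_l\varphi_{k,l}(x_{1,l},\ldots,x_{d,l})P_l$, which is the second identity. Each $\tilde Z_{\delta,k}$ is hermitian (real coefficients, $P_l^\ast=P_l$), a contraction since its spectral values lie in $\varphi^{(r(l))}(U_{r(l)})\subseteq Z^d\subseteq[-1,1]^m$, and the $\tilde Z_{\delta,k}$ together with the $Z_j$ are all diagonal with respect to the orthogonal family $\{P_l\}$, hence pairwise commute and commute with $\mathbf{Z}$; the joint spectrum of $\tilde{\mathbf{Z}}_\delta$ is $\{\varphi^{(r(l))}(x_{1,l},\ldots,x_{d,l})\}_l\subseteq Z^d$, so $\tilde{\mathbf{Z}}_\delta\in\mathbb{V}^m[Z^d](n)$ and \eqref{additional_matrix_manifold_conditions} holds with $\tilde{\mathbf{H}}_\delta:=\tilde{\mathbf{H}}_{\delta'}$. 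For the estimates, on $P_l$ the matrix $Z_k$ acts by $\varphi^{(r(l))}_k(t^{(r(l))})$ and $\tilde Z_{\delta,k}$ by $\varphi^{(r(l))}_k(x_{1,l},\ldots,x_{d,l})$, so the common modulus of continuity of the finitely many $\varphi^{(r)}_k$ lets a final shrinking of $\delta'$ force $\|Z_k-\tilde Z_{\delta,k}\|=\max_l|\varphi^{(r(l))}_k(t^{(r(l))})-\varphi^{(r(l))}_k(x_{\cdot,l})|\leq\delta$; and since every nonzero $Q_r$ contains some $P_l$, each $\lambda^{(r)}_k=\varphi^{(r)}_k(t^{(r)})$ is within $\delta$ of $\sigma(\tilde Z_{\delta,k})$, which is the required $\delta$-density. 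One then fixes $\delta'$ as the minimum of the finitely many thresholds produced above. The remaining points — hermiticity, the contraction bound, commutativity from simultaneous diagonality, and the Lemma~\ref{cont_funct_calc_tool}-type continuity of functional calculus — are routine.
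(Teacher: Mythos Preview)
Your argument is correct and follows the same route as the paper: pass to chart coordinates via the joint spectral resolution to build $\mathbf{H}\in\mathbb{I}^d(n)$, apply Lemma~\ref{Existence_of_mD_PMA} there, and push forward through the charts. One small correction: your justification that the $P^\circ_j$ commute with the $Q_r$ via the inclusion $C^\ast(\mathbf{Z})\subseteq C^\ast(\mathbf{H})$ has the inclusion backwards --- distinct joint eigenvalues lying in different charts can pull back to identical $t$-coordinates, so $\mathbf{H}$ need not separate the $Q_r$; what does hold is $C^\ast(\mathbf{H})\subseteq C^\ast(Q_1,\ldots,Q_s)=C^\ast(\mathbf{Z})$, since each $H_i$ is a linear combination of the $Q_r$, and commutativity of $C^\ast(\mathbf{Z})$ then gives the needed commutation. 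Otherwise your explicit common-refinement step and your tracking of chart domains are, if anything, more careful than the paper, which fixes a finite atlas and a uniform modulus $\nu_\delta$ for $Z^d$ in advance (depending only on $Z^d$ and $\delta$) rather than selecting charts per joint eigenvalue, but does not isolate the refinement issue you handle.
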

\begin{proof}
Let $Z^d$ be a compact differentiable $d$-manifold given as in the statement of Lemma \ref{Existence_of_mD_PMA_manifolds}. Let us consider a $m$-tuple of pairwise commuting hermitian matrix contractions $\mathbf{Z}=(Z_1,\ldots,Z_m)\in \mathbb{V}^m[Z^d](n)$. Since $Z^d$ is a compact differentiable $d$-manifold, for any $\delta>0$, there is a (finite) family of differentiable charts 
$\mathcal{C}(Z^d):=\{\varphi_{k,l}\}$ of $Z^d$ together with a number $\nu_\delta>0$ such that 
\begin{equation}
(\varphi_{1,l}(x),\ldots,\varphi_{m,l}(x))\in B^\circ((\varphi_{1,l}(x),\ldots,\varphi_{m,l}(x));\delta)\cap Z^d,
\label{generic_matrix_neighborhoods}
\end{equation}
for each $x\in B^\circ(x,\nu_\delta)\cap dom(\varphi_{k,l})\subset \mathbb{R}^d$ and for each $k,l$, where $B^\circ(x,r)$ denotes the open ball of radius $r$ in the Euclidean metric in $\RR^m$. For each $1\leq k\leq m$, let us consider the spectral resolutions 
\begin{equation}
Z_k=\sum_{i=1}^n z_{k,i}Q_{i},
\label{manifold_spectral_resolution}
\end{equation}
where each $Q_i\in M_n$ is a joint rank one spectral projector of $\mathbf{Z}$, by Definition \ref{induced_matrix_manifold} and by \eqref{generic_matrix_neighborhoods} we will have that there is a hermitian matrix contraction 
\begin{equation}
H_k=\sum_{i=1}^n\varphi_{k,i}^{-1}(z_{1,i},\ldots,z_{m,i})Q_{i}
\label{manifold_inverse_spectral_resolution}
\end{equation}
for each $1\leq k\leq d$, where each 
$\varphi_{j,i}\in \mathcal{C}(Z^d)$ and where
\begin{equation}
\varphi_{j,i}(\varphi_{1,i}^{-1}(z_{1,i},\ldots,z_{m,i}),\ldots,\varphi_{d,i}^{-1}(z_{1,i},\ldots,z_{m,i}))=z_{j,i}
\label{local_matrix_manifold_structure}
\end{equation}
for each $1\leq i\leq n$ and each $1\leq j\leq m$. By the previously described facts and by \eqref{manifold_inverse_spectral_resolution} we will have 
that $\mathbf{H}:=(H_1,\dots,H_d)$ can be chosen from $\mathbb{I}^d(n)$, and that we can apply Lemma \ref{Existence_of_mD_PMA} to find a $\nu_\delta$-PMA 
$\tilde{\mathbf{H}}_{\delta}:=(\tilde{H}_{\delta,1},\ldots,\tilde{H}_{\delta,d})$ of 
$\mathbf{H}$ in $\mathbb{I}^m(n)$, where each $\tilde{H}_{\delta,k}$ can be decomposed in the form $\tilde{H}_{\delta,k}=\sum_j x_{k,j}P_j$ described in Lemma \ref{Existence_of_mD_PMA} for some $\nu_\delta$-projective decomposition $\{P_j\}=P_\delta(\mathbf{H})$. Because of the smooth structure of $Z^d$ and by 
\eqref{generic_matrix_neighborhoods}, \eqref{manifold_spectral_resolution}, \eqref{manifold_inverse_spectral_resolution} and \eqref{local_matrix_manifold_structure} we will have that for each $1\leq k\leq m$, there are $N_\delta$ (not necessarily distinct) differentiable charts $\varphi_{k,1},\ldots,\varphi_{k,N_\delta}$ counted with multiplicity in $\mathcal{C}(Z^d)$ such that for each $1\leq k\leq m$ the following relations hold.
\begin{equation}
\left\{
\begin{array}{l}
Z_k=\sum_{j=1}^{N_\delta} \varphi_{k,j}(P_j H_1,\ldots,P_j H_d)P_j,\\
\max_{1\leq j\leq N_{\delta}} \|\varphi_{k,j}(P_jH_1,\ldots,P_jH_d)P_j-\varphi_{k,j}(x_{1,j},\ldots,x_{d,j}) P_j\|\leq\delta
\end{array}
\right.
\label{local_manifold_approximation_inequality}
\end{equation}
If for each $k$ we set
\begin{equation}
Z_{k,\delta}:=\sum_{j=1}^{N_\delta} \varphi_{k,j}(x_{1,j},\ldots,x_{d,j}) P_j=\sum_{j=1}^{N_\delta}\varphi_{k,j}(P_j\tilde{H}_{\delta,1},\ldots,P_j\tilde{H}_{\delta,d}) P_j,
\label{multivariable_functional_calculus_identity}
\end{equation}
then the $m$-tuple $\tilde{\mathbf{Z}}_\delta:=(Z_{1,\delta},\ldots,Z_{m,\delta})$ belongs to $\mathbb{V}^m[Z^d](n)$ and 
the commutation relations
\begin{equation}
[Z_{k},Z_{l,\delta}]=\mathbf{0}_n,
\label{matrix_manifold_commutation_relations}
\end{equation}
hold for each $1\leq k,l\leq m$. By \eqref{local_matrix_manifold_structure}, \eqref{local_manifold_approximation_inequality} and \eqref{multivariable_functional_calculus_identity} we have that the relations \eqref{additional_matrix_manifold_conditions} hold for each $1\leq k\leq m$. By \eqref{generic_matrix_neighborhoods} and \eqref{multivariable_functional_calculus_identity} we will have that the following estimates hold for each $1\leq k\leq m$.
\begin{eqnarray}
\|Z_k-Z_{k,\delta}\|&=&\|(\sum_{j=1}^{N_\delta} P_j)(Z_k-Z_{k,\delta}))\|\nonumber\\
&=&\|\sum_{j=1}^{N_\delta} (\varphi_{k,j}(P_jH_1,\ldots,P_jH_d)P_j-\varphi_{k,j}(x_{1,j},\ldots,x_{d,j}) P_j)\|\nonumber\\
&=&\max_{1\leq j\leq N_{\delta}} \|\varphi_{k,j}(P_jH_1,\ldots,P_jH_d)P_j-\varphi_{k,j}(x_{1,j},\ldots,x_{d,j}) P_j\|\nonumber\\
&\leq&\delta. 
\label{smooth_matrix_manifold_approximation_inequality}
\end{eqnarray}
By \eqref{multivariable_functional_calculus_identity}, \eqref{matrix_manifold_commutation_relations} and \eqref{smooth_matrix_manifold_approximation_inequality} we will have that $\tilde{\mathbf{Z}}_\delta$ is a hermitian $\delta$-CPMA of 
$\mathbf{Z}$. This completes the proof.
 \end{proof}

\begin{definition}[Uniform piecewise differentiable local connectivity in $M_n^m$] If for any given $\varepsilon>0$, there is $\delta>0$ such that for any two $m$-tuples $\mathbf{X}$ and $\mathbf{Y}$ in a matrix variety $\mathbb{Z}^m\subset M_n^m$ such that 
$\eth(\mathbf{X},\mathbf{Y})\leq \delta$ ($\mathbf{X}$ and $\mathbf{Y}$ are $(\delta_\varepsilon,\eth)$-close), we have that $\mathbf{X}\leadsto_{\varepsilon(\delta)} \mathbf{Y}$ relative to $N_\eth(\mathbf{X},\varepsilon)\cap \mathbb{Z}^m$.
We will say that the matrix variety $\mathbb{Z}^m \subseteq M_n^m$ is uniformly locally piecewise differentiably connected ({\bf ULPDC}).
\end{definition}

\begin{remark}
It is important to remark that $\varepsilon$ and $\delta$ must not depend on $n$.
\end{remark}

\begin{remark}
It is important to notice that the components of two $m$-tuples $\mathbf{X}=(X_1,\ldots,X_m)$ and $\mathbf{Y}=(Y_1,\ldots,Y_m)$ in $\mathbb{I}^m(n)$ need not to satisfy the commutation relations $[X_j,Y_k]=\mathbf{0}_n$ for each $j,k$ in general.
\end{remark}

\subsubsection{Approximate Joint Isospectral Interpolating Paths} 
\label{approximate_joint_isospectral_flows}
Let $\varepsilon>0$ and let $m\geq 1$ be an integer, in this section we will prove that there is 
$\delta>0$ such that for any integer $n\geq 1$, any $m$-tuple $\mathbf{H}=(H_1,\ldots,H_m)\in \mathbb{I}^m(n)$ and any unitary matrix 
$W$ in $M_n$ that satisfy the relations $\|WH_j-H_jW\|<\delta$, $1\leq j\leq m$. There exist a unitary $\tilde{W}$ and a $\delta$-CPMA $\tilde{\mathbf{H}}=(\tilde{H}_1,\ldots,\tilde{H}_m)\in C^\ast(H_1,\ldots,H_m)'\cap \mathbb{I}^m(n)$ such that $W^\ast \tilde{H}_j W=\tilde{W}^\ast \tilde{H}_j\tilde{W}$ for each $1\leq j\leq m$. Moreover, there is a piecewise differentiable unitary path $\{W(t)\}_{t\in [0,1]}\subset \mathbb{U}(n)$ such that $W(0)=\mathbf{1}_n$, $W(1)=\tilde{W}$ and $\|W(t)\tilde{H}_j-\tilde{H}_jW(t)\|<\varepsilon$ for each $1\leq j\leq n$ and each $0\leq t\leq 1$. 

\begin{remark}
\label{aproximate_joint_isospectral_remark}
It can be seen that if we set $\Psi=\widehat{\mathrm{Ad}[W]}$, $\tilde{\Psi}=\widehat{\mathrm{Ad}[\tilde{W}]}$ and $\psi_t=\widehat{\mathrm{Ad}[W(t)]}$, we will have that.
\begin{equation}
\left\{
\begin{array}{l}
\psi_0(\tilde{\mathbf{H}})=\mathrm{id}_{\mathbb{I}^m(n)}(\tilde{\mathbf{H}})=\tilde{\mathbf{H}},\\
\psi_1(\tilde{\mathbf{H}})=\Psi(\tilde{\mathbf{H}})=\tilde{\Psi}(\tilde{\mathbf{H}}),\\
\psi_t(\tilde{\mathbf{H}})[j]\psi_t(\tilde{\mathbf{H}})[k]=\psi_t(\tilde{\mathbf{H}})[k]\psi_t(\tilde{\mathbf{H}})[j], \:\: 1\leq j,k\leq m, 0\leq t\leq 1,\\
\psi_t(\tilde{\mathbf{H}})\in B_\eth(\tilde{\mathbf{H}},\varepsilon), \:\: 0\leq t\leq 1,\\
\Lambda(\psi_t(\tilde{\mathbf{H}}))=\Lambda(\tilde{\mathbf{H}}), \:\: 0\leq t\leq 1,
\end{array}
\right.
\label{approx_isospectral}
\end{equation}
Any path of inner $\ast$-automorphisms like $\{\psi_t\}_{t\in [0,1]}$, that satisifes \eqref{approx_isospectral} will be called a {\bf $\varepsilon$-approximate joint isospectral interpolant} 
with respect to $(\tilde{\Psi},\tilde{\mathbf{H}},\mathbb{I}^m(n))$.
\end{remark}

\begin{lemma}
\label{existence_of_almost_unit}
 Given $r\in \ZZ^+$ and $\nu>0$, there is $\delta:=\hat{\delta}(\nu,r)>0$ such that given a unitary $W$ and a normal contraction $D$ in $M_n$ with $n\geq 2$, if $D=\sum_{j=1}^r \alpha_jP_j$ for $2\leq r\in \ZZ$ with $\NS{\alpha}{r}\in \disk$, and the 
 set $\{P_j\}$ is an orthogonal partition of unity consisting of diagonal projectors, then there 
 is a unitary matrix $Z\in M_n$ such that $[Z,D]=\mathbf{0}_n$ and $\|\I_n-WZ\|\leq\nu$ whenever $\|WDW^*-D\|\leq \delta$.
\end{lemma}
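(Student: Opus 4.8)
The plan is to obtain $Z$ as a unitary that is block-diagonal with respect to the orthogonal decomposition $\CC^n=\bigoplus_j\mathrm{ran}\,P_j$ determined by $D$; every such $Z$ commutes with $D=\sum_j\alpha_jP_j$, so the task reduces to finding one with $\|\mathbf{1}_n-WZ\|=\|W^\ast-Z\|$ small, and the natural candidate is the unitary (polar) part of the block-diagonal compression $\sum_jP_jW^\ast P_j$ of $W^\ast$. First I would reduce to the case where the $\alpha_j$ are pairwise distinct (merge the $P_j$'s carrying equal values into their sum, which is still a diagonal projector and does not increase $r$) and $D\neq\lambda\mathbf{1}_n$, and record that $\|WDW^\ast-D\|\le\delta$ forces $\|[D,W]\|\le\delta$ by unitary invariance of the norm.

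The core estimate is on the off-diagonal blocks $W_{jk}:=P_jWP_k$: from $P_j[D,W]P_k=(\alpha_j-\alpha_k)W_{jk}$ one gets $\|W_{jk}\|\le\delta/|\alpha_j-\alpha_k|$ for $j\neq k$. If the distinct $\alpha_j$ were separated by a fixed gap this would already finish things: summing the at most $r^2$ off-diagonal blocks shows $W$ is within a controllably small quantity of the block-diagonal compression $W^{(0)}=\sum_jP_jWP_j$, each $P_jW^{(0)}P_j$ is then close to a unitary on $P_j\HS$, and replacing it by its polar unitary part $Z_j$ (stability of the polar decomposition) yields $Z=\bigoplus_jZ_j$ with $\|W^\ast-Z\|\le\nu$. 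For \emph{arbitrary} $\alpha_j\in\disk$ I would instead run a clustering step first: a pigeonhole over at most $r$ scales (single-linkage clustering of the $r$ points $\alpha_1,\dots,\alpha_r$, together with Lemma \ref{cont_funct_calc_tool} applied to cutoff functions adapted to the clusters) produces a coarser decomposition $Q_1,\dots,Q_s$ with $s\le r$, each $Q_i$ a sum of $P_j$'s, whose eigenvalue clusters have small internal diameter but are mutually separated by a gap $\rho=\rho(\nu,r)$; the estimate above then makes the $Q$-off-diagonal part of $W$ small, and inside each $Q_i\HS$ one recurses on $r$ (since $|G_i|<r$) using that $Q_i[D,W]Q_i=[D|_{Q_i\HS},\,Q_iWQ_i]+(\text{small})$, the base case $r=1$ being trivial. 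Collecting the errors from the finitely many ($r$-controlled) recursion levels and polar corrections gives $\|\mathbf{1}_n-WZ\|\le\nu$ once $\delta=\hat\delta(\nu,r)$ is small enough.

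The hard part is precisely this clustering/recursion: squeezing a usable scale separation $\rho$ out of only $r$ eigenvalues and keeping the accumulated constants under control, which is exactly why $\delta$ must be allowed to depend on $r$ as well as on $\nu$. (In the way the lemma is used later, the spectra in play are those produced by Lemmas \ref{Existence_of_1D_PMA} and \ref{Existence_of_mD_PMA}, whose grid structure supplies such a separation directly; without some control of this kind the bound $\|W_{jk}\|\le\delta/|\alpha_j-\alpha_k|$ degenerates, as one sees already for a transposition $W$ acting on a diagonal $D$ with two nearby eigenvalues.)
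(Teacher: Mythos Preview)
Your approach is correct but genuinely different from the paper's. The paper does not work with the off-block estimate $\|P_jWP_k\|\le\delta/|\alpha_j-\alpha_k|$ or with polar parts at all. Instead it writes each $P_k=\ell_k(D)$ for continuous $\ell_k\in C(\disk)$, invokes Lemma~\ref{cont_funct_calc_tool} to get $\|WP_kW^\ast-P_k\|$ small, and then appeals to the standard close-projections perturbation lemma (\cite[Lemma~2.5.1]{Amenable_algebras_Lin}) to obtain unitaries $W_j$ with $\|\mathbf{1}_n-W_j\|\le\sqrt{2}\,\|WP_jW^\ast-P_j\|$ and $W_j^\ast P_jW_j=WP_jW^\ast$; the block-diagonal unitary is then assembled as $\tilde W=\sum_jW_jWP_j$ and one sets $Z=\tilde W^\ast$. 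So where you use an explicit commutator/block argument plus polar decomposition, the paper uses continuous functional calculus plus a $C^\ast$-algebraic perturbation result.

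What each buys: the paper's route is short and avoids any clustering or recursion, but the $\delta$ it extracts from Lemma~\ref{cont_funct_calc_tool} tacitly depends on the moduli of continuity of the $\ell_k$, hence on the gaps $|\alpha_j-\alpha_k|$, so the claimed uniformity $\delta=\hat\delta(\nu,r)$ is not fully justified by that argument alone; your clustering/recursion is precisely the extra work needed to make $\delta$ depend only on $(\nu,r)$. Your final remark is exactly on point: in the only use of the lemma (inside Lemma~\ref{existence_jpia}) the projections come from the grid of Lemmas~\ref{Existence_of_1D_PMA}--\ref{Existence_of_mD_PMA}, where the eigenvalues are separated by a fixed amount and the paper's simpler argument suffices.
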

\begin{proof}
Let $\mu:=\|WDW^*-D\|=\|WD-DW\|$. Then there are $r$ continuous functions $\ell_1,\ldots,\ell_r\in C(\disk)$ such that 
$P_k:=\ell_k(D)$. By Lemma \ref{cont_funct_calc_tool} one can find $\delta:=\hat{\delta}(\nu,r)>0$ such that, $\|WP_kW^\ast-P_k\|=\|W\ell_k(D)W^\ast-\ell_k(D)\|= \|\ell_k(WDW^\ast)-\ell_k(D)\|\leq \nu/(\sqrt{2}r)<1/(\sqrt{2}r)$ whenever $\mu\leq \delta$. Since $\nu<1$, by perturbation theory of amenable $C^*$-algebras (in the sense of \cite[Lemma 2.5.1]{Amenable_algebras_Lin}) we will have that there is a unitary $W_j$ such that $\|\mathbf{1}-W_j\|\leq \sqrt{2}\|WP_jW^\ast-P_j\|\leq \nu/r$ and $W_j^\ast P_j W_j=WP_jW^\ast$, and this implies that $W_jWP_j=P_jW_j W$ for each $j$. Since $\{P_j\}$ is an orthogonal partition of unity, we will have that $\tilde{W}:=\sum_j W_jWP_j$ is a 
unitary matrix which satisfies the commutation relation $[\tilde{W},D]=0$ together with the normed inequalities.
\begin{eqnarray*}
\|W-\tilde{W}\|&=&\|W\sum_jP_j -\sum_j W_jWP_j\|\\
               &=&\|\sum_j((\mathbf{1}_n-W_j)WP_j)\|\\
               &\leq&\sum_j \|\mathbf{1}_n-W_j\|\\
               &\leq&\sum_j \frac{1}{r}\nu=r(\frac{1}{r}\nu)=\nu
\end{eqnarray*}
Let us set $Z:=\tilde{W}^\ast$ then $\|\mathbf{1}_n-WZ\|=\|\tilde{W}-W\|\leq \nu$. This completes the proof.
\end{proof}

\begin{lemma}
\label{existence_jpia}
Given $\nu>0$ and any $m\in \mathbb{Z}^ +$, there is $\delta>0$ such that for any two $m$-tuples $\mathbf{X},\mathbf{Y}\in \mathbb{I}^m(n)$ that satisfy the constraint $\eth(\mathbf{X},\mathbf{Y})<\delta$, there exist two inner morphisms $\Psi$ and $\tilde{\Psi}$ of $\mathbb{I}^m(n)$, together with a hermitian $\delta$-CPMA $\tilde{\mathbf{X}}\in \mathbb{I}^m(n)$ of $\mathbf{X}$ such that $\tilde{\Psi}(\tilde{\mathbf{X}})=\Psi(\tilde{\mathbf{X}})$, $\tilde{\Psi}(\mathbf{\tilde{X}})$ is a hermitian $\delta$-CPMA of $\Psi(\mathbf{X})$, $\Psi(\mathbf{X})$ is a hermitian $\nu$-CPMA of $\mathbf{Y}$ and $\eth(\tilde{\mathbf{X}},\tilde{\Psi}(\tilde{\mathbf{X}}))<\nu$. Moreover, there is a $\nu$-approximate joint isospectral interpolant $\{\psi_t\}_{t\in[0,1]}$
with respect to $(\tilde{\Psi},\tilde{\mathbf{X}},\mathbb{I}^m(n))$.
\end{lemma}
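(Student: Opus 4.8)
\emph{Plan.} The idea is to string together the three approximation lemmas in a fixed order, fixing the quantitative parameters from the inside out. Fix $m$ and $\nu>0$. Apply Lemma \ref{Joint_spectral_variation_inequality_2} with a parameter $\varepsilon_1\le \nu$ to be pinned down last; it yields $\delta_1=\varepsilon_1/K_m$ and, for any $\mathbf{X},\mathbf{Y}\in\mathbb{I}^m(n)$ with $\eth(\mathbf{X},\mathbf{Y})<\delta_1$, a unitary $W$ such that $\Psi:=\widehat{\mathrm{Ad}[W]}$ sends $C^\ast(\mathbf{X})$ into $C^\ast(\mathbf{Y})'$ and $\eth(\Psi(\mathbf{X}),\mathbf{Y})\le\varepsilon_1$, $\eth(\Psi(\mathbf{X}),\mathbf{X})\le\varepsilon_1$. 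By Remark \ref{inner_morphisms_remark} $\Psi$ is an inner morphism of $\mathbb{I}^m(n)$; since $\sigma(\Psi(X_k))=\sigma(X_k)$, $d_H(\sigma(X_k),\sigma(Y_k))\le\|X_k-Y_k\|<\delta$ by Proposition \ref{spectral_variation_inequality_1}, and $\Psi(X_k)\in C^\ast(\mathbf{Y})'$, the tuple $\Psi(\mathbf{X})$ is a hermitian $\nu$-CPMA of $\mathbf{Y}$ provided $\varepsilon_1\le\nu$ and $\delta\le\nu$. This settles one clause.

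Next apply Lemma \ref{Existence_of_mD_PMA} to $\mathbf{X}$, producing a hermitian $\delta$-CPMA $\tilde{\mathbf{X}}=(\tilde{X}_1,\ldots,\tilde{X}_m)$ with $\tilde{X}_k=\sum_{l=1}^{N_\delta}x_{k,l}P_l$ for an orthogonal partition of unity $\{P_l\}$ that commutes with every $X_j$; crucially $N_\delta$ is bounded by the number of grid cells of $[-1,1]^m$ at mesh $\delta$, hence is independent of $n$. As $\{P_l\}$ commutes with $\mathbf{X}$, a common unitary $Q$ diagonalizes $\mathbf{X}$, $\tilde{\mathbf{X}}$ and all the $P_l$ simultaneously; fix distinct, well-separated $\beta_1,\ldots,\beta_{N_\delta}\in\disk$ and set $D:=\sum_l\beta_l P_l$, a diagonal-in-the-$Q$-basis normal contraction with $C^\ast(D)\supseteq C^\ast(\tilde{\mathbf{X}})$ and exactly $N_\delta$ eigenvalues. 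The point of replacing $\tilde{\mathbf{X}}$ by $D$ is that $D$ has an $n$-independent number of well-separated eigenvalues, so Lemma \ref{existence_of_almost_unit} applies with $r=N_\delta$ independent of $n$.

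Now carry out the correction. Assuming $\varepsilon_1$ has been chosen (depending only on $\nu$, $m$, through $N_\delta$ and $\hat{\delta}(\,\cdot\,,N_\delta)$) so small that $\|WDW^\ast-D\|\le\hat{\delta}(\nu',N_\delta)$ for a fixed $\nu'<\nu/\pi$, Lemma \ref{existence_of_almost_unit} applied in the $Q$-basis furnishes a unitary $Z$ with $[Z,D]=\mathbf{0}_n$ — hence $Z\in C^\ast(\tilde{\mathbf{X}})'$ — and $\|\mathbf{1}_n-WZ\|\le\nu'$. Put $\tilde{W}:=WZ$ and $\tilde{\Psi}:=\widehat{\mathrm{Ad}[\tilde{W}]}$, again an inner morphism of $\mathbb{I}^m(n)$. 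Since $Z$ commutes with each $\tilde{X}_k$ we get $\tilde{\Psi}(\tilde{\mathbf{X}})=W\tilde{\mathbf{X}}W^\ast=\Psi(\tilde{\mathbf{X}})$; conjugating ``$\tilde{\mathbf{X}}$ is a $\delta$-CPMA of $\mathbf{X}$'' by $W$ shows $\tilde{\Psi}(\tilde{\mathbf{X}})$ is a $\delta$-CPMA of $\Psi(\mathbf{X})$ (unitary conjugation preserves norms, spectra, commutation); and $\eth(\tilde{\mathbf{X}},\tilde{\Psi}(\tilde{\mathbf{X}}))=\max_k\|W\tilde{X}_k-\tilde{X}_kW\|\le\varepsilon_1+2\delta<\nu$ once $\varepsilon_1+2\delta<\nu$. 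Finally $\|\mathbf{1}_n-\tilde{W}\|\le\nu'$ lets us write $\tilde{W}=\exp(\mathbf{i}A)$ with $\|A\|\le\tfrac{\pi}{2}\nu'$; then $W(t):=\exp(\mathbf{i}tA)$ and $\psi_t:=\widehat{\mathrm{Ad}[W(t)]}$ give a real-analytic path with $W(0)=\mathbf{1}_n$, $W(1)=\tilde{W}$, $\|W(t)\tilde{X}_j-\tilde{X}_jW(t)\|\le 2\|\mathbf{1}_n-W(t)\|\le 2\|A\|<\nu$ and $\Lambda(\psi_t(\tilde{\mathbf{X}}))=\Lambda(\tilde{\mathbf{X}})$ for all $t$ (conjugation by a single unitary), so $\{\psi_t\}$ is the required $\nu$-approximate joint isospectral interpolant with respect to $(\tilde{\Psi},\tilde{\mathbf{X}},\mathbb{I}^m(n))$ in the sense of Remark \ref{aproximate_joint_isospectral_remark}. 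Taking $\delta$ to be the minimum of $\nu$, $\delta_1$, $(\nu-\varepsilon_1)/2$ and the threshold from the commutator estimate — all determined by $\nu,m$ only — completes the proof.

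The step I expect to be the main obstacle is the estimate $\|WDW^\ast-D\|\le\hat{\delta}(\nu',N_\delta)$: it asks us to control, with a bound free of $n$, how badly the almost-commuting unitary $W$ fails to commute with a mesh-$\delta$ spectral function of $\mathbf{X}$. A crude bound through the Lipschitz constant of the pinching map is too weak (it degrades like $1/\delta$), so one must exploit the genuine finiteness of the joint spectrum of $\tilde{\mathbf{X}}$ — and the freedom in choosing both the CPMA grid and the eigenvalues $\beta_l$ of $D$ — arguing via Borel functional calculus on the von Neumann algebra generated by $\mathbf{X}$ and a careful coupling of the two grids, in the spirit of Lemma \ref{Existence_of_mD_PMA_manifolds}, to keep every constant independent of $n$.
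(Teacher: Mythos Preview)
Your overall architecture matches the paper's exactly: apply Lemma~\ref{Joint_spectral_variation_inequality_2} to get $W$ and $\Psi=\widehat{\mathrm{Ad}[W]}$, take a CPMA with an $n$-independent projective decomposition $\{P_l\}_{l\le N_\delta}$, invoke Lemma~\ref{existence_of_almost_unit} to correct $W$ to a unitary commuting with the $P_l$, and finally write the correction as an exponential to produce the interpolant $\{\psi_t\}$. The only cosmetic difference is that the paper takes the CPMA of $\hat\Phi(\mathbf X)$ and then pulls back by $\Phi^{-1}$, whereas you take the CPMA of $\mathbf X$ directly; both are equivalent after a unitary change of frame.

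Where your write-up stops short is precisely the point you flag as the ``main obstacle'': the estimate $\|WDW^\ast-D\|\le\hat{\delta}(\nu',N_\delta)$, or equivalently control of $\|WP_lW^\ast-P_l\|$. You dismiss the functional-calculus route as giving a useless $1/\delta$ Lipschitz bound and propose instead an unspecified ``careful coupling of the two grids'' via Borel calculus. But the paper shows the functional-calculus route \emph{is} the right one, once it is applied to the correct operator. The point is that each $P_l$ (and each factor $P_{k,j_k}$) is $p(\tilde X_k)$ for a \emph{continuous} function $p\in C([-1,1])$ depending only on the fixed representation grid $R_\delta$, not on $n$; this is possible precisely because $\sigma(\tilde X_k)\subseteq R_\delta$ is contained in a finite, $n$-independent set, so one can interpolate $p$ freely between grid points. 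Lemma~\ref{cont_funct_calc_tool} then converts the already-established bound
\[
\|W^\ast\tilde X_k W-\tilde X_k\|\le 2\,\eth(\mathbf X,\tilde{\mathbf X})+\eth(\Psi(\mathbf X),\mathbf X)\le 2\delta+K_m\mu
\]
into $\|W^\ast P_{k,j_k}W-P_{k,j_k}\|<\nu_1/(2\sqrt{2}mN_\delta)$, with a threshold that depends only on $p$ (hence on the grid, hence on $\nu,m$) and on $\nu_1$. No coupling of grids is needed; the ``$1/\delta$'' you worry about is absorbed because the grid mesh is fixed once $\nu$ is fixed, rather than being driven to zero with $\mu$.

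In short: your proof sketch is correct and follows the paper's line, but the step you leave open is exactly the one the paper closes with Lemma~\ref{cont_funct_calc_tool}; replacing your final paragraph with that argument completes the proof.
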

\begin{proof}
Let us consider two $m$-tuples $\mathbf{X}$ and $\mathbf{Y}$ in $\mathbb{I}^m(n)$, and let us set 
$\mu:=\eth(\mathbf{X},\mathbf{Y})$. We can assume, by changing basis if necessary, that $\mathbf{Y}$ is a $m$-tuple of diagonal matrices. By Lemma \ref{Joint_spectral_variation_inequality_2} we will have that there exists a $C^*$-homomorphism $\Phi:M_n\to M_n$ defined by the expression $\Phi=\mathrm{Ad}[W]$ for some $W\in \U{n}$, such that $\hat{\Phi}(\mathbf{X})$ is a $m$-tuple of diagonal contractions and 
\begin{equation}
\eth(\hat{\Phi}(\mathbf{X}),\mathbf{Y})\leq K_m\mu.
\label{special_joint_spectral_variation_inequality}
\end{equation}
By applying Lemma \ref{Existence_of_mD_PMA} to the hermitian $m$-tuple
\begin{equation}
\hat{\Phi}(\mathbf{X})=(\hat{Y}_{1},\ldots,\hat{Y}_{m}),
\label{Phi_hermitian_partition}
\end{equation}
we can obtain a hermitian $\mu$-CPMA 
\begin{equation}
\tilde{\mathbf{Y}}=(\tilde{Y}_{1},\ldots,\tilde{Y}_{m})
\label{PMA_Phi_hermitian_partition}
\end{equation}
of $\hat{\Phi}(\mathbf{X})$. We will have that the $m$-tuple $\tilde{\mathbf{X}}=\hat{\Phi}^{-1}(\tilde{\mathbf{Y}})$ satisfies the constraint $\eth(\mathbf{X},\tilde{\mathbf{X}})\leq \mu$.

As a consequence of Lemma \ref{Existence_of_mD_PMA} there exists a projective decomposition $P_\delta(\hat{\Phi}(\mathbf{X}))$ of the form
\begin{eqnarray}
P_\delta(\hat{\Phi}(\mathbf{X}))&=&\{P_{1},\ldots,P_{N_\delta}\}\nonumber \\
&=&\left\{P_{1,j_{1}}P_{2,j_{2}}\cdots P_{m,j_{m}}\left| \begin{array}{l}
P_{k,j_{k}}\in P_\delta(\hat{Y}_{k}),\\
1\leq j_{k}\leq N_{\delta,k},\\
1\leq k \leq m,
\end{array}
 \right.\right\},
 \label{def_projective_relations}
\end{eqnarray}
where each $\hat{Y}_{k}$ is the $k$th component of $m$-tuple \eqref{Phi_hermitian_partition}. As a consequence of the proof of Lemma \ref{Existence_of_mD_PMA}, we have that for each $1\leq k\leq m$ and each $1\leq j_{k}\leq N_{\delta}$, there exists a continuous function $p_{k,j_{k}}\in C([-1,1])$ such that 
\begin{equation}
P_{k,j_{k}}=p_{k,j_{k}}(\tilde{Y}_{k}),
\label{continuous_spectral_projectors}
\end{equation}
where each $\tilde{Y}_{k}$ is determined by \eqref{PMA_Phi_hermitian_partition}. By Lemma \ref{cont_funct_calc_tool} we will have that given $0<\nu_1<1$, there is $\delta>0$ such that
\begin{eqnarray}
\|\Phi^{-1}(P_{k,j_{k}})-P_{k,j_{k}}\|&=&\|\Phi^{-1}(p_{k,j_{k}}(\tilde{Y}_{k}))-p_{k,j_{k}}(\tilde{Y}_{k})\|\nonumber \\
&=&\|W^\ast p_{k,j_{k}}(\tilde{Y}_{k})W-p_{k,j_{k}}(\tilde{Y}_{k})\|\nonumber \\
&=&\|p_{k,j_{k}}(W^\ast \tilde{Y}_{k}W) -p_{k,j_{k}}(\tilde{Y}_{k})\|\nonumber\\
&<&\nu_1/{(2\sqrt{2}mN_{\delta})}<1/{(2\sqrt{2}mN_{\delta})}
\label{first_projective_inequality}
\end{eqnarray}
whenever $\mu<\delta$. As a consequence of \eqref{first_projective_inequality} we will have that for each projection $P_{j}=P_{1,j_{1}}\cdots P_{m,j_{m}}$ in $P_\delta(\hat{\Phi}(\mathbf{X}))$ determined by the relations \eqref{def_projective_relations}, the following estimate will hold.
\begin{eqnarray}
\|W^\ast P_{j} W-P_{j}\|&=&\|\Phi^{-1}(P_{j})-P_{j}\|\nonumber\\
&\leq& \sum_{k=1}^m\|\Phi^{-1}(P_{k,j_{k}})-P_{k,j_{k}}\|\nonumber\\
&<& m\nu_1/{(2m\sqrt{2}N_{\delta}})=\nu_1/{(2\sqrt{2}N_{\delta})}
\label{second_projective_inequality}
\end{eqnarray}
Since the inequalities \eqref{second_projective_inequality} hold for each $P_{j}$ in $P_\delta(\hat{\Phi}(\mathbf{X}))$, we can apply Lemma \ref{existence_of_almost_unit} with $r=N_{\delta}$ to compute a unitary $\tilde{W}\in M_n$ that satisfies the constraint 
\begin{equation}
\|\I_n-\tilde{W}^\ast W\|=\|W-\tilde{W}\|\leq \nu_1/2,
\label{almost_unit_projective_inequality}
\end{equation}
together with the commutation relations
\begin{equation}
\tilde{W}P_{j}=P_{j}\tilde{W}
\label{projective_commutation_relations}
\end{equation}
for each $P_{j}$ in $P_\delta(\hat{\Phi}(\mathbf{X}))$. Let us set $\tilde{\Phi}=\mathrm{Ad}[Z]$ with 
$Z=\tilde{W}^\ast W$. By Lemma \ref{Existence_of_mD_PMA} we have that each $\tilde{Y}_{j}$ determined by \eqref{PMA_Phi_hermitian_partition} belongs to $\mathrm{span}\: P_\delta(\hat{\Phi}(\mathbf{X}))$, as a consequence of \eqref{projective_commutation_relations} we will have that
\begin{equation}
\tilde{\Phi}(\Phi^{-1}(\tilde{Y}_{j}))=\tilde{W}^\ast \tilde{Y}_{j}\tilde{W}=\tilde{Y}_{j}=\Phi(\Phi^{-1}(\tilde{Y}_{j})),
\label{first_local_indentity_relation}
\end{equation}
by applying Lemma \ref{Joint_spectral_variation_inequality_2} again together with the commutation relations \eqref{projective_commutation_relations} we will also have that
\begin{eqnarray}
\|\tilde{\Phi}(\Phi^{-1}(\tilde{Y}_{j}))-\Phi^{-1}(\tilde{Y}_{j})\|&=&\|\tilde{W}^\ast \tilde{Y}_{j}\tilde{W}-W^\ast \tilde{Y}_{j}W\|\nonumber\\
&=&\|\tilde{Y}_{j}-W^\ast \tilde{Y}_{j}W\|\nonumber\\
&\leq&\|\tilde{Y}_{j}-\hat{Y}_{j}\|+\|\hat{Y}_{j}-W^\ast \hat{Y}_{j}W\|\nonumber\\ 
&&+\|W^\ast \hat{Y}_{j}W-W^\ast \tilde{Y}_{j}W\|\nonumber\\
&\leq&2\|\tilde{Y}_{j}-\hat{Y}_{j}\|+\|\Phi(X_j)-X_j\| \nonumber\\
&<&2\delta+K_m\delta=(K_m+2)\delta.
\label{second_local_indentity_relation}
\end{eqnarray}
Since $\|\I_n-Z\|<\nu_1/2<1$ by \eqref{almost_unit_projective_inequality}, we will have that there is a hermitian matrix contraction $H\in M_n$ such that $Z=e^{\mathbf{i}\pi H/2}$. If for each $t\in [0,1]$ we set 
$\phi_t:=\mathrm{Ad}[e^{\mathbf{i}\pi t H/2}]$, we will have that $\phi_t:M_n\to M_n$ is a $\ast$-homomorphism for each $0\leq t\leq 1$ such that $\phi_0=\mathrm{id}_{M_n}$ and $\phi_1=\tilde{\Phi}$. We will also have that for each 
$0\leq t\leq 1$ and each $\tilde{Y}_{j}$ in \eqref{PMA_Phi_hermitian_partition}
\begin{equation}
\|\phi_t(\Phi^{-1}(\tilde{Y}_{j}))-\Phi^{-1}(\tilde{Y}_{j})\|\leq 2\|\I_n-e^{\mathbf{i}\pi t H/2}\|\leq 2\|\I_n-e^{\mathbf{i}\pi H/2}\|<\nu_1. 
\label{component_path_inequality}
\end{equation}
Let us set $\nu:=\max\{\nu_1,(K_m+2)\delta\}$, $\Psi=\hat{\Phi}$, $\tilde{\Psi}:=\hat{\tilde{\Phi}}$ and $\psi_t:=\hat{\phi_t}$ for each $t\in [0,1]$. By Remark \ref{inner_morphisms_remark} we have that the maps $\Psi$, $\tilde{\Psi}$ and all members of the family $\{\psi_t\}_{t\in[0,1]}$ preserve commutativity, hermiticity and the spectrum of each component $Z_j$ of any $m$-tuple $\mathbf{Z}\in \mathbb{I}^m(n)$, which implies that all of them belong to the set of morphisms of $\mathbb{I}^m(n)$. By the relations \eqref{first_local_indentity_relation} we have that.
\begin{eqnarray}
\tilde{\Psi}(\tilde{\mathbf{X}})&=&(\tilde{\Phi}(\Phi^{-1}(\tilde{Y}_{1})),\ldots,\tilde{\Phi}(\Phi^{-1}(\tilde{Y}_{m})))\nonumber\\
&=&(\Phi(\Phi^{-1}(\tilde{Y}_{1})),\ldots,\Phi(\Phi^{-1}(\tilde{Y}_{m})))\nonumber\\
&=&\Psi(\tilde{\mathbf{X}}).
\label{local_representation_identity_of_Psi}
\end{eqnarray} 
By the properties of \eqref{PMA_Phi_hermitian_partition}, by relation \eqref{local_representation_identity_of_Psi} and by inequalities \eqref{special_joint_spectral_variation_inequality} and \eqref{second_local_indentity_relation} we also have that
\begin{equation}
\left\{
\begin{array}{l}
\eth(\mathbf{X},\tilde{\mathbf{X}})=\eth(\Psi(\mathbf{X}),\tilde{\Psi}(\tilde{\mathbf{X}}))=\eth(\Psi(\mathbf{X}),\Psi(\tilde{\mathbf{X}}))< \delta,\\
\eth(\mathbf{Y},\Psi({\mathbf{X}}))=\eth(\mathbf{Y},\hat{\Phi}({\mathbf{X}}))<\nu\\ 
\eth(\tilde{\mathbf{X}},\tilde{\Psi}(\tilde{\mathbf{X}}))\leq\max_{j} \|\tilde{\Phi}(\Phi^{-1}(\tilde{Y}_{j}))-\Phi^{-1}(\tilde{Y}_{j})\|<\nu,
\end{array}
\right.
\label{first_conclusion_inequalities}
\end{equation}
and by combining \eqref{local_representation_identity_of_Psi} and \eqref{first_conclusion_inequalities} we will have that $\tilde{\Psi}(\tilde{\mathbf{X}})=\Psi(\tilde{\mathbf{X}})$, $\tilde{\Psi}(\mathbf{\tilde{X}})$ is a hermitian $\delta$-CPMA of $\Psi(\mathbf{X})$, $\Psi(\mathbf{X})$ is a hermitian $\nu$-CPMA of $\mathbf{Y}$ and $\eth(\tilde{\mathbf{X}},\tilde{\Psi}(\tilde{\mathbf{X}}))<\nu$. By definition of the elements in the family $\{\psi_t\}_{t\in [0,1]}$ we will have that $\psi_0=\widehat{\mathrm{id}_{M_n}}=\mathrm{id}_{\mathbb{I}^m(n)}$, 
$\psi_1=\widehat{\mathrm{Ad}[Z]}=\tilde{\Psi}$ and for each $t\in [0,1]$ by \eqref{component_path_inequality} we will have that
\begin{equation*}
\eth(\psi_t(\tilde{\mathbf{X}}),\tilde{\mathbf{X}})\leq \max_{j} \|\phi_t(\Phi^{-1}(\tilde{Y}_{j}))-\Phi^{-1}(\tilde{Y}_{j})\|<\nu.
\end{equation*}
This completes the proof.
\end{proof}

\begin{theorem}
\label{main_result}
The matrix variety $\mathbb{I}^m(n)$ is uniformly locally piecewise differentiably connected.
\end{theorem}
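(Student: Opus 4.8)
The plan is to deduce the theorem directly from Lemma \ref{existence_jpia}, by splicing four elementary pieces into one path and then controlling its total displacement from $\mathbf{X}$ with the triangle inequality for $\eth$. Given $\varepsilon>0$, I would set $\nu:=\varepsilon/4$ and take $\delta>0$ to be the constant furnished by Lemma \ref{existence_jpia} for this $\nu$ and the given $m$, shrinking $\delta$ further if convenient so that $\delta\le\nu$. Since every constant in play depends only on $m$, so does $\delta$; that $\delta$ is independent of $n$ is exactly what the definition of ULPDC requires. Now fix $n$ and two $m$-tuples $\mathbf{X},\mathbf{Y}\in\mathbb{I}^m(n)$ with $\eth(\mathbf{X},\mathbf{Y})<\delta$, and apply Lemma \ref{existence_jpia} to get $\tilde{\mathbf{X}}\in\mathbb{I}^m(n)$, inner morphisms $\Psi,\tilde{\Psi}$ of $\mathbb{I}^m(n)$, and a $\nu$-approximate joint isospectral interpolant $\{\psi_t\}_{t\in[0,1]}$ with $\psi_0=\mathrm{id}_{\mathbb{I}^m(n)}$ and $\psi_1=\tilde{\Psi}$, such that $\tilde{\mathbf{X}}$ is a hermitian $\delta$-CPMA of $\mathbf{X}$, $\Psi(\tilde{\mathbf{X}})=\tilde{\Psi}(\tilde{\mathbf{X}})$ is a hermitian $\delta$-CPMA of $\Psi(\mathbf{X})$, $\Psi(\mathbf{X})$ is a hermitian $\nu$-CPMA of $\mathbf{Y}$, $\eth(\tilde{\mathbf{X}},\Psi(\tilde{\mathbf{X}}))<\nu$, and $\eth(\psi_t(\tilde{\mathbf{X}}),\tilde{\mathbf{X}})<\nu$ for every $t$.

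The path I would build is the concatenation $\gamma:=\gamma_1\circledast\gamma_2\circledast\gamma_3\circledast\gamma_4$, where $\gamma_1(t):=(1-t)\mathbf{X}+t\tilde{\mathbf{X}}$, $\gamma_2(t):=\psi_t(\tilde{\mathbf{X}})$, $\gamma_3(t):=(1-t)\Psi(\tilde{\mathbf{X}})+t\Psi(\mathbf{X})$ and $\gamma_4(t):=(1-t)\Psi(\mathbf{X})+t\mathbf{Y}$, all affine combinations being taken componentwise. The endpoints match, $\gamma_1(1)=\tilde{\mathbf{X}}=\psi_0(\tilde{\mathbf{X}})=\gamma_2(0)$, $\gamma_2(1)=\tilde{\Psi}(\tilde{\mathbf{X}})=\Psi(\tilde{\mathbf{X}})=\gamma_3(0)$, $\gamma_3(1)=\Psi(\mathbf{X})=\gamma_4(0)$, $\gamma(0)=\mathbf{X}$, $\gamma(1)=\mathbf{Y}$, so $\gamma$ is a well-defined path; each piece is smooth in $t$---the middle one because $\psi_t=\widehat{\mathrm{Ad}[e^{\mathbf{i}\pi tH/2}]}$ for the hermitian contraction $H$ appearing in the proof of Lemma \ref{existence_jpia}---so $\gamma$ is piecewise differentiable.

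Next I would check that $\gamma$ never leaves $\mathbb{I}^m(n)$. For $\gamma_2$ this is automatic, since each $\psi_t$ is an inner $\ast$-automorphism and conjugation by a unitary preserves hermiticity, contractivity, commutativity and the spectrum (Remark \ref{inner_morphisms_remark}), so $\psi_t(\tilde{\mathbf{X}})\in\mathbb{I}^m(n)$. For the three affine pieces the point is that each of the families $\{X_1,\dots,X_m,\tilde{X}_1,\dots,\tilde{X}_m\}$, $\{\Psi(X_1),\dots,\Psi(X_m),\Psi(\tilde{X}_1),\dots,\Psi(\tilde{X}_m)\}$ and $\{\Psi(X_1),\dots,\Psi(X_m),Y_1,\dots,Y_m\}$ is a pairwise commuting family of hermitian contractions---the cross-commutations $\tilde{X}_kX_j=X_j\tilde{X}_k$, $\Psi(\tilde{X}_k)\Psi(X_j)=\Psi(X_j)\Psi(\tilde{X}_k)$ and $\Psi(X_j)Y_k=Y_k\Psi(X_j)$ being exactly the CPMA relations supplied by Lemma \ref{existence_jpia}, and the diagonal-block commutations following from $\mathbf{X},\tilde{\mathbf{X}},\Psi(\mathbf{X}),\Psi(\tilde{\mathbf{X}}),\mathbf{Y}\in\mathbb{I}^m(n)$. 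A componentwise affine combination of the two halves of such a family again consists of commuting hermitian contractions, so $\gamma_1(t),\gamma_3(t),\gamma_4(t)\in\mathbb{I}^m(n)$ for all $t$.

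The remaining step is the displacement bookkeeping. From $\eth(\mathbf{X},\mathbf{Y})<\delta$ and $\eth(\mathbf{Y},\Psi(\mathbf{X}))<\nu$ (the latter because $\Psi(\mathbf{X})$ is a $\nu$-CPMA of $\mathbf{Y}$) one gets $\eth(\mathbf{X},\Psi(\mathbf{X}))<\delta+\nu$; combining this with the segment bounds $\eth(\gamma_1(t),\mathbf{X})\le\delta$, $\eth(\gamma_2(t),\tilde{\mathbf{X}})<\nu$ together with $\eth(\tilde{\mathbf{X}},\mathbf{X})\le\delta$, $\eth(\gamma_3(t),\Psi(\mathbf{X}))<\delta$ (since $\Psi(\tilde{\mathbf{X}})$ is a $\delta$-CPMA of $\Psi(\mathbf{X})$) and $\eth(\gamma_4(t),\Psi(\mathbf{X}))<\nu$, the triangle inequality yields $\eth(\gamma(t),\mathbf{X})<2\delta+2\nu\le4\nu=\varepsilon$ for every $t\in[0,1]$. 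Hence $\gamma\in C([0,1],B_\eth(\mathbf{X},\varepsilon)\cap\mathbb{I}^m(n))$ is a piecewise differentiable path from $\mathbf{X}$ to $\mathbf{Y}$, i.e. $\mathbf{X}\leadsto_{\varepsilon}\mathbf{Y}$ relative to $B_\eth(\mathbf{X},\varepsilon)\cap\mathbb{I}^m(n)$, with $\delta$ depending only on $m$; this is precisely the ULPDC property. Essentially all of the substance is in Lemma \ref{existence_jpia}; I expect the only delicate points left to be bookkeeping ones---making the membership $\gamma_i(t)\in\mathbb{I}^m(n)$ genuinely follow from the CPMA commutation relations rather than merely from $\eth$-closeness, and keeping the two roles played by the single constant ``$\delta$'' in Lemma \ref{existence_jpia} consistent throughout.
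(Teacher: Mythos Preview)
Your proposal is correct and follows essentially the same approach as the paper: both proofs invoke Lemma \ref{existence_jpia} and then concatenate the same four pieces---the affine segment $\mathbf{X}\to\tilde{\mathbf{X}}$, the isospectral path $\psi_t(\tilde{\mathbf{X}})$, the affine segment $\Psi(\tilde{\mathbf{X}})\to\Psi(\mathbf{X})$, and the affine segment $\Psi(\mathbf{X})\to\mathbf{Y}$---using the CPMA cross-commutation relations to keep each affine piece inside $\mathbb{I}^m(n)$. Your bookkeeping is in fact slightly tidier than the paper's (you fix $\nu=\varepsilon/4$ and shrink $\delta\le\nu$ at the outset, whereas the paper sets $\varepsilon:=2(\nu+\delta)$ after the fact), and you make the verification of the cross-commutations for $\gamma_3$ and $\gamma_4$ more explicit than the paper does.
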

\begin{proof}
As a consequence of Lemma \ref{existence_jpia} we will have that given $\nu>0$, there is $\delta>0$, such that for any two $m$-tuples $\mathbf{X},\mathbf{Y}\in \mathbb{I}^m(n)$ that satisfy the constraint $\eth(\mathbf{X},\mathbf{Y})< \delta$, there exist a $m$-tuple $\tilde{X}\in \mathbb{I}^m(n)$, two inner morphisms 
$\Psi$ and $\tilde{\Psi}$ of $\mathbb{I}^m(n)$, and a family $\{\psi_t\}_{t\in[0,1]}$ of morphisms of $\mathbb{I}^m(n)$ such that $\tilde{\Psi}(\tilde{\mathbf{X}})=\Psi(\tilde{\mathbf{X}})$, $\tilde{\Psi}(\mathbf{\tilde{X}})$ is a hermitian $\delta$-CPMA of $\Psi(\mathbf{X})$, $\Psi(\mathbf{X})$ is a hermitian $\nu$-CPMA of $\mathbf{Y}$ and 
\begin{equation}
\eth(\tilde{\mathbf{X}},\tilde{\Psi}(\tilde{\mathbf{X}}))<\nu.
\label{controlled_curved_inequality}
\end{equation}
Moreover, the family of morphisms $\{\psi_t\}_{t\in[0,1]}$ of $\mathbb{D}^m(n)$ satisfies the following conditions.
\begin{equation}
\left\{
\begin{array}{l}
\psi_0=\mathrm{id}_{\mathbb{I}^m(n)},\: \: \psi_1=\tilde{\Psi},\\
\eth(\tilde{\mathbf{X}},\psi_t(\tilde{\mathbf{X}}))<\nu, 0\leq t\leq 1
\end{array}
\right.
\label{curved_path_conditions}
\end{equation}
Given any two $m$-tuples in $\mathbf{H}=(H_1,\ldots,H_m)$ and $\mathbf{K}=(K_1,\ldots,K_m)$ in $\mathbb{I}^m(n)$, let us write $\ell^H_{\mathbf{H},\mathbf{K}}$ to denote the path in $C([0,1],M_n^m)$ defined by the expression
\begin{equation}
\ell^H_{\mathbf{H},\mathbf{K}}(t)=((1-t)H_1+tK_1,\ldots,(1-t)H_m+tK_m), \:\: 0\leq t\leq 1.
\label{hermitian_line_definition}
\end{equation}
It can be seen that $(\ell^H_{j,\mathbf{H},\mathbf{K}}(t))^\ast=((1-t)H_j+tK_j)^\ast=(1-t)H_j+tK_j=\ell^H_{j,\mathbf{H},\mathbf{K}}(t)$ for each $1\leq j\leq m$ and each $0\leq t\leq 1$. It can also be seen that $\ell^H_{\mathbf{H},\mathbf{K}}(0)=\mathbf{H}$, $\ell^H_{\mathbf{H},\mathbf{K}}(1)=\mathbf{K}$, $\eth(\ell^H_{\mathbf{H},\mathbf{K}}(t),\ell^H_{\mathbf{H},\mathbf{K}}(s))\leq \eth(\mathbf{H},\mathbf{K})$ for any $0\leq s,t\leq 1$, and for each $1\leq j\leq m$ we have that $\|(1-t)H_j+tK_j\|\leq (1-t)\|H_j\|+t\|K_j\|\leq 1$, for each $0\leq t\leq 1$. By the previous facts we will have that if $H_iK_j=K_j H_i$ for each $1\leq i,j\leq m$, then $\ell^H_{\mathbf{H},\mathbf{K}}(t)\in \mathbb{I}^m(n)$ for each $0\leq t\leq 1$. Let us write $\kappa$ to denote the path defined by the following expression
\begin{equation}
\kappa(t)=\psi_t(\tilde{\mathbf{X}}), \:\: 0\leq t\leq 1.
\label{particular_curved_path_definition}
\end{equation}
By Lemma \ref{existence_jpia} and by \eqref{curved_path_conditions} we will have that $\kappa(0)=\tilde{\mathbf{X}}$, $\kappa(1)=\tilde{\Psi}(\tilde{\mathbf{X}})$, $\kappa(t)\in \mathbb{I}^m(n)$ and $\eth(\kappa(t),\tilde{\mathbf{X}})=\eth(\kappa(t),\kappa(0))<\nu$ for each $t\in [0,1]$.

Let us set $\varepsilon:=2(\nu+\delta)$. By the arguments presented in the previous paragraphs, we will have that the piecewise differentiable path $\gamma\in C([0,1],M_n^m)$ 
defined by the expression
\begin{equation*}
\gamma=((\ell^H_{\mathbf{X},\tilde{\mathbf{X}}}\circledast \kappa)\circledast  \ell^H_{\tilde{\Psi}(\tilde{\mathbf{X}}),\Psi(\mathbf{X})})\circledast \ell^H_{\Psi(\mathbf{X}),\mathbf{Y}},
\label{definition_of_solvent_path}
\end{equation*}
satisfies the conditions
\begin{equation}
\left\{
\begin{array}{l}
\gamma(0)=\mathbf{X}, \:\: \gamma(1)=\mathbf{Y},\\
\gamma(t)\in \mathbb{I}^m(n), \:\: 0\leq t\leq 1,\\
\eth(\gamma(t),\mathbf{X})<\varepsilon, \:\: 0\leq t\leq 1.
\end{array}
\right.
\label{local_path_connectivity_conditions}
\end{equation}
By \eqref{local_path_connectivity_conditions} we have that $\mathbf{X}\leadsto_{\varepsilon(\delta)} \mathbf{Y}$ relative to $N_\eth(\mathbf{X},\varepsilon)\cap \mathbb{I}^m(n)$. This completes the proof.
\end{proof}

\begin{corollary}
\label{main_corollary}
The matrix variety $\mathbb{D}^m(n)$ is uniformly locally piecewise differentiably connected.
\end{corollary}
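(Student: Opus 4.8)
The plan is to run through the proof of Theorem \ref{main_result} again, with the matrix $m$-disk $\mathbb{D}^m(n)$ in place of the matrix $m$-cube $\mathbb{I}^m(n)$ and with \emph{normal} replacing \emph{hermitian} everywhere. A direct reduction to Theorem \ref{main_result} through the partition map $\pi$ does not work: a path in $\mathbb{I}^{2m}(n)$ that stays $\varepsilon$-close to $\pi(\mathbf{X})$ can carry joint spectrum out of the closed polydisk $B(0,1)^m\subseteq\CC^m$, so its juncture need not lie in $\mathbb{D}^m(n)$. On the other hand, all the analytic input transfers without change: Lemma \ref{Joint_spectral_variation_inequality_2} is already stated for $N$-tuples of pairwise commuting \emph{normal} matrices and produces a unitary $W$ with $\Psi=\mathrm{Ad}[W]$ satisfying $\Psi(C^\ast(\mathbf{X}))\subseteq C^\ast(\mathbf{Y})'$; Lemma \ref{existence_of_almost_unit} is already stated for a \emph{normal} contraction $D=\sum_j\alpha_jP_j$ with $\{P_j\}$ an orthogonal partition of unity of diagonal projectors; and by Remark \ref{inner_morphisms_remark} every map of the form $\widehat{\mathrm{Ad}[U]}$ with $U$ unitary preserves normality, contractivity, commutativity and the spectrum of each component, hence is a morphism of $\mathbb{D}^m(n)$.

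First I would record the normal analogue of Lemma \ref{Existence_of_mD_PMA}: given $\delta>0$ and $\mathbf{X}=(X_1,\dots,X_m)\in\mathbb{D}^m(n)$, there is a normal $\delta$-CPMA $\tilde{\mathbf{X}}=(\tilde X_1,\dots,\tilde X_m)\in\mathbb{D}^m(n)$ of $\mathbf{X}$ lying in $C^\ast(\mathbf{X})'$, with each $\tilde X_k=\sum_j w_{k,j}P_j$ for $w_{k,j}\in B(0,1)$ and $\{P_j\}$ an orthogonal partition of unity commuting with $C^\ast(\mathbf{X})$. The proof copies that of Lemma \ref{Existence_of_mD_PMA}: fix a fine grid of $B(0,1)\subset\CC$, let $P_{k,g}=\chi_{V_g}(X_k)$ be the spectral projection of the normal matrix $X_k$ onto the Voronoi cell $V_g$ of a grid point $g$, form the commuting orthogonal partition of unity $\{P_j\}=\{P_{1,g_1}\cdots P_{m,g_m}\}$, and set $\tilde X_k=\sum_j w_{k,j}P_j$ where $w_{k,j}$ is the grid point attached to $P_j$ in the $k$-th slot; Proposition \ref{spectral_variation_inequality_1} gives $\|X_k-\tilde X_k\|\le\delta$ and $\delta$-density of $\sigma(\tilde X_k)$ in $\sigma(X_k)$, and after passing to $\tilde X_k$ each factor $P_{k,g}$ becomes a continuous function of $\tilde X_k$, exactly as in the cube case. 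With this at hand, the proof of Lemma \ref{existence_jpia} transcribes line by line — Lemma \ref{cont_funct_calc_tool} being applied exactly as before to the continuous functional calculus representing the factors $P_{k,g}$ — and yields inner morphisms $\Psi,\tilde\Psi$ of $\mathbb{D}^m(n)$, a normal $\delta$-CPMA $\tilde{\mathbf{X}}\in\mathbb{D}^m(n)$ of $\mathbf{X}$ with $\tilde\Psi(\tilde{\mathbf{X}})=\Psi(\tilde{\mathbf{X}})$, $\tilde\Psi(\tilde{\mathbf{X}})$ a normal $\delta$-CPMA of $\Psi(\mathbf{X})$, $\Psi(\mathbf{X})$ a normal $\nu$-CPMA of $\mathbf{Y}$, $\eth(\tilde{\mathbf{X}},\tilde\Psi(\tilde{\mathbf{X}}))<\nu$, and a $\nu$-approximate joint isospectral interpolant $\{\psi_t\}_{t\in[0,1]}$ consisting of conjugations by unitaries with respect to $(\tilde\Psi,\tilde{\mathbf{X}},\mathbb{D}^m(n))$.

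Finally I would build the path exactly as in the proof of Theorem \ref{main_result}: with $\kappa(t)=\psi_t(\tilde{\mathbf{X}})$ and $\ell_{\mathbf{H},\mathbf{K}}(t)=((1-t)H_1+tK_1,\dots,(1-t)H_m+tK_m)$, put
\[
\gamma=\bigl((\ell_{\mathbf{X},\tilde{\mathbf{X}}}\circledast\kappa)\circledast\ell_{\tilde\Psi(\tilde{\mathbf{X}}),\Psi(\mathbf{X})}\bigr)\circledast\ell_{\Psi(\mathbf{X}),\mathbf{Y}},\qquad \varepsilon:=2(\nu+\delta).
\]
The segment $\kappa$ stays in $\mathbb{D}^m(n)$ because each $\psi_t$ is a conjugation by a unitary. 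The one genuinely new point — and the main obstacle — is that, in contrast with the hermitian case, $(1-t)H_j+tK_j$ need not be normal when $H_j,K_j$ are merely commuting normal matrices; a one-line computation shows that $(1-t)H_j+tK_j$ is normal (and, being a convex combination, a contraction) and that $\ell_{\mathbf{H},\mathbf{K}}(t)$ has pairwise commuting entries as soon as the two endpoint $m$-tuples $\ast$-commute, i.e.\ $[H_i,K_j]=[H_i,K_j^\ast]=\mathbf{0}_n$ for all $i,j$ (equivalently, each $K_j\in C^\ast(\mathbf{H})'$). So the remaining task is to check this $\ast$-commutation for each of the three linear segments, and it holds in all three cases: for $\ell_{\mathbf{X},\tilde{\mathbf{X}}}$ because $\tilde{\mathbf{X}}\in C^\ast(\mathbf{X})'$; for $\ell_{\tilde\Psi(\tilde{\mathbf{X}}),\Psi(\mathbf{X})}=\ell_{\Psi(\tilde{\mathbf{X}}),\Psi(\mathbf{X})}$ because $\Psi$ is a $\ast$-homomorphism and $\tilde{\mathbf{X}}\in C^\ast(\mathbf{X})'$; and for $\ell_{\Psi(\mathbf{X}),\mathbf{Y}}$ because Lemma \ref{Joint_spectral_variation_inequality_2} gives $\Psi(\mathbf{X})[j]\in C^\ast(\mathbf{Y})'$ for every $j$. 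Granting this, consecutive segments match at their endpoints, $\gamma(t)\in\mathbb{D}^m(n)$ for all $t$, and the bound $\eth(\gamma(t),\mathbf{X})<\varepsilon$ follows from the same triangle-inequality bookkeeping as in Theorem \ref{main_result}; since $\delta$, and hence $\varepsilon$, is independent of $n$, this gives $\mathbf{X}\leadsto_{\varepsilon(\delta)}\mathbf{Y}$ relative to $B_\eth(\mathbf{X},\varepsilon)\cap\mathbb{D}^m(n)$, i.e.\ $\mathbb{D}^m(n)$ is uniformly locally piecewise differentiably connected.
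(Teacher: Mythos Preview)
Your argument is correct, but it takes a genuinely different route from the paper. The paper derives the corollary by exactly the reduction you dismiss in your opening paragraph: given $\mathbf{Z},\mathbf{S}\in\mathbb{D}^m(n)$ that are $\delta$-close, it passes to the hermitian partitions $\pi(\mathbf{Z}),\pi(\mathbf{S})\in\mathbb{I}^{2m}(n)$, applies Theorem~\ref{main_result} there to get a path $\gamma^H$, and then declares the juncture $\hat\gamma^D(t)=\upsilon(\gamma^H(t))$ to be the desired path in $\mathbb{D}^m(n)$. Your objection to this reduction is fair: the CPMA step inside Theorem~\ref{main_result} snaps the real and imaginary parts \emph{independently} to a grid in $[-1,1]$, and a joint eigenvalue $(a,b)$ with $a^2+b^2\le 1$ can be sent to a nearby grid point lying outside the unit disk, so nothing in the paper's two-line proof guarantees that $\upsilon(\gamma^H(t))$ is actually a tuple of contractions. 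What your approach buys is precisely this spectral control: by building the normal $\delta$-CPMA on a grid contained in $B(0,1)\subset\CC$ and then checking (via Fuglede) the $\ast$-commutation that keeps each linear segment $(1-t)H_j+tK_j$ a normal contraction, every point of your path is manifestly in $\mathbb{D}^m(n)$. The paper's route is more economical in intent---a direct reduction to the already-proved hermitian case---but to make it watertight one would have to modify the CPMA inside Theorem~\ref{main_result} so that the snapped pairs respect the disk constraint, which is essentially your normal CPMA in disguise.
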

\begin{proof}
Given any two $m$-tuples $\mathbf{Z}$ and $\mathbf{S}$ in $\mathbb{D}^m(n)$, such that $\eth(\mathbf{Z},\mathbf{S})\leq r$ for some 
$r>0$, there are two semisimple commuting hermitian partitions $\pi(\mathbf{Z})$ and $\pi(\mathbf{S})$ of $\mathbf{Z}$ and $\mathbf{S}$ respectively, such that $\eth(\pi(\mathbf{Z}),\pi(\mathbf{S}))\leq r$. By the previously described fact, we can apply Theorem \ref{main_result} to the corresponding  semisimple commuting hermitian partitions of any two $(\delta_\varepsilon,\eth)$-close $m$-tuples $\mathbf{Z},\mathbf{S}$ in $\mathbb{D}^m(n)$, replacing $\delta_\varepsilon$ by $\delta_\varepsilon/2$ if necessary, to obtain a path $\gamma^H\in C([0,1],\mathbb{I}^m(n))$ that solves the problem $\pi(\mathbf{Z}) \rightsquigarrow_{\varepsilon(\delta)} \pi(\mathbf{S})$ relative to $\mathbb{I}^m(n)\cap B_\eth(\mathbf{\pi(Z)},\varepsilon)$. The path $\hat{\gamma}^D$ obtained by taking the juncture $\hat{\gamma}^D(t)=\upsilon(\gamma^H(t))$ for each $t\in [0,1]$, will solve the problem $\mathbf{Z} \rightsquigarrow_{\varepsilon(\delta)} \mathbf{S}$ relative to $\mathbb{D}^m(n)\cap B_\eth(\mathbf{Z},\varepsilon)$. This completes the proof.
\end{proof}

\begin{theorem}
\label{main_generic_result}
Given a compact differentiable $d$-manifold $Z^d\subseteq [-1,1]^m \subseteq \RR^m$, the induced compact differentiable matrix manifold $\mathbb{V}^m[Z^d](n)$ is uniformly locally piecewise differentiably connected.
\end{theorem}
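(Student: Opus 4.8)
The plan is to run the proof of Theorem~\ref{main_result} with two substitutions forced by the geometry of $Z^d$. The first is routine: inside the proof of Lemma~\ref{existence_jpia} the only use of Lemma~\ref{Existence_of_mD_PMA} is to produce a hermitian $\delta$-CPMA $\tilde{\mathbf{X}}$ of (a diagonalisation of) $\mathbf{X}$ together with a $\delta$-projective decomposition $\{P_j\}$ of it, and replacing that call by Lemma~\ref{Existence_of_mD_PMA_manifolds} yields a $\delta$-CPMA that additionally lies in $\mathbb{V}^m[Z^d](n)$; since $\{P_j\}$ is again built from a pulled-back hermitian $d$-tuple in $\mathbb{I}^d(n)$, the remaining steps of Lemma~\ref{existence_jpia} (the continuous spectral-projector functions $p_{k,j_k}$ and the appeals to Lemmas~\ref{cont_funct_calc_tool} and~\ref{existence_of_almost_unit}) go through verbatim, all constants still independent of $n$ (the number of projectors is now controlled in terms of $d$ rather than $m$). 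The second substitution is the genuinely new point: the straight ``hermitian line'' segments $\ell^H$ of Theorem~\ref{main_result} must be replaced by segments that keep the joint spectrum inside the (possibly non-convex) manifold $Z^d$. For the set-up I would use compactness of $Z^d$ to fix a finite atlas $\varphi^{(1)},\ldots,\varphi^{(L)}$ of differentiable charts of $Z^d$ with convex domains, a Lebesgue number $\lambda>0$ for the open cover $\{\varphi^{(l)}(\mathrm{dom}\,\varphi^{(l)})\}_{l}$ of $Z^d$, and a common modulus of continuity $\omega$ for the finitely many maps $\varphi^{(l)}$ and $(\varphi^{(l)})^{-1}$; all of these depend on $Z^d$ only, not on $n$.

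Now fix $\varepsilon>0$. Given $\mathbf{X},\mathbf{Y}\in\mathbb{V}^m[Z^d](n)$ with $\eth(\mathbf{X},\mathbf{Y})<\delta$ for a $\delta>0$ to be chosen, assume after conjugating everything by a fixed unitary (an isometric automorphism of $\mathbb{V}^m[Z^d](n)$, so harmless for the conclusion) that $\mathbf{Y}$ is diagonal, and run the modified Lemma~\ref{existence_jpia} to obtain: a hermitian $\delta$-CPMA $\tilde{\mathbf{X}}\in\mathbb{V}^m[Z^d](n)$ of $\mathbf{X}$, so $\tilde{\mathbf{X}}$ commutes with $\mathbf{X}$ componentwise and $\eth(\mathbf{X},\tilde{\mathbf{X}})\leq\delta$; inner morphisms $\Psi=\widehat{\mathrm{Ad}[W]}$ and $\tilde\Psi=\widehat{\mathrm{Ad}[Z]}$ of $\mathbb{V}^m[Z^d](n)$ with $\Psi(\mathbf{X})$ a $m$-tuple of diagonal matrices, $\eth(\Psi(\mathbf{X}),\mathbf{Y})\leq K_m\delta$ and $\tilde\Psi(\tilde{\mathbf{X}})=\Psi(\tilde{\mathbf{X}})$; and a family $\{\psi_t\}_{t\in[0,1]}$ of inner morphisms with $\psi_0=\mathrm{id}$, $\psi_1=\tilde\Psi$ and $\eth(\psi_t(\tilde{\mathbf{X}}),\tilde{\mathbf{X}})<\nu$ for a parameter $\nu=\nu(\delta)$ with $\nu(\delta)\to0$ as $\delta\to0$, uniformly in $n$. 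The curved piece $\kappa(t):=\psi_t(\tilde{\mathbf{X}})$ needs no modification here: each $\psi_t$ is an inner $\ast$-automorphism, hence preserves joint spectra, so $\Lambda(\kappa(t))=\Lambda(\tilde{\mathbf{X}})\subseteq Z^d$ and therefore $\kappa(t)\in\mathbb{V}^m[Z^d](n)$ for every $t$.

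The work is in the replacement segments. For any pair of \emph{commuting} $m$-tuples $\mathbf{A},\mathbf{B}\in\mathbb{V}^m[Z^d](n)$ with $\sqrt m\,\eth(\mathbf{A},\mathbf{B})<\lambda$, simultaneously diagonalise $\mathbf{A}$ and $\mathbf{B}$ with rank-one joint spectral projectors $Q_1,\ldots,Q_n$; the paired joint eigenvalues $\mathbf{a}^{(i)},\mathbf{b}^{(i)}\in Z^d$ satisfy $|\mathbf{a}^{(i)}-\mathbf{b}^{(i)}|\leq\sqrt m\,\eth(\mathbf{A},\mathbf{B})<\lambda$, so they lie in a common $\varphi^{(l_i)}(\mathrm{dom}\,\varphi^{(l_i)})$, say $\mathbf{a}^{(i)}=\varphi^{(l_i)}(u^{(i)})$ and $\mathbf{b}^{(i)}=\varphi^{(l_i)}(v^{(i)})$ with $u^{(i)},v^{(i)}$ in the convex domain. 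Define
\[
\big(\ell^Z_{\mathbf{A},\mathbf{B}}(t)\big)_k:=\sum_{i=1}^n \varphi^{(l_i)}_k\big((1-t)u^{(i)}+tv^{(i)}\big)\,Q_i,\qquad 0\leq t\leq1,\ \ 1\leq k\leq m.
\]
Then $\ell^Z_{\mathbf{A},\mathbf{B}}$ is piecewise differentiable, $\ell^Z_{\mathbf{A},\mathbf{B}}(0)=\mathbf{A}$, $\ell^Z_{\mathbf{A},\mathbf{B}}(1)=\mathbf{B}$, each component is hermitian (the charts are real valued) and a contraction (the chart values lie in $[-1,1]^m$), all components commute (they are simultaneously diagonal in the $Q_i$-basis), $\Lambda(\ell^Z_{\mathbf{A},\mathbf{B}}(t))=\{\varphi^{(l_i)}((1-t)u^{(i)}+tv^{(i)})\}_i\subseteq Z^d$, and $\eth(\ell^Z_{\mathbf{A},\mathbf{B}}(t),\mathbf{A})\leq\omega(\eth(\mathbf{A},\mathbf{B}))$; hence $\ell^Z_{\mathbf{A},\mathbf{B}}(t)\in\mathbb{V}^m[Z^d](n)$. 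Each of $(\mathbf{X},\tilde{\mathbf{X}})$, $(\tilde\Psi(\tilde{\mathbf{X}}),\Psi(\mathbf{X}))$ and $(\Psi(\mathbf{X}),\mathbf{Y})$ is of this form — the tuples in each pair commute (using that $\tilde{\mathbf{X}}$ is a CPMA of $\mathbf{X}$, that conjugation by a fixed unitary preserves both commutation and the joint spectrum, and that $\Psi(\mathbf{X})$ and $\mathbf{Y}$ are both diagonal) and their $\eth$-distances are at most $(K_m+2)\delta$, so for small $\delta$ they meet the hypothesis. Concatenating exactly as in Theorem~\ref{main_result},
\[
\gamma=\big((\ell^Z_{\mathbf{X},\tilde{\mathbf{X}}}\circledast\kappa)\circledast\ell^Z_{\tilde\Psi(\tilde{\mathbf{X}}),\Psi(\mathbf{X})}\big)\circledast\ell^Z_{\Psi(\mathbf{X}),\mathbf{Y}},
\]
gives a piecewise differentiable $\gamma\in C([0,1],\mathbb{V}^m[Z^d](n))$ with $\gamma(0)=\mathbf{X}$ and $\gamma(1)=\mathbf{Y}$; and by the triangle inequality $\eth(\gamma(t),\mathbf{X})$ is bounded by a sum of terms of the form $\nu(\delta)$, $\mathrm{const}\cdot\delta$ and $\omega(\mathrm{const}\cdot\delta)$, all tending to $0$ as $\delta\to0$, so a $\delta$ depending only on $\varepsilon$, $m$ and $Z^d$ forces $\eth(\gamma(t),\mathbf{X})<\varepsilon$. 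Thus $\mathbf{X}\leadsto_{\varepsilon}\mathbf{Y}$ relative to $B_\eth(\mathbf{X},\varepsilon)\cap\mathbb{V}^m[Z^d](n)$, i.e.\ $\mathbb{V}^m[Z^d](n)$ is uniformly locally piecewise differentiably connected.

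The step I expect to be the main obstacle is making the manifold-segment construction \emph{uniform}: extracting from compactness of $Z^d$ a single Lebesgue number $\lambda$ and a single modulus of continuity $\omega$ valid for the whole finite atlas, and checking that $\lambda$, $\omega$, the Lipschitz data of the finitely many charts and the constant $K_m$ of Lemma~\ref{Joint_spectral_variation_inequality_2} assemble into one $\delta>0$ that works for all $n$ at once — the remaining manipulations are the same concatenation-and-triangle-inequality bookkeeping as in Theorem~\ref{main_result}. A secondary point requiring verification is that the proof of Lemma~\ref{existence_jpia} really survives substituting Lemma~\ref{Existence_of_mD_PMA_manifolds} for Lemma~\ref{Existence_of_mD_PMA}; this holds because Lemma~\ref{Existence_of_mD_PMA_manifolds} supplies, besides the $\mathbb{V}^m[Z^d](n)$-valued approximant, exactly the kind of $\delta$-projective decomposition (built on a pulled-back $d$-tuple) that the argument of Lemma~\ref{existence_jpia} manipulates.
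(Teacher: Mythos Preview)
Your proposal is correct and follows essentially the same architecture as the paper's proof: both substitute Lemma~\ref{Existence_of_mD_PMA_manifolds} for Lemma~\ref{Existence_of_mD_PMA} inside the argument of Lemma~\ref{existence_jpia}, keep the isospectral piece $\kappa(t)=\psi_t(\tilde{\mathbf{X}})$ unchanged, replace the straight hermitian segments $\ell^H$ by chart-parametrised segments living in $\mathbb{V}^m[Z^d](n)$, and concatenate in the same four-piece pattern. The only noteworthy difference is packaging: the paper builds its segment $\ell^V_{\mathbf{W},\mathbf{Z}}$ on the $\delta$-projective decomposition $\{P_j\}_{j\leq N_\delta}$ supplied by Lemma~\ref{Existence_of_mD_PMA_manifolds} and appeals to an unspecified ``function $\nu$ corresponding to the parametrization'', whereas you work with rank-one joint spectral projectors and make the uniformity explicit via a Lebesgue number $\lambda$ and a common modulus of continuity $\omega$ for a fixed finite atlas --- this is arguably cleaner and makes the $n$-independence of $\delta$ more transparent, but it is the same idea.
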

\begin{proof}
As a consequence of Lemma \ref{Existence_of_mD_PMA_manifolds} and Remarks \ref{inner_morphisms_remark} and \ref{aproximate_joint_isospectral_remark}, by a similar argument to the one implemented in the proof of Lemma \ref{existence_jpia} we will have that given $\nu_1>0$, there is $\delta>0$, such that for any two $m$-tuples $\mathbf{U},\mathbf{V}\in \mathbb{V}^m[Z^d](n)$ that satisfy the constraint $\eth(\mathbf{U},\mathbf{V})\leq \delta$, there exist: a $\delta$-CPMA $\tilde{\mathbf{U}}\in \mathbb{V}^m[Z^d](n)$ of $\mathbf{U}$, two inner morphisms 
$\Psi$ and $\tilde{\Psi}$ of $\mathbb{V}^m[Z^d](n)$, and a family $\{\psi_t\}_{t\in[0,1]}$ of morphisms of $\mathbb{V}^m[Z^d](n)$ such that $\tilde{\Psi}(\tilde{\mathbf{U}})=\Psi(\tilde{\mathbf{U}})$, $\tilde{\Psi}(\mathbf{\tilde{U}})$ is a $\delta$-CPMA of $\Psi(\mathbf{U})$, $\Psi(\mathbf{U})$ is a normal $\nu_1$-CPMA of $\mathbf{V}$ and 
\begin{equation}
\eth(\tilde{\mathbf{U}},\tilde{\Psi}(\tilde{\mathbf{U}}))<\nu_1.
\label{unitary_controlled_curved_inequality}
\end{equation}
Moreover, the family of morphisms $\{\psi_t\}_{t\in[0,1]}$ of $\mathbb{D}^m(n)$ satisfies the following conditions.
\begin{equation}
\left\{
\begin{array}{l}
\psi_0=\mathrm{id}_{\mathbb{V}^m[Z^d](n)},\: \: \psi_1=\tilde{\Psi},\\
\eth(\tilde{\mathbf{U}},\psi_t(\tilde{\mathbf{U}}))<\nu_1, 0\leq t\leq 1\\
\psi_t(\tilde{\mathbf{U}})[k] \psi_t(\tilde{\mathbf{U}})[j]=\psi_t(\tilde{\mathbf{U}})[j] \psi_t(\tilde{\mathbf{U}})[k], 1\leq j,k\leq m\\
\Lambda(\psi_t(\tilde{\mathbf{U}}))\subseteq Z^d, 0\leq t\leq 1
\end{array}
\right.
\label{unitary_curved_path_conditions}
\end{equation}
Given any two $m$-tuples in $\mathbf{W}=(W_1,\ldots,W_m)$ and $\mathbf{Z}=(Z_1,\ldots,Z_m)$ in $\mathbb{V}^m[Z^d](n)$ such that $Z_jW_k=W_kZ_j$ for each $1\leq j,k\leq m$, let us set 
$r:=\eth(\mathbf{W},\mathbf{Z})$ and let us write $\ell^V_{\mathbf{W},\mathbf{Z}}$ to denote the path in $C([0,1],M_n^m)$ defined coordinate wise by the expressions:
\begin{equation}
\ell^{V,k,j}_{\mathbf{W},\mathbf{Z}}(t)=\varphi_{k,j}((1-t)H_{W,1}+tH_{Z,1},\ldots,(1-t)H_{W,d}+tH_{Z,d})P_j, \:\: 0\leq t\leq 1,
\label{unitary_line_definition}
\end{equation}
with $1\leq k\leq m$. Here for each $1\leq j\leq d$, $H_{W,j},H_{Z,j}$ are the hermitian matrices described by Lemma \ref{Existence_of_mD_PMA_manifolds}, which satisfy the relations $W_k=\sum_j\varphi_{k,j}(H_{W,1},\ldots,H_{W,d})P_j$, $Z_k=\sum_j\varphi_{k,j}(H_{Z,1},\ldots,H_{Z,d})P_j$ and $\eth(\mathbf{H}_{W},\mathbf{H}_{Z})<\nu(r)$, and $\nu$ is the function corresponding to the parametrization of the components of \eqref{unitary_line_definition}. It can be seen that 
\begin{equation}
\left\{
\begin{array}{l}
\ell^V_{\mathbf{W},\mathbf{Z}}(0)=\mathbf{W}, \ell^V_{\mathbf{H},\mathbf{K}}(1)=\mathbf{Z}\\
\eth(\ell^V_{\mathbf{W},\mathbf{Z}}(t),\ell^V_{\mathbf{W},\mathbf{Z}}(s))\leq \nu(\eth(\ell^H_{\mathbf{H}_{\mathbf{W}},\mathbf{H}_{\mathbf{Z}}}(t),\ell^H_{\mathbf{H}_{\mathbf{W}},\mathbf{H}_{\mathbf{Z}}}(s))), 0\leq s,t\leq 1,
\end{array}
\right.
\label{unitary_lines_conditions}
\end{equation}
with $\ell^H_{\mathbf{H}_{\mathbf{W}},\mathbf{H}_{\mathbf{Z}}}$ defined by \eqref{hermitian_line_definition}, and for each $0\leq t\leq 1$ we have that the $m$-tuple $(\ell^{V,1}_{\mathbf{W},\mathbf{Z}}(t),\ldots,\ell^{V,m}_{\mathbf{W},\mathbf{Z}}(t))$, with $\ell^{V,k}_{\mathbf{W},\mathbf{Z}}(t)=\sum_j\varphi_{k,j}((1-t)H_{W,1}+tH_{Z,1},\ldots,(1-t)H_{W,d}+tH_{Z,d})P_j$, $1\leq k\leq m$, belongs to $\mathbb{V}^m[Z^d](n)$. By the previous facts we will have that $\ell^V_{\mathbf{W},\mathbf{Z}}(t)\in \mathbb{V}^m[Z^d](n)$ for each $0\leq t\leq 1$. Let us write $\kappa$ to denote the path defined by the following expression
\begin{equation}
\kappa(t)=\psi_t(\tilde{\mathbf{U}}), \:\: 0\leq t\leq 1.
\label{unitary_particular_curved_path_definition}
\end{equation}
By Lemma \ref{existence_jpia} and \eqref{unitary_curved_path_conditions} we will have that $\kappa(0)=\tilde{\mathbf{U}}$, $\kappa(1)=\tilde{\Psi}(\tilde{\mathbf{U}})$, $\kappa(t)\in \mathbb{V}^m[Z^d](n)$ and $\eth(\kappa(t),\tilde{\mathbf{U}})=\eth(\kappa(t),\kappa(0))<\nu_1$ for each $t\in [0,1]$.

Let us set $\varepsilon:=2(\nu_1+\max\{\max_{x\in [0,\delta]}\nu(x),\delta\})$. By the arguments presented in the previous paragraphs, we will have that the path $\gamma\in C([0,1],M_n^m)$ 
defined by the expression
\begin{equation*}
\gamma=((\ell_{\mathbf{U},\tilde{\mathbf{U}}}\circledast \kappa)\circledast  \ell_{\tilde{\Psi}(\tilde{\mathbf{U}}),\Psi(\mathbf{U})})\circledast \ell_{\Psi(\mathbf{U}),\mathbf{V}},
\label{definition_of_unitary_solvent_path}
\end{equation*}
satisfies the conditions
\begin{equation}
\left\{
\begin{array}{l}
\gamma(0)=\mathbf{U}, \:\: \gamma(1)=\mathbf{V},\\
\gamma(t)\in \mathbb{V}^m[Z^d](n), \:\: 0\leq t\leq 1,\\
\eth(\gamma(t),\mathbf{U})<\varepsilon, \:\: 0\leq t\leq 1.
\end{array}
\right.
\label{local_unitary_path_connectivity_conditions}
\end{equation}
By \eqref{local_path_connectivity_conditions} we have that $\mathbf{U}\leadsto_{\varepsilon(\delta)} \mathbf{V}$ relative to $N_\eth(\mathbf{U},\varepsilon)\cap \mathbb{V}^m[Z^d](n)$. This completes the proof.
\end{proof}

\subsection{Graphical analogies of continuous matrix deformations}

Given a $d$-mainfold $Z^d\subseteq [-1,1]^m$, the effect of an approximate joint isospectral interpolant $\psi\in C([0,1],M_n^m)$ on a section of an arbitrary compact matrix manifold 
$\mathbb{V}^m[Z^d](n)\subseteq \mathbb{I}^m(n)$ in $M_n$ can be interpreted as a matrix representation 
of a continuous deformation of $Z^d$, like the one described in Figure \ref{sphere_isospectral_deformation} in the particular case when $Z^d=\mathbb{S}^2$.

\begin{figure}[!htb]
\centering
 \includegraphics[scale=0.5]{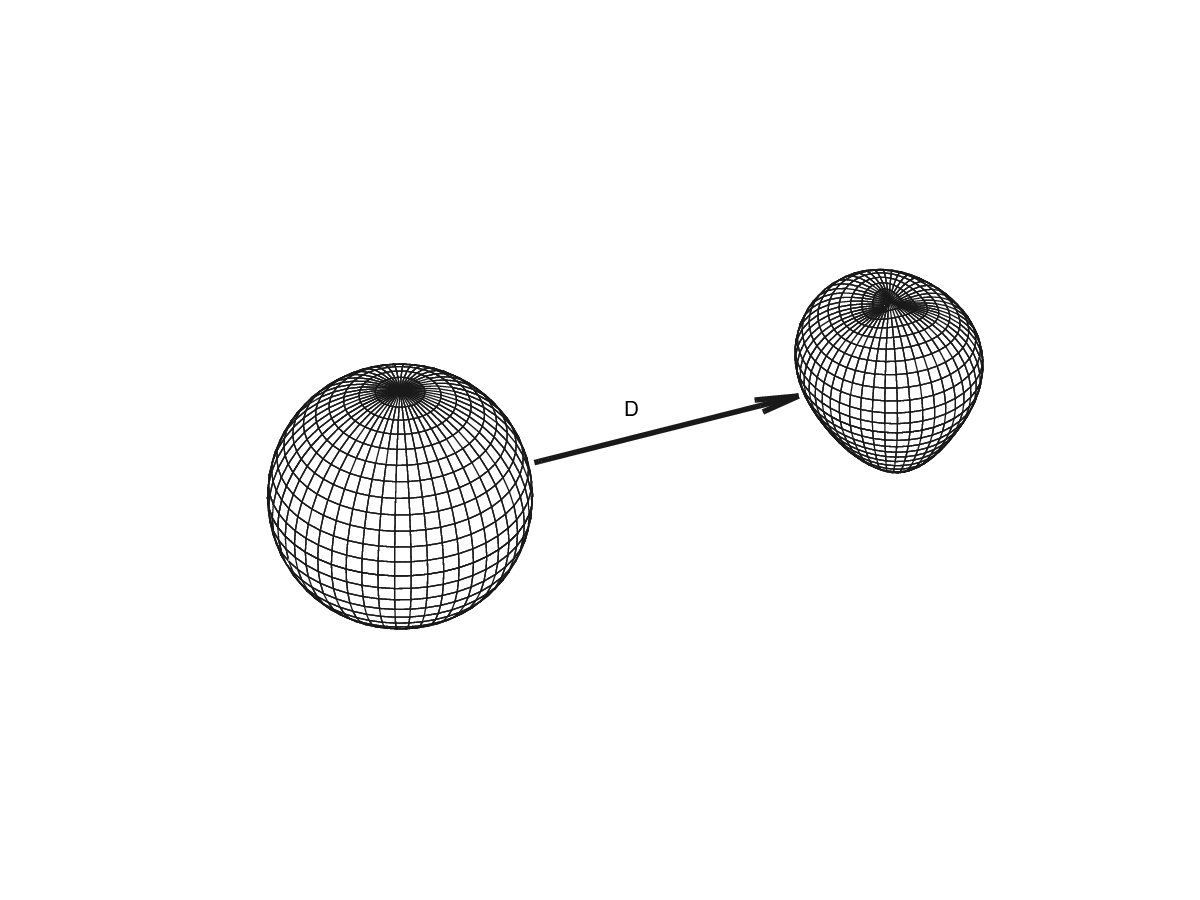}
 \caption{Graphical illustration of an approximate isospectral deformation in $V^3[\mathbb{S}^2](n)$.}
 \label{sphere_isospectral_deformation}
 \end{figure}
 
Given a fixed but arbitrary $3$-tuple $\mathbf{X}\in V^3[\mathbb{S}^2](n)\subseteq \mathbb{I}^3(n)$, we can think of the first homeomorphic copy of $\mathbb{S}^2$ on the left in Figure \ref{sphere_isospectral_deformation} as the section of $V^3[\mathbb{S}^2](n)$ corresponding to the joint spectra $\Lambda(\hat{\mathscr{D}}(V^3[\mathbb{S}^2](n)\cap C^\ast(\mathbf{X})))$ (the image of the full pinching operator extended to $3$-tuples in $V^3[\mathbb{S}^2](n)$) with respect to a basis in which $\mathbf{X}$ is a $m$-tuple of diagonal matrices, while the second homeomorphic copy of $\mathbb{S}^2$ on the right in Figure \ref{sphere_isospectral_deformation} can be interpreted as the section of $V^3[\mathbb{S}^2](n)$ corresponding to the joint spectra $\Lambda(\hat{\mathscr{D}}(V^3[\mathbb{S}^2](n)\cap C^\ast(\psi_1(\mathbf{X}))))$ on the same basis in which $\mathbf{X}$ is a $m$-tuple of diagonal matrices.

Let us consider two $m$-tuples $\mathbf{X}$ and $\mathbf{Y}$ in a certain matrix variety $Z^m\subseteq M_n^m$. We can think of each path $X_j\leadsto_{\varepsilon(\delta)} \tilde{X}_j$ relative 
to $N_\eth(X_j,\varepsilon)$, as a noncommutative analogy of the family of paths ({\bf pseudospectral-links}) connecting the point sets determined 
by the spectra $\sigma(\mathscr{D}(X_j))$ and 
$\sigma(\tilde{X}_j)$ relative to the embedding of $\disk\times [0,1]$ in $[-1,1]^2\times [0,1]$ (where the embedding is induced by the mapping $\disk\to B(0,1)\subseteq [-1,1]^2$). In a similar way we can interpret the induced paths 
$\mathbf{X}\leadsto_{\varepsilon(\delta)} \tilde{\mathbf{X}}$ as noncommutative analogies of the links connecting the joint spectra $\sigma(\mathbf{X})$ and 
$\sigma(\mathbf{\tilde{X}})$ relative to $[-1,1]^{2n}\times [0,1]$. An illustration of these analogies are presented in F.\ref{pseudospectral_links}, F.\ref{pseudospectral_links_clusters} and F.\ref{sphereical_pseudospectral_paths}.

\begin{figure}[!htb]
\centering
 \includegraphics[scale=0.4]{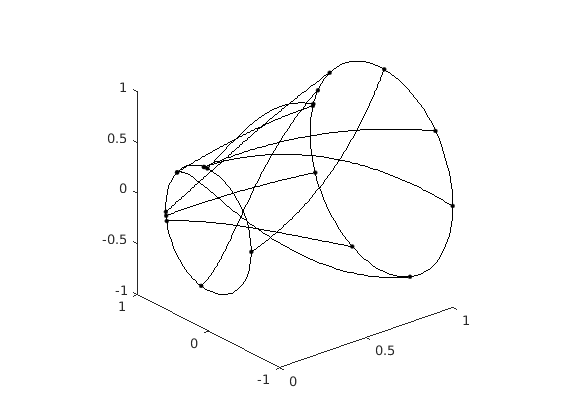}
 \caption{Graphical illustration of a the pseudospectral-links in $\disk\times [0,1]$ corresponding to a matrix path that connects the spectra of two normal contractions $X$ and $Y$ (with $Y$ diagonal) in $M_{10}$ with $\sigma(\mathscr{D}(X))\subseteq \partial B(0,3/5)$ and $\sigma(\mathscr{D}(Y))=\sigma(Y)\subseteq \TT$.}
 \label{pseudospectral_links}
 \end{figure}

\begin{figure}[!htb]
\centering
 \includegraphics[scale=0.4]{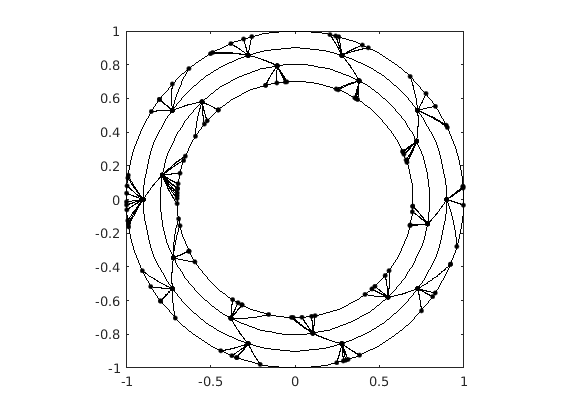}
 \caption{Graphical illustration of a the pseudospectral-links in $\disk$ corresponding to a matrix path that connects the spectra of two normal contractions $X$ and $Y$ (with $Y$ diagonal) in $M_{55}$ with $\sigma(\mathscr{D}(X))\subseteq \partial B(0,7/10)$ and $\sigma(\mathscr{D}(Y))=\sigma(Y)\subseteq \TT$.}
 \label{pseudospectral_links_clusters}
 \end{figure}
 
 \begin{figure}[!htb]
\centering
 \includegraphics[scale=0.4]{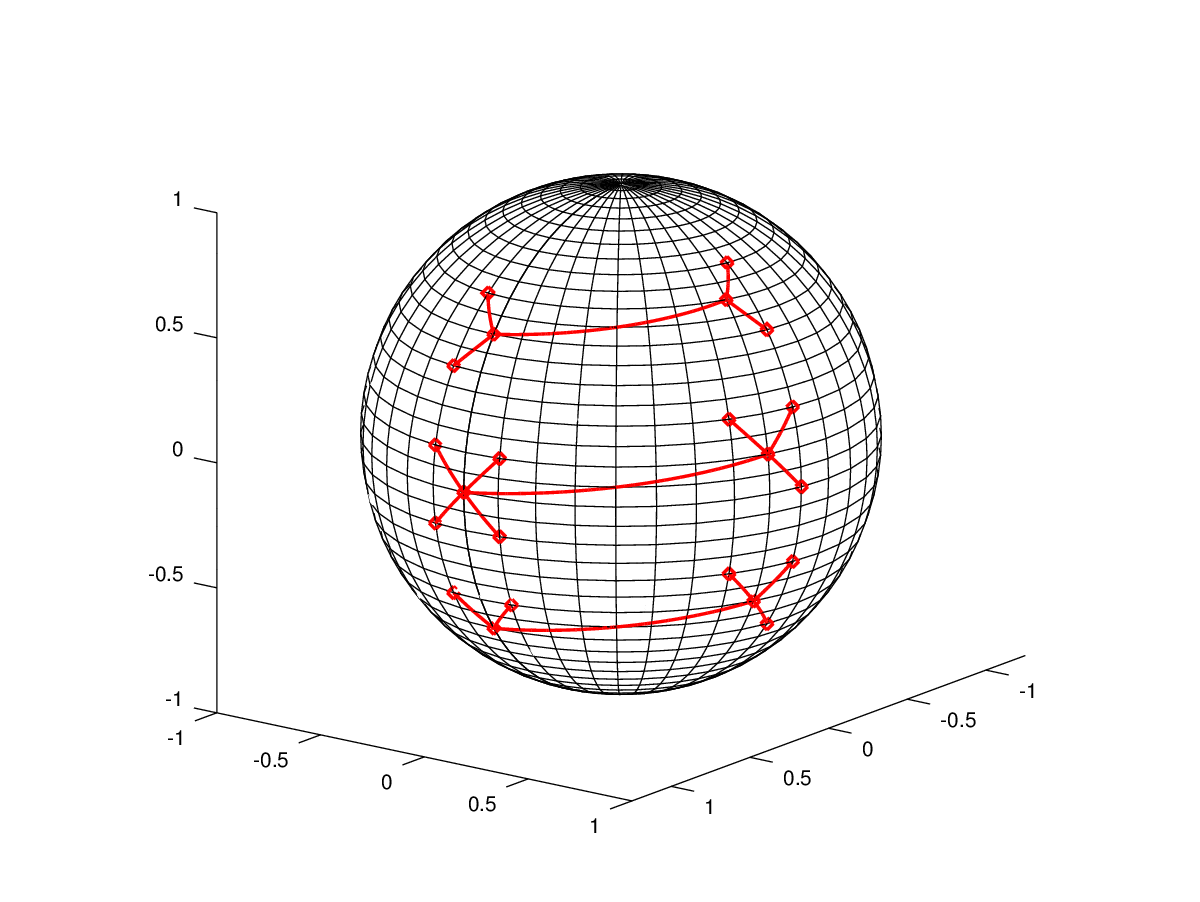}
 \caption{Graphical illustration of a the pseudospectral-paths in $\mathbb{S}^2$ corresponding to a matrix path that connects the spectra of two triples of pairwise commuting hermitian contractions $\mathbf{X}=(X_1,X_2,X_3)$ and $\mathbf{Y}=(Y_2,Y_2,Y_3)$ in $\mathbb{V}^3[\mathbb{S}^2](7)$.}
 \label{sphereical_pseudospectral_paths}
 \end{figure}
 
 The paths described in F.\ref{pseudospectral_links_clusters} and F.\ref{sphereical_pseudospectral_paths} provide a graphical illustration of the ideas behind the  {\em pseudospectral clustering} technique implemented in the proofs of Theorems \ref{main_result} and \ref{main_generic_result}.

 \pagebreak

\section{Applications}
\label{applications}

\subsection{Local Connectivity of some particular Compatc differentiable Matrix Manifolds}

Let us consider the semialgebraic matrix varieties $\mathbb{T}^m(n)$ and $\mathbb{S}^{m-1}(n)$ defined as follows.
\begin{equation}
\mathbb{T}^m(n)=\left\{\mathbf{U}\in \mathbb{D}^2(n)\:|\: U_k U_k^\ast=\mathbf{1}_n, 1\leq j,k\leq m\right\}
\label{definition_Tm_n}
\end{equation}

\begin{equation}
\mathbb{S}^{m-1}(n)=\left\{\mathbf{H}\in \mathbb{I}^{m}(n)\:\left|\: \sum_{k=1}^m H_k^2=\mathbf{1}_n\right.\right\}
\label{definition_Sm_n}
\end{equation}

\begin{lemma}
\label{secondary_corollary}
The matrix semialgebraic variety $\mathbb{T}^m(n)$ is {\bf ULPDC}.
\end{lemma}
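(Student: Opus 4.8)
The plan is to reduce the statement to Theorem \ref{main_generic_result} by identifying $\mathbb{T}^m(n)$, via the partition and juncture operations of \S\ref{notation}, with the induced differentiable matrix manifold $\mathbb{V}^{2m}[\mathbb{T}^m](n)$, where $\mathbb{T}^m$ is regarded as the compact differentiable $m$-manifold $\{(a_1,\ldots,a_m,b_1,\ldots,b_m)\in\RR^{2m}\mid a_j^2+b_j^2=1,\ 1\leq j\leq m\}\subseteq[-1,1]^{2m}$ (a reordering of coordinates turns this into the standard embedding of $\TT[m]$, so it is indeed a compact differentiable $m$-manifold sitting in $[-1,1]^{2m}$). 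Concretely, for $\mathbf{U}=(U_1,\ldots,U_m)\in\mathbb{T}^m(n)$ put $\pi(\mathbf{U})=(Re(U_1),\ldots,Re(U_m),Im(U_1),\ldots,Im(U_m))$, and for $\mathbf{H}=(H_1,\ldots,H_{2m})$ put $\upsilon(\mathbf{H})=(H_1+\mathbf{i}H_{m+1},\ldots,H_m+\mathbf{i}H_{2m})$.

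First I would check that $\pi$ maps $\mathbb{T}^m(n)$ into $\mathbb{V}^{2m}[\mathbb{T}^m](n)$ and $\upsilon$ maps $\mathbb{V}^{2m}[\mathbb{T}^m](n)$ into $\mathbb{T}^m(n)$, and that (by \eqref{hermitian_partition_relations}, a direct computation) $\upsilon\circ\pi=\mathrm{id}_{\mathbb{T}^m(n)}$ and $\pi\circ\upsilon=\mathrm{id}_{\mathbb{V}^{2m}[\mathbb{T}^m](n)}$. An $m$-tuple of pairwise commuting unitaries is simultaneously unitarily diagonalizable, so the components of $\pi(\mathbf{U})$ are pairwise commuting hermitian contractions, and its joint spectrum consists of the points $(Re\,\lambda_1^{(k)},\ldots,Re\,\lambda_m^{(k)},Im\,\lambda_1^{(k)},\ldots,Im\,\lambda_m^{(k)})$ formed from the joint eigenvalues $\lambda_j^{(k)}$ of $\mathbf{U}$; since $|\lambda_j^{(k)}|=1$ these points lie in $\mathbb{T}^m$, so $\pi(\mathbf{U})\in\mathbb{V}^{2m}[\mathbb{T}^m](n)$. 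Conversely, if $\mathbf{H}\in\mathbb{V}^{2m}[\mathbb{T}^m](n)$ then each $U_j:=H_j+\mathbf{i}H_{m+j}$ is normal (being a function of the commuting normal family $\mathbf{H}$), the $U_j$ pairwise commute, and $U_jU_j^\ast=H_j^2+H_{m+j}^2$; by Borel functional calculus on the commutative von Neumann algebra generated by $\mathbf{H}$ the constraint $\Lambda(\mathbf{H})\subseteq\mathbb{T}^m$ forces $H_j^2+H_{m+j}^2=\mathbf{1}_n$, whence each $U_j$ is unitary and $\upsilon(\mathbf{H})\in\mathbb{T}^m(n)$.

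Next I would record the metric comparisons. From $Re(X)-Re(Y)=\tfrac{1}{2}((X-Y)+(X-Y)^\ast)$ and the analogous identity for $Im$ one gets $\eth(\pi(\mathbf{U}),\pi(\mathbf{V}))\leq\eth(\mathbf{U},\mathbf{V})$, while from $U_j-V_j=(Re(U_j)-Re(V_j))+\mathbf{i}(Im(U_j)-Im(V_j))$ one gets $\eth(\upsilon(\mathbf{H}),\upsilon(\mathbf{K}))\leq 2\,\eth(\mathbf{H},\mathbf{K})$; moreover $\pi$ and $\upsilon$ are $\RR$-linear in the matrix entries, hence map piecewise differentiable paths to piecewise differentiable paths. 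Now, given $\varepsilon>0$, apply Theorem \ref{main_generic_result} to $Z^d=\mathbb{T}^m\subseteq[-1,1]^{2m}$ with $\varepsilon$ replaced by $\varepsilon/2$, obtaining $\delta>0$ independent of $n$ such that any two $(\delta,\eth)$-close tuples in $\mathbb{V}^{2m}[\mathbb{T}^m](n)$ are joined by a piecewise differentiable path lying in $B_\eth(\cdot,\varepsilon/2)\cap\mathbb{V}^{2m}[\mathbb{T}^m](n)$. Given $\mathbf{U},\mathbf{V}\in\mathbb{T}^m(n)$ with $\eth(\mathbf{U},\mathbf{V})\leq\delta$, we have $\eth(\pi(\mathbf{U}),\pi(\mathbf{V}))\leq\delta$; if $\gamma^H$ is the resulting path from $\pi(\mathbf{U})$ to $\pi(\mathbf{V})$, set $\gamma=\upsilon\circ\gamma^H$. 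Then $\gamma$ is a piecewise differentiable path in $\mathbb{T}^m(n)$ with $\gamma(0)=\upsilon(\pi(\mathbf{U}))=\mathbf{U}$, $\gamma(1)=\mathbf{V}$, and $\eth(\gamma(t),\mathbf{U})=\eth(\upsilon(\gamma^H(t)),\upsilon(\pi(\mathbf{U})))\leq 2\,\eth(\gamma^H(t),\pi(\mathbf{U}))\leq\varepsilon$ for every $t$; hence $\mathbf{U}\leadsto_\varepsilon\mathbf{V}$ relative to $B_\eth(\mathbf{U},\varepsilon)\cap\mathbb{T}^m(n)$. Since $\delta$ does not depend on $n$, this proves that $\mathbb{T}^m(n)$ is \textbf{ULPDC}.

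The only genuinely nontrivial point is the claim (in the second paragraph) that $\upsilon$ lands in $\mathbb{T}^m(n)$, i.e. that the joint-spectrum constraint $\Lambda(\mathbf{H})\subseteq\mathbb{T}^m$ actually forces each $U_j$ to be unitary; everything else is bookkeeping about the partition/juncture correspondence and the two Lipschitz constants. An alternative route would be to mimic the proof of Corollary \ref{main_corollary} directly — passing to hermitian partitions and applying Theorem \ref{main_result} — but then one would have to arrange the isospectral interpolant so that its juncture remains a tuple of unitaries, which is precisely what is supplied for free by Theorem \ref{main_generic_result} applied to $Z^d=\mathbb{T}^m$.
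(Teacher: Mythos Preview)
Your proof is correct and follows essentially the same route as the paper: identify $\mathbb{T}^m(n)$ with $\mathbb{V}^{2m}[\mathbb{T}^m](n)$ via the partition/juncture maps, invoke Theorem \ref{main_generic_result} with $\varepsilon/2$, and transport the resulting path back by $\upsilon$. Your write-up is in fact more careful than the paper's, since you spell out the Lipschitz constants for $\pi$ and $\upsilon$ and justify why $\upsilon$ lands in $\mathbb{T}^m(n)$, whereas the paper simply asserts the one-to-one correspondence; the only cosmetic difference is your ordering of the real and imaginary parts (all reals first, matching the convention in \S\ref{notation}) versus the paper's interleaved ordering $(Re(U_1),Im(U_1),\ldots)$.
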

\begin{proof}
Let $m\in \mathbb{Z}^+$. Since $\mathbb{T}^m(n)$ is defined by \eqref{definition_Tm_n}, we will have that if for any 
$\mathbf{U}\in\mathbb{T}^m(n)$ we set 
\begin{equation}
\pi_H(\mathbf{U})=(Re(U_1),Im(U_1),\ldots,Re(U_m),Im(U_m)),
\label{matrix_homeomorphism}
\end{equation}
then $\Lambda(\pi_H(\mathbf{U}))\subseteq \mathbb{T}^m=\Pi_{j=1}^m \mathbb{S}^1\subseteq \Pi_{j=1}^m [-1,1]^2\subseteq \mathbb{R}^{2m}$. By considering the mapping $\upsilon_H:M_n^{2m}\to M_n^m, (X_1,X_2,\ldots,X_{2m-1},X_{2m})\mapsto(X_1+\mathbf{i}X_2,\ldots,X_{2m-1}+\mathbf{i}X_{2m})$, it can be seen that the mapping $\pi_H:M_n^m\to M_n^{2m}$ determined by equation \eqref{matrix_homeomorphism} induces a one to one correspondence between $\mathbb{T}^m(n)$ and $V^{2m}(\mathbb{T}^m)(n)$. Because of this correspondence, we have that any $\mathbf{U}\in \mathbb{T}^m(n)$ has a representation of the form $\mathbf{U}=\upsilon_H(\pi_H(\mathbf{U}))$. 

Given  any two $(\nu,\eth)$-close tuples $\mathbf{U},\mathbf{V}\in \mathbb{T}^m(n)$ for some given $\nu>0$, we will have that the tuples $\pi_H(\mathbf{U}),\pi_H(\mathbf{V})\in V^{2m}[\mathbb{T}^m](n)$ will be $(\nu,\eth)$-close. By Theorem \ref{main_generic_result} we have that $V^{2m}(\mathbb{T}^m)(n)$ is {\bf ULPDC}, this implies that given $\varepsilon>0$, there is $\delta>0$ such that if $\nu<\delta$, there is a piecewise differentiable path $\gamma\in C([0,1],V^{2m}[\mathbb{T}^m](n))$ that solves the connectivity problem $\pi_H(\mathbf{U})\rightsquigarrow_{\varepsilon/2} \pi_H(\mathbf{V})$ relative to $V^{2m}[\mathbb{T}^m](n)\cap B_\eth(\pi_H(\mathbf{U}),\varepsilon/2)$. By the previous facts we will have that the path $\hat{\gamma}$ defined by the equation $\hat{\gamma}(t)=\upsilon_H(\gamma(t))$ for each $t\in [0,1]$, solves the connectivity problem $\mathbf{U}\rightsquigarrow_\varepsilon \mathbf{V}$ relative to $\mathbb{T}^m(n)\cap B_\eth(\mathbf{U},\varepsilon)$. This completes the proof.
\end{proof}

\begin{lemma}
\label{third_corollary}
The matrix semialgebraic variety $\mathbb{S}^{m-1}(n)$ is {\bf ULPDC}.
\end{lemma}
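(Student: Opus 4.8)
The plan is to recognize $\mathbb{S}^{m-1}(n)$ as one of the induced differentiable matrix manifolds $\mathbb{V}^m[Z^d](n)$ of Definition \ref{induced_matrix_manifold} and then invoke Theorem \ref{main_generic_result}. I take $Z^d$ to be the Euclidean unit sphere $\mathbb{S}^{m-1}=\{x\in\RR^m:\ x_1^2+\cdots+x_m^2=1\}$, which is a compact differentiable $(m-1)$-manifold with $\mathbb{S}^{m-1}\subseteq[-1,1]^m\subseteq\RR^m$, so that $\mathbb{V}^m[\mathbb{S}^{m-1}](n)$ is a well-defined matrix manifold in the sense of the excerpt.

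The only identification that requires any argument is $\mathbb{S}^{m-1}(n)=\mathbb{V}^m[\mathbb{S}^{m-1}](n)$. Both sets consist of $m$-tuples $(H_1,\ldots,H_m)$ of pairwise commuting hermitian contractions, and the defining relations differ only in that $\mathbb{S}^{m-1}(n)$ imposes the operator identity $\sum_{k=1}^m H_k^2=\mathbf{1}_n$, while $\mathbb{V}^m[\mathbb{S}^{m-1}](n)$ imposes the joint-spectral constraint $\Lambda(H_1,\ldots,H_m)\subseteq\mathbb{S}^{m-1}$. Since the $H_k$ pairwise commute and are hermitian, they are simultaneously unitarily diagonalizable, and in a common eigenbasis $\sum_k H_k^2$ is the diagonal matrix whose $j$-th entry is $\sum_k(\Lambda^{(j)}(H_k))^2$; hence $\sum_k H_k^2=\mathbf{1}_n$ holds if and only if every joint eigenvalue $\Lambda^{(j)}(H_1,\ldots,H_m)$ lies on $\mathbb{S}^{m-1}$, which is exactly the constraint defining $\mathbb{V}^m[\mathbb{S}^{m-1}](n)$. (The contraction condition $\|H_k\|\le1$ is automatic once $\sum_j H_j^2=\mathbf{1}_n$, since then $H_k^2\le\mathbf{1}_n$.) This is the sphere analogue of the correspondence between $\mathbb{T}^m(n)$ and an induced matrix manifold used in the proof of Lemma \ref{secondary_corollary}, and in fact simpler: the generators of $\mathbb{S}^{m-1}(n)$ are already hermitian, so no passage through real and imaginary parts, and hence no juncture map, is needed.

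Once the identification is in place, Theorem \ref{main_generic_result} applied to $Z^d=\mathbb{S}^{m-1}$ states precisely that $\mathbb{V}^m[\mathbb{S}^{m-1}](n)=\mathbb{S}^{m-1}(n)$ is \textbf{ULPDC}, with the dependence of $\delta$ on $\varepsilon$ independent of $n$, which is the assertion of the lemma.

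I do not expect a substantive obstacle: all of the analytic content already resides in Lemma \ref{Existence_of_mD_PMA_manifolds} and Theorem \ref{main_generic_result}, and the points that need checking are only that $\mathbb{S}^{m-1}$ is a compact smooth submanifold of the cube with a finite atlas (standard) and the spectral reformulation of the quadratic relation above (immediate from simultaneous diagonalization). The one genuinely degenerate case is $m=1$, in which $\mathbb{S}^0=\{\pm1\}$ is $0$-dimensional and the chart machinery of Lemma \ref{Existence_of_mD_PMA_manifolds} is vacuous; there one argues directly, noting that $\mathbb{S}^0(n)$ is the set of hermitian involutions $2P-\mathbf{1}_n$ with $P$ a projection, that two $(\nu,\eth)$-close such tuples have spectral projections of equal rank once $\nu<2$, and that they are then joined within the corresponding fixed-rank Grassmannian by a short piecewise-differentiable path that remains in $\mathbb{S}^0(n)$ and within $B_\eth$ of the first tuple, with the bound on $\nu$ independent of $n$.
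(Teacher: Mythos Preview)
Your proposal is correct and follows essentially the same approach as the paper: identify $\mathbb{S}^{m-1}(n)$ with $\mathbb{V}^m[\mathbb{S}^{m-1}](n)$ via the joint spectrum and then invoke Theorem \ref{main_generic_result}. Your version is in fact more detailed than the paper's, which asserts the identification without the explicit diagonalization argument and does not single out the degenerate case $m=1$.
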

\begin{proof}
Let $m\in \mathbb{Z}^+$. Since $\mathbb{S}^{m-1}(n)$ is defined by \eqref{definition_Sm_n}, we will have that for any 
$\mathbf{H}\in\mathbb{S}^{m-1}(n)$, $\Lambda(\mathbf{H})\subseteq \mathbb{S}^{m-1}\subseteq\Pi_{j=1}^{m} [-1,1]\subseteq \mathbb{R}^{m+1}$. It can be seen that the identity mapping $\mathrm{id}_{M_n^{m}}$ induces a one to one correspondence between $\mathbb{S}^{m-1}(n)$ and $V^{m}(\mathbb{S}^{m-1})(n)$. Because of this correspondence, we have in fact that $\mathbb{S}^{m-1}(n)=V^{m}(\mathbb{S}^{m-1})(n)$. Since by Theorem \ref{main_generic_result} we have that $V^{m}(\mathbb{S}^{m-1})(n)$ is {\bf ULPDC}, this implies that $\mathbb{S}^{m-1}(n)=V^{m}(\mathbb{S}^{m-1}(n)$ is {\bf ULPDC}. This completes the proof.
\end{proof}

\subsection{Local Connectivity of Commutative $C^\ast$-Subalgebras of Matrix Algebras}
\label{C_star_matrix_connectivity}

Let us consider the universal $C^*$-algebras $C([-1,1]^m)$, $C(\TT[m])$ and $C(\TT\times [-1,1])$ described in terms of generators and relations by the expressions.

\begin{eqnarray}
C([-1,1]^m)&:=&
 C^*_1\left\langle h_1,\ldots,h_m \left| 
                     \begin{array}{l}
                        [h_j,h_k]=0\\
                       -1\leq h_j\leq 1,
                      \end{array}, 1\leq j,k\leq m
\right.\right\rangle,\nonumber\\
\label{universal_picture_of_Bm}\\
C(\TT[m])&:=&
 C^*_1\left\langle u_1,\ldots,u_m \left| 
                     \begin{array}{l}
                       \:[u_j,u_k]=0, \\
                       u_ju_j^\ast=u_j^\ast u_j= 1, 
                      \end{array}, 1\leq j,k\leq m
\right.\right\rangle,\nonumber\\
\label{universal_picture_of_Tm}\\
C(\mathbb{S}^{m-1})&:=&
 C^*_1\left\langle h_1,\ldots,h_m \left| 
                     \begin{array}{l}
                        [h_j,h_k]=0=h_j-h_j^\ast, \:\: 1\leq j,k\leq m\\
                       \sum_{j=1}^m h_j^2=1
                      \end{array}
\right.\right\rangle.\nonumber\\
\label{universal_picture_of_Sm}
\end{eqnarray}

\subsubsection{{\bf ULPDC} property for $C^\ast$-algebras}
\label{def.ulac} 

Given a universal $C^*$-algebra $A$ that can be described by the expression $A:=C^*_1\langle\NS{x}{m}, 
\mathcal{R}(x_1,\ldots,x_m)\rangle$, we say that 
$\mathrm{C^\ast Rep}(A,\Minf)$ (the set of all $C^*$-representations of $A$ in $\Minf:=\overline{\bigcup_{n\in\ZZ^+}M_n}^{\|\cdot\|}$) is uniformly locally piecewise differentiably connected or 
({\bf ULPDC}) if for 
any two $m$-tuples of normal contractions $\mathbf{X}:=(\NS{X}{m})$ and $\mathbf{Y}:=(Y_1,\ldots,Y_m)$ in 
 $M_n^m$ such that there exist $\ast$-representations $\pi_X\in \mathrm{C^\ast Rep}(A,C^\ast(\mathbf{X}))$ and $\pi_Y\in\mathrm{C^\ast Rep}(A,C^\ast(\mathbf{Y}))$ determined by the equations $\pi_X(x_j)=X_j$ and $\pi_Y(x_j)= Y_j$, $1\leq j\leq m$, we have that
 there are piecewise differentiable contractive normal matrix paths $\hat{X}^j$ that solve the problems $X_j \rightsquigarrow_{\varepsilon(\delta)} Y_j$, for some function $\varepsilon:\RR^+_0\to \mathbb{R}_0^+$, with 
 $\eth(\mathbf{X},\mathbf{Y})\leq \delta$, and if in addition there is $\pi_t\in \mathrm{C^\ast Rep}(A,C^*(\hat{X}^1(t),\ldots,\hat{X}^m(t))$ for each 
 $0\leq t\leq 1$.

\begin{theorem}
 For any integer $m\geq 1$ we have that $\Rep{C[-1,1]^m}{\Minf}$ is {\bf ULPDC}.
\end{theorem}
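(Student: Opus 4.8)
The plan is to read off the statement as a $C^\ast$-algebraic reformulation of Theorem \ref{main_result}.

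\emph{Step 1 (dictionary).} By the universal picture \eqref{universal_picture_of_Bm}, for an $m$-tuple $\mathbf{X}=(X_1,\ldots,X_m)\in M_n^m$ there exists a $\ast$-representation $\pi_X\in \mathrm{C^\ast Rep}(C([-1,1]^m),C^\ast(\mathbf{X}))$ with $\pi_X(h_j)=X_j$ for each $j$ if and only if the $X_j$ are pairwise commuting hermitian contractions, that is, $\mathbf{X}\in \mathbb{I}^m(n)$; and in that case $\pi_X$ is uniquely determined. Consequently, the hypothesis in the definition of the {\bf ULPDC} property for $\mathrm{C^\ast Rep}(C([-1,1]^m),\Minf)$ amounts exactly to requiring $\mathbf{X},\mathbf{Y}\in \mathbb{I}^m(n)$ with $\eth(\mathbf{X},\mathbf{Y})\le\delta$, while the clause asking for a $\ast$-representation $\pi_t\in \mathrm{C^\ast Rep}(C([-1,1]^m),C^\ast(\hat X^1(t),\ldots,\hat X^m(t)))$ for each $t$ amounts exactly to requiring that the matrix path $t\mapsto(\hat X^1(t),\ldots,\hat X^m(t))$ stay inside $\mathbb{I}^m(n)$.

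\emph{Step 2 (apply Theorem \ref{main_result}).} Fix $m\ge 1$ and $\varepsilon>0$. By Theorem \ref{main_result} the matrix variety $\mathbb{I}^m(n)$ is {\bf ULPDC}, so there is $\delta>0$, independent of $n$, such that any two $(\delta,\eth)$-close $m$-tuples $\mathbf{X},\mathbf{Y}\in \mathbb{I}^m(n)$ satisfy $\mathbf{X}\leadsto_{\varepsilon}\mathbf{Y}$ relative to $B_\eth(\mathbf{X},\varepsilon)\cap \mathbb{I}^m(n)$; that is, there is a piecewise differentiable path $\gamma\in C([0,1],B_\eth(\mathbf{X},\varepsilon)\cap \mathbb{I}^m(n))$ with $\gamma(0)=\mathbf{X}$ and $\gamma(1)=\mathbf{Y}$. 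Put $\hat X^j:=\gamma_j$ for $1\le j\le m$. Each $\hat X^j$ is a piecewise differentiable path of hermitian (hence normal) contractions with $\hat X^j(0)=X_j$, $\hat X^j(1)=Y_j$, and $\|\hat X^j(t)-X_j\|\le\eth(\gamma(t),\mathbf{X})<\varepsilon$ for all $t$, so $X_j\leadsto_{\varepsilon(\delta)} Y_j$ relative to $B_\eth(X_j,\varepsilon)$.

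\emph{Step 3 (conclude).} For each $t\in[0,1]$ we have $(\hat X^1(t),\ldots,\hat X^m(t))=\gamma(t)\in \mathbb{I}^m(n)$, so by Step 1 there is a (unique) $\pi_t\in \mathrm{C^\ast Rep}(C([-1,1]^m),C^\ast(\hat X^1(t),\ldots,\hat X^m(t)))$ with $\pi_t(h_j)=\hat X^j(t)$. This provides all the data demanded by the definition, so $\mathrm{C^\ast Rep}(C([-1,1]^m),\Minf)$ is {\bf ULPDC}. Since everything is inherited from Theorem \ref{main_result}, there is no substantive obstacle; the only points requiring care are keeping $\delta$ independent of $n$ (already part of Theorem \ref{main_result}) and correctly matching the two formulations of the {\bf ULPDC} property, one for matrix varieties and one for $C^\ast$-algebras.
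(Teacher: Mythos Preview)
Your proof is correct and follows essentially the same approach as the paper's own proof: identify $\ast$-representations of $C([-1,1]^m)$ with $m$-tuples in $\mathbb{I}^m(n)$ via the universal picture \eqref{universal_picture_of_Bm}, invoke Theorem \ref{main_result} to obtain the connecting path in $\mathbb{I}^m(n)$, and then use universality to recover $\pi_t$ for each $t$. You are somewhat more explicit than the paper in spelling out the dictionary in Step~1 and in flagging the $n$-independence of $\delta$, but the argument is the same.
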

\begin{proof}
Since we have that $C([-1,1]^m)$ is described by \ref{universal_picture_of_Bm}. It can be seen that 
 for any $\rho_n\in \Rep{C([-1,1]^m)}{M_n}$ the $m$-tuple $(\rho_n(h_1),\ldots,\rho_n(h_m))$ belongs to $I^m(n)$. 
 By Theorem \ref{main_result}, for any given $\varepsilon>0$, there is $\delta> 0$ such that 
 for any two elements $\rho_n,\rho'_n\in\Rep{C([-1,1]^m)}{M_n}$ that satisfy the constraint
 \[
 \eth((\rho_n(h_1),\ldots,\rho_n(h_m)),(\rho'_n(h_1),\ldots,\rho'_n(h_m)))\leq \delta,
 \] 
 there is a piecewise differentiable path $\gamma\in C([0,1],\mathbb{I}^m(n))$ that solves the problem 
 \[
 (\rho_n(h_1),\ldots,\rho_n(h_m)) \rightsquigarrow_{\varepsilon} (\rho'_n(h_1),\ldots,\rho'_n(h_m))
 \]
relative to $\mathbb{I}^m(n)\cap B_\eth(\rho_n(h_1),\ldots,\rho_n(h_m),\varepsilon)$. By universality of $C([-1,1]^m)$ we will have that there is a $\ast$-representation
\[
\pi_t:C([-1,1]^m) \to C^*(\gamma(t)[1],\ldots,\gamma(t)[m])\subseteq M_n
\]
for each $0\leq t\leq 1$. This completes the proof.
\end{proof}

\begin{theorem}
 For any integer $m\geq 1$ we have that $\Rep{C(\TT[m])}{\Minf}$ is {\bf ULPDC}.
\end{theorem}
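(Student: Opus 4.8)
The plan is to transcribe the proof of the preceding theorem for $\Rep{C([-1,1]^m)}{\Minf}$, with the matrix $m$-cube $\mathbb{I}^m(n)$ replaced throughout by the matrix torus variety $\mathbb{T}^m(n)$ of \eqref{definition_Tm_n}, and with Theorem \ref{main_result} replaced by Lemma \ref{secondary_corollary}.

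First I would record that the universal picture \eqref{universal_picture_of_Tm} of $C(\TT[m])$ has generators $u_1,\ldots,u_m$ which are pairwise commuting unitaries; hence for any $n\geq 1$ and any $\rho_n\in\Rep{C(\TT[m])}{M_n}$ the $m$-tuple $\mathbf{U}:=(\rho_n(u_1),\ldots,\rho_n(u_m))$ is a tuple of pairwise commuting unitary contractions and therefore lies in $\mathbb{T}^m(n)$. Conversely, by universality of \eqref{universal_picture_of_Tm}, every $m$-tuple in $\mathbb{T}^m(n)$ arises this way, so $\Rep{C(\TT[m])}{M_n}$ is in natural bijection with $\mathbb{T}^m(n)$.

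Next, given $\varepsilon>0$, I would invoke Lemma \ref{secondary_corollary}: since $\mathbb{T}^m(n)$ is \textbf{ULPDC}, there is $\delta>0$ independent of $n$ such that for any two $\rho_n,\rho_n'\in\Rep{C(\TT[m])}{M_n}$ with $\eth(\mathbf{U},\mathbf{U}')\leq\delta$, where $\mathbf{U}':=(\rho_n'(u_1),\ldots,\rho_n'(u_m))$, there is a piecewise differentiable path $\gamma\in C([0,1],\mathbb{T}^m(n))$ solving $\mathbf{U}\rightsquigarrow_{\varepsilon}\mathbf{U}'$ relative to $\mathbb{T}^m(n)\cap B_\eth(\mathbf{U},\varepsilon)$. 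The component paths $\hat{X}^j(t):=\gamma(t)[j]$ are then contractive normal (indeed unitary) matrix paths with the required endpoints, and since each $\gamma(t)$ consists of pairwise commuting unitaries, universality of $C(\TT[m])$ furnishes a $\ast$-representation $\pi_t:C(\TT[m])\to C^*(\gamma(t)[1],\ldots,\gamma(t)[m])\subseteq M_n$ with $\pi_t(u_j)=\gamma(t)[j]$ for each $0\leq t\leq 1$. This verifies the definition of \textbf{ULPDC} for $\Rep{C(\TT[m])}{\Minf}$.

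The one substantive point is the one already discharged inside Lemma \ref{secondary_corollary}, via the correspondence $\pi_H,\upsilon_H$ between $\mathbb{T}^m(n)$ and the hermitian matrix manifold $\mathbb{V}^{2m}[\mathbb{T}^m](n)$ together with an appeal to Theorem \ref{main_generic_result}: the interpolating path must not merely stay near $\mathbb{T}^m(n)$ in operator norm but must preserve unitarity and mutual commutativity at every instant, so that each $\gamma(t)$ is genuinely an element of $\mathbb{T}^m(n)$ rather than of a small operator-norm neighborhood of it in $M_n^m$. Granting this, the remainder is a routine copy of the cube case, so I would not belabor it.
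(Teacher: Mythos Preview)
Your proposal is correct and follows essentially the same approach as the paper's own proof: identify $\Rep{C(\TT[m])}{M_n}$ with $\mathbb{T}^m(n)$ via the universal picture \eqref{universal_picture_of_Tm}, invoke Lemma \ref{secondary_corollary} to obtain the piecewise differentiable path in $\mathbb{T}^m(n)\cap B_\eth(\mathbf{U},\varepsilon)$, and use universality of $C(\TT[m])$ to produce $\pi_t$ at each instant. Your additional remarks about the bijection and about what is really being used inside Lemma \ref{secondary_corollary} are accurate elaborations, not deviations.
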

\begin{proof}
Since we have that $C(\TT[m])$ is described by \ref{universal_picture_of_Tm}. It can be seen that 
 for any $\rho_n\in \Rep{C(\TT[m])}{M_n}$ the $m$-tuple $(\rho_n(u_1),\ldots,\rho_n(u_m))$ belongs to $\mathbb{T}^m(n)$. 
 By Lemma \ref{secondary_corollary}, for any given $\varepsilon>0$, there is $\delta> 0$ such that 
 for any two elements $\rho_n,\rho'_n\in\Rep{C(\TT[m])}{M_n}$ that satisfy the constraint
 \[
 \eth((\rho_n(u_1),\ldots,\rho_n(u_m)),(\rho'_n(u_1),\ldots,\rho'_n(u_m)))\leq \delta,
 \] 
 there is a piecewise differentiable path $\gamma\in C([0,1],\mathbb{T}^m(n))$ that solves the problem
 \[
  (\rho_n(u_1),\ldots,\rho_n(u_m)) \rightsquigarrow_{\varepsilon} (\rho'_n(u_1),\ldots,\rho'_n(u_m)),
 \]
relative to $\mathbb{T}^m(n)\cap B_\eth(\rho_n(u_1),\ldots,\rho_n(u_m),\varepsilon)$. By universality of $C(\TT[m])$ we will have that there is a $\ast$-representation
\[
\pi_t:C(\TT[m]) \to C^*(\gamma(t)[1],\ldots,\gamma(t)[m])\subseteq M_n
\]
for each $0\leq t\leq 1$. This completes the proof.
\end{proof}

\begin{theorem}
For any integer $m\geq 1$ we have that $\Rep{C(\mathbb{S}^{m-1})}{\Minf}$ is {\bf ULPDC}.
\end{theorem}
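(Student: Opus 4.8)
The plan is to follow the template of the two preceding theorems, substituting Lemma \ref{third_corollary} (the \textbf{ULPDC} property of the matrix semialgebraic variety $\mathbb{S}^{m-1}(n)$) for Theorem \ref{main_result} and Lemma \ref{secondary_corollary}. First I would note that the universal picture \eqref{universal_picture_of_Sm} forces any $\ast$-representation $\rho_n\in \Rep{C(\mathbb{S}^{m-1})}{M_n}$ to send the generators $h_1,\ldots,h_m$ to an $m$-tuple $(\rho_n(h_1),\ldots,\rho_n(h_m))$ of pairwise commuting hermitian contractions with $\sum_{j=1}^m \rho_n(h_j)^2=\mathbf{1}_n$, which is exactly the defining condition of $\mathbb{S}^{m-1}(n)$ in \eqref{definition_Sm_n}. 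Conversely, by universality of $C(\mathbb{S}^{m-1})$, every element of $\mathbb{S}^{m-1}(n)$ is the image tuple of such a $\rho_n$; hence $\rho_n\mapsto(\rho_n(h_1),\ldots,\rho_n(h_m))$ is a bijection between $\Rep{C(\mathbb{S}^{m-1})}{M_n}$ and $\mathbb{S}^{m-1}(n)$.

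Next, given $\varepsilon>0$, I would apply Lemma \ref{third_corollary} to produce $\delta>0$, independent of $n$, such that any two $m$-tuples $\mathbf{H},\mathbf{H}'\in \mathbb{S}^{m-1}(n)$ with $\eth(\mathbf{H},\mathbf{H}')\leq\delta$ admit a piecewise differentiable path $\gamma\in C([0,1],\mathbb{S}^{m-1}(n))$ with $\gamma(0)=\mathbf{H}$, $\gamma(1)=\mathbf{H}'$ and $\gamma(t)\in B_\eth(\mathbf{H},\varepsilon)\cap\mathbb{S}^{m-1}(n)$ for all $t\in[0,1]$. Specializing to $\mathbf{H}=(\rho_n(h_j))$ and $\mathbf{H}'=(\rho'_n(h_j))$ for $(\delta,\eth)$-close representations $\rho_n,\rho'_n\in \Rep{C(\mathbb{S}^{m-1})}{M_n}$, the components $\gamma_j$ are contractive hermitian (hence normal) matrix paths solving $\rho_n(h_j)\rightsquigarrow_{\varepsilon}\rho'_n(h_j)$ relative to $B_\eth(\rho_n(h_j),\varepsilon)$, as required by the definition of \textbf{ULPDC} for $C^\ast$-algebras.

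Finally, for each $t\in[0,1]$ the tuple $\gamma(t)\in\mathbb{S}^{m-1}(n)$ consists of pairwise commuting hermitian contractions satisfying the sphere relation, so universality of $C(\mathbb{S}^{m-1})$ yields a $\ast$-representation $\pi_t:C(\mathbb{S}^{m-1})\to C^\ast(\gamma(t)[1],\ldots,\gamma(t)[m])\subseteq M_n$ with $\pi_t(h_j)=\gamma_j(t)$, completing the verification. Since $\delta$ and $\varepsilon$ are independent of $n$ and $\Minf=\overline{\bigcup_{n\in\ZZ^+}M_n}^{\|\cdot\|}$, this gives the \textbf{ULPDC} property for $\Rep{C(\mathbb{S}^{m-1})}{\Minf}$. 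I expect no serious obstacle here; the one point meriting care is that the path supplied by Lemma \ref{third_corollary} genuinely stays inside the sphere relation rather than merely in $\mathbb{I}^m(n)$, but this is already guaranteed by the identification $\mathbb{S}^{m-1}(n)=\mathbb{V}^m[\mathbb{S}^{m-1}](n)$ used in the proof of that lemma, so the argument is essentially a transcription of the two preceding proofs.
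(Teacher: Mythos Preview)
Your proposal is correct and follows essentially the same approach as the paper: identify $\Rep{C(\mathbb{S}^{m-1})}{M_n}$ with $\mathbb{S}^{m-1}(n)$ via the universal picture \eqref{universal_picture_of_Sm}, invoke Lemma \ref{third_corollary} to obtain the piecewise differentiable path in $\mathbb{S}^{m-1}(n)\cap B_\eth(\cdot,\varepsilon)$, and then use universality to produce the $\ast$-representation $\pi_t$ at each time $t$. Your version is slightly more explicit about the bijection and about why the path respects the sphere relation, but the argument is the same transcription of the preceding two theorems that the paper gives.
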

\begin{proof}
Since we have that $C(\mathbb{S}^{m-1})$ is described by \ref{universal_picture_of_Sm}. It can be seen that 
 for any $\rho_n\in \Rep{C(\mathbb{S}^{m-1})}{M_n}$ the $m$-tuple $(\rho_n(h_1),\ldots,\rho_n(h_{m}))$ belongs to $\mathbb{S}^{m-1}(n)$. 
 By Lemma \ref{third_corollary}, for any given $\varepsilon>0$, there is $\delta> 0$ such that 
 for any two elements $\rho_n,\rho'_n\in\Rep{C(\mathbb{S}^{m-1})}{M_n}$ that satisfy the constraint
 \[
 \eth((\rho_n(h_1),\ldots,\rho_n(h_m)),(\rho'_n(h_1),\ldots,\rho'_n(h_m)))\leq \delta,
 \] 
 there is a piecewise differentiable path $\gamma\in C([0,1],\mathbb{S}^{m-1}(n))$ that solves the problem
 \[
  (\rho_n(h_1),\ldots,\rho_n(h_m)) \rightsquigarrow_{\varepsilon} (\rho'_n(h_1),\ldots,\rho'_n(h_m)),
 \]
relative to $\mathbb{S}^{m-1}(n)\cap B_\eth(\rho_n(h_1),\ldots,\rho_n(h_m),\varepsilon)$. By universality of $C(\mathbb{S}^{m-1})$ we will have that there is a $\ast$-representation
\[
\pi_t:C(\mathbb{S}^{m-1}) \to C^*(\gamma(t)[1],\ldots,\gamma(t)[m])\subseteq M_n
\]
for each $0\leq t\leq 1$. This completes the proof.
\end{proof}

\subsubsection{Spherical Unitaries and Completely Positive Maps on Matrix Algebras} Given $m,n\in \mathbb{Z}^+$, let us consider any unital completely positive ({\bf CP}) map (in the sense of \cite{CP_maps_Choi} and \cite{CP_fixed_points}) $\psi_{\mathbf{S}}:\Minf\to \Minf$ determined for each $X\in M_n$ by some $m$-tuple $\mathbf{S}=(S_1,\ldots,S_m)\in M_n^m$ by the expression
\begin{equation}
\psi_{\mathbf{S}}(X)=\sum_{k=1}^m S_jXS_j^\ast,
\label{def_spherical_cp_map}
\end{equation}
We will call the map $\psi_{\mathbf{S}}$, the CP map induced by $\mathbf{S}$. If in addition we have that $\mathbf{S}\in \mathbb{D}^m(n)$, and if the elements of $\mathbf{S}$ satisfy the contraint $\sum_{j=1}^m S_j S_j^\ast=\mathbf{1}_n$, we will call a map like $\psi_{\mathbf{S}}$ a $m-$spherical CP ($m$-SCP) map, since the $m$-tuple $\mathbf{S}$ is a matrix representation of the universal operators known as spherical unitaries in the sense of \cite{Davidson_1} and \cite{CP_fixed_points}. The set of $m$-SCP maps acting on $M_n$ will be denoted by $\mathrm{SCP}_m(M_n)$, similarly the set of $m$-SCP maps acting on $\Minf$ will be denoted by $\mathrm{SCP}_m(\Minf)$ . We will call any $m$-tuple like $\mathbf{S}$ a spherical unitary, and will denote by $\mathbb{SU}^m(n)$ the set of all spherical unitaries in $M_n^m$ described as follows. 
\begin{equation}
\mathbb{SU}^{m}(n)=\left\{\mathbf{S}\in \mathbb{D}^{m}(n)\:\left|\: \sum_{k=1}^m S_kS_k^\ast=\mathbf{1}_n\right.\right\}
\label{SU_m_rep}
\end{equation}

\begin{lemma}
\label{spherical_unitary_connectivity}
The matrix semialgebraic variety $\mathbb{SU}^{m}(n)$ is {\bf ULPDC}.
\end{lemma}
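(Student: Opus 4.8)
The plan is to imitate the proofs of Lemma \ref{secondary_corollary} and Lemma \ref{third_corollary}, using the sphere $\mathbb{S}^{2m-1}\subseteq[-1,1]^{2m}\subseteq\RR^{2m}$ as the target manifold and the hermitian partition/juncture pair $(\pi_H,\upsilon_H)$ as the change of variables. First I would establish that $\pi_H$, defined by $\pi_H(\mathbf{S})=(Re(S_1),Im(S_1),\ldots,Re(S_m),Im(S_m))$, restricts to a bijection $\mathbb{SU}^{m}(n)\to\mathbb{V}^{2m}[\mathbb{S}^{2m-1}](n)$ with inverse $\upsilon_H\colon(X_1,X_2,\ldots,X_{2m-1},X_{2m})\mapsto(X_1+\mathbf{i}X_2,\ldots,X_{2m-1}+\mathbf{i}X_{2m})$. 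Indeed, if $\mathbf{S}\in\mathbb{SU}^{m}(n)$ then the $S_k$ are pairwise commuting normal contractions, hence simultaneously diagonalizable, so their real and imaginary parts are $2m$ pairwise commuting hermitian contractions; every joint eigenvalue of $\pi_H(\mathbf{S})$ has the form $(Re(\lambda_1),Im(\lambda_1),\ldots,Re(\lambda_m),Im(\lambda_m))$ for a joint eigenvalue $(\lambda_1,\ldots,\lambda_m)$ of $\mathbf{S}$, and the constraint $\sum_k S_kS_k^\ast=\mathbf{1}_n$ forces $\sum_k|\lambda_k|^2=1$, i.e. $\Lambda(\pi_H(\mathbf{S}))\subseteq\mathbb{S}^{2m-1}$. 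Conversely, for $\mathbf{H}=(H_1,\ldots,H_{2m})\in\mathbb{V}^{2m}[\mathbb{S}^{2m-1}](n)$ the components of $\upsilon_H(\mathbf{H})$ pairwise commute and are normal (each $S_k=H_{2k-1}+\mathbf{i}H_{2k}$ satisfies $S_k^\ast S_k=S_kS_k^\ast=H_{2k-1}^2+H_{2k}^2$); reading off the joint spectrum of $\sum_j H_j^2$ from $\Lambda(\mathbf{H})\subseteq\mathbb{S}^{2m-1}$ via simultaneous diagonalization gives $\sum_k S_kS_k^\ast=\sum_{j=1}^{2m}H_j^2=\mathbf{1}_n$ as well as $\|S_k\|\leq 1$, so $\upsilon_H(\mathbf{H})\in\mathbb{SU}^{m}(n)$; since $\upsilon_H\circ\pi_H$ and $\pi_H\circ\upsilon_H$ are the respective identities, the correspondence is bijective.

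Next I would observe that $\mathbb{S}^{2m-1}$ is a compact differentiable $(2m-1)$-manifold contained in $[-1,1]^{2m}\subseteq\RR^{2m}$, so Theorem \ref{main_generic_result} gives that $\mathbb{V}^{2m}[\mathbb{S}^{2m-1}](n)$ is \textbf{ULPDC}, with a tolerance function independent of $n$. To transfer this to $\mathbb{SU}^{m}(n)$ I would use that $\pi_H$ and $\upsilon_H$ are Lipschitz for $\eth$: the triangle inequality and $\|A^\ast-B^\ast\|=\|A-B\|$ give $\eth(\pi_H(\mathbf{S}),\pi_H(\mathbf{T}))\leq\eth(\mathbf{S},\mathbf{T})$ and $\eth(\upsilon_H(\mathbf{H}),\upsilon_H(\mathbf{K}))\leq 2\,\eth(\mathbf{H},\mathbf{K})$. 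Then, exactly as in the proof of Lemma \ref{secondary_corollary}, given $\varepsilon>0$ I pick $\delta>0$ from the \textbf{ULPDC} property of $\mathbb{V}^{2m}[\mathbb{S}^{2m-1}](n)$ with tolerance $\varepsilon/2$; if $\eth(\mathbf{S},\mathbf{T})<\delta$ then $\pi_H(\mathbf{S})$ and $\pi_H(\mathbf{T})$ are $(\delta,\eth)$-close in $\mathbb{V}^{2m}[\mathbb{S}^{2m-1}](n)$, so there is a piecewise differentiable path $\gamma\in C([0,1],\mathbb{V}^{2m}[\mathbb{S}^{2m-1}](n))$ solving $\pi_H(\mathbf{S})\rightsquigarrow_{\varepsilon/2}\pi_H(\mathbf{T})$ relative to $\mathbb{V}^{2m}[\mathbb{S}^{2m-1}](n)\cap B_\eth(\pi_H(\mathbf{S}),\varepsilon/2)$, and $\hat{\gamma}(t):=\upsilon_H(\gamma(t))$ is a piecewise differentiable path in $\mathbb{SU}^{m}(n)$ with $\hat{\gamma}(0)=\mathbf{S}$, $\hat{\gamma}(1)=\mathbf{T}$ and $\eth(\hat{\gamma}(t),\mathbf{S})\leq 2\,\eth(\gamma(t),\pi_H(\mathbf{S}))<\varepsilon$. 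This exhibits $\mathbf{S}\rightsquigarrow_\varepsilon\mathbf{T}$ relative to $\mathbb{SU}^{m}(n)\cap B_\eth(\mathbf{S},\varepsilon)$ with $\delta$ independent of $n$, which is precisely the \textbf{ULPDC} property.

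The main obstacle I anticipate is entirely contained in the first step: checking that the juncture of a tuple in $\mathbb{V}^{2m}[\mathbb{S}^{2m-1}](n)$ is a genuine spherical unitary, i.e. that each component is a contraction and that $\sum_k S_kS_k^\ast$ equals $\mathbf{1}_n$ on the nose rather than merely being dominated by it. Both reduce to identifying the joint spectrum of $\sum_{j=1}^{2m}H_j^2$ with the singleton $\{1\}$ after simultaneous diagonalization of the commuting hermitian tuple $\mathbf{H}$, using $\Lambda(\mathbf{H})\subseteq\mathbb{S}^{2m-1}$; once this bookkeeping is in place the remainder is a routine transcription of the proof of Lemma \ref{secondary_corollary} with $\mathbb{T}^m$ replaced by $\mathbb{S}^{2m-1}$ and with the interleaved partition map $\pi_H$ unchanged.
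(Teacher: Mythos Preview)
Your proposal is correct and follows essentially the same approach as the paper's own proof: identify $\mathbb{SU}^{m}(n)$ with $\mathbb{V}^{2m}[\mathbb{S}^{2m-1}](n)$ via the hermitian partition/juncture pair $(\pi_H,\upsilon_H)$, invoke Theorem \ref{main_generic_result} on the target, and pull the path back. Your write-up is in fact more careful than the paper's, which asserts the bijection and the transfer without spelling out the Lipschitz constants or the verification that $\upsilon_H$ lands back in $\mathbb{SU}^{m}(n)$.
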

\begin{proof}
Let $m\in \mathbb{Z}^+$ and let us consider the maps $\pi_H$ and $\upsilon_H$ implemented in the proof of 
Lemma \ref{secondary_corollary}. Since $\mathbb{SU}^m(n)$ is defined by \eqref{SU_m_rep}, we will have that for any $\mathbf{S}\in\mathbb{SU}^m(n)$ we have that $\Lambda(\pi_H(\mathbf{S}))\subseteq \mathbb{S}^{2m-1}\subseteq \Pi_{j=1}^{2m} [-1,1]\subseteq \mathbb{R}^{2m}$. As in 
the proof of Lemma \ref{secondary_corollary} we will have that there is a one to one correspondence between $\mathbb{SU}^m(n)$ and $V^{2m}(\mathbb{S}^{2m-1})(n)$. Because of this correspondence, we have that any $\mathbf{S}\in \mathbb{SU}^m(n)$ has a representation of the form $\mathbf{S}=\upsilon_H(\pi_H(\mathbf{S}))$. 

Given  any two $(\nu,\eth)$-close tuples $\mathbf{S},\mathbf{T}\in \mathbb{SU}^m(n)$ for some given $\nu>0$, we will have that the tuples $\pi_H(\mathbf{S}),\pi_H(\mathbf{T})\in V^{2m}[\mathbb{S}^{2m-1}](n)$ will be $(\nu,\eth)$-close. By Theorem \ref{main_generic_result} we have that $V^{2m}(\mathbb{S}^{2m-1})(n)$ is {\bf ULPDC}, this implies that given $\varepsilon>0$, there is $\delta>0$ such that if $\nu<\delta$, there is a piecewise differentiable path $\gamma\in C([0,1],V^{2m}[\mathbb{S}^{2m-1}](n))$ that solves the connectivity problem $\pi_H(\mathbf{S})\rightsquigarrow_{\varepsilon/2} \pi_H(\mathbf{T})$ relative to $V^{2m}[\mathbb{S}^{2m-1}](n)\cap B_\eth(\pi_H(\mathbf{U}),\varepsilon/2)$. By the previous facts we will have that the path $\hat{\gamma}$ defined by the equation $\hat{\gamma}(t)=\upsilon_H(\gamma(t))$ for each $t\in [0,1]$, solves the connectivity problem $\mathbf{S}\rightsquigarrow_\varepsilon \mathbf{T}$ relative to $\mathbb{SU}^m(n)\cap B_\eth(\mathbf{U},\varepsilon)$. This completes the proof.
\end{proof}

\begin{theorem}
\label{universal_spherical_unitary_connectivity}
For any integer $m\geq 1$ and any universal spherical unitary $\mathbf{S}=(s_1,\ldots,s_m)$ we have that $\Rep{C^\ast(\mathbf{S})}{\Minf}$ is {\bf ULPDC}.
\end{theorem}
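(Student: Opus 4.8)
The plan is to reproduce, for $C^\ast(\mathbf{S})$, the argument already used above for $\Rep{C([-1,1]^m)}{\Minf}$, $\Rep{C(\TT[m])}{\Minf}$ and $\Rep{C(\mathbb{S}^{m-1})}{\Minf}$: first identify the matrix variety parametrizing the $\ast$-representations of $C^\ast(\mathbf{S})$ on $M_n$, then invoke the {\bf ULPDC} property of that variety, and finally pull the resulting connecting paths back through the universal property. Here the relevant variety will be $\mathbb{SU}^m(n)$ and the relevant connectivity statement Lemma \ref{spherical_unitary_connectivity}.

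I would begin by pinning down the universal picture of $C^\ast(\mathbf{S})$. A universal spherical unitary $\mathbf{S}=(s_1,\ldots,s_m)$ (in the sense of \cite{Davidson_1,CP_fixed_points}) is a commuting $m$-tuple of normal elements subject to the single additional relation $\sum_{j=1}^m s_j s_j^\ast=1$, so that
\begin{equation*}
C^\ast(\mathbf{S})=C^*_1\left\langle s_1,\ldots,s_m \left|
\begin{array}{l}
[s_j,s_k]=[s_j,s_k^\ast]=0\\
\sum_{j=1}^m s_j s_j^\ast=1
\end{array}, 1\leq j,k\leq m\right.\right\rangle,
\end{equation*}
which is canonically $\ast$-isomorphic to $C(\mathbb{S}^{2m-1})$ once the joint spectrum is identified with the unit sphere of $\CC^m$. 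From this picture it is immediate that for every $n$ and every $\rho_n\in\Rep{C^\ast(\mathbf{S})}{M_n}$ the $m$-tuple $(\rho_n(s_1),\ldots,\rho_n(s_m))$ satisfies the relations \eqref{SU_m_rep} defining $\mathbb{SU}^m(n)$, and conversely each element of $\mathbb{SU}^m(n)$ determines such a representation.

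Granting this, the rest proceeds exactly as in the three theorems just cited. Fixing $\varepsilon>0$, Lemma \ref{spherical_unitary_connectivity} supplies a $\delta>0$, independent of $n$, so that whenever $\rho_n,\rho'_n\in\Rep{C^\ast(\mathbf{S})}{M_n}$ satisfy
\[
\eth((\rho_n(s_1),\ldots,\rho_n(s_m)),(\rho'_n(s_1),\ldots,\rho'_n(s_m)))\leq\delta,
\]
there is a piecewise differentiable path $\gamma\in C([0,1],\mathbb{SU}^m(n))$ with $\gamma(0)=(\rho_n(s_1),\ldots,\rho_n(s_m))$, $\gamma(1)=(\rho'_n(s_1),\ldots,\rho'_n(s_m))$ and $\gamma(t)\in\mathbb{SU}^m(n)\cap B_\eth(\gamma(0),\varepsilon)$ for all $t$; setting $\hat{X}^j:=\gamma_j$ then gives piecewise differentiable contractive normal matrix paths that solve $\rho_n(s_j)\rightsquigarrow_{\varepsilon}\rho'_n(s_j)$ relative to $N_\eth(\rho_n(s_j),\varepsilon)$. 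For each $t\in[0,1]$ the tuple $\gamma(t)$ lies in $\mathbb{SU}^m(n)$, hence satisfies the displayed spherical-unitary relations, so by universality of $C^\ast(\mathbf{S})$ there is a $\ast$-representation $\pi_t:C^\ast(\mathbf{S})\to C^\ast(\gamma(t)[1],\ldots,\gamma(t)[m])\subseteq M_n$ with $\pi_t(s_j)=\gamma_j(t)$. These facts together verify every clause of the definition of the {\bf ULPDC} property of $\Rep{C^\ast(\mathbf{S})}{\Minf}$ from \S\ref{def.ulac}.

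The step I expect to require genuine care is the first: confirming that the universal $C^\ast$-algebra generated by a spherical unitary is precisely the one whose $M_n$-representations are parametrized by $\mathbb{SU}^m(n)$ --- equivalently, by $V^{2m}[\mathbb{S}^{2m-1}](n)$ under the correspondence $\pi_H,\upsilon_H$ used in the proof of Lemma \ref{spherical_unitary_connectivity}. This is the familiar realization of spherical unitaries as the normal representations supported on the unit sphere of $\CC^m$, but it must be set up carefully, since the contractivity bounds $\|s_j\|\leq 1$ become automatic only after the relation $\sum_j s_j s_j^\ast=1$ is imposed. Once that identification is in hand, the remainder is the purely mechanical transfer of Lemma \ref{spherical_unitary_connectivity} through universality, in complete parallel with the preceding theorems.
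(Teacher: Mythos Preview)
Your proposal is correct and follows essentially the same route as the paper: identify the $M_n$-representations of $C^\ast(\mathbf{S})$ with $\mathbb{SU}^m(n)$ via the universal presentation, invoke Lemma \ref{spherical_unitary_connectivity} to obtain the connecting path in $\mathbb{SU}^m(n)$, and then use universality to produce the $\ast$-representations $\pi_t$ along the path. The only cosmetic difference is that the paper records the relations as $[s_j,s_k]=0=s_js_j^\ast-s_j^\ast s_j$ rather than your equivalent $[s_j,s_k]=[s_j,s_k^\ast]=0$, and it does not spell out the identification with $C(\mathbb{S}^{2m-1})$ or the automatic contractivity that you (rightly) flag as the one point needing care.
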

\begin{proof}
Since we have that $C^\ast(\mathbf{S})$ is described in terms of generators and relations by the expression. 
\begin{equation}
C^\ast(\mathbf{S}):=
 C^*_1\left\langle s_1,\ldots,s_m \left| 
                     \begin{array}{l}
                        [s_j,s_k]=0=s_js_j^\ast-s_j^\ast s_j, \:\: 1\leq j,k\leq m\\
                       \sum_{j=1}^m s_js_j^\ast=1
                      \end{array}
\right.\right\rangle.
\label{uinversal_SU_rep}
\end{equation}
It can be seen that 
 for any $\rho_n\in \Rep{C^\ast(\mathbf{S})}{M_n}$ the $m$-tuple $(\rho_n(s_1),\ldots,\rho_n(s_{m}))$ belongs to $\mathbb{SU}^{m}(n)$. 
 By Lemma \ref{spherical_unitary_connectivity}, for any given $\varepsilon>0$, there is $\delta> 0$ such that 
 for any two elements $\rho_n,\rho'_n\in\Rep{C^\ast(\mathbf{S})}{M_n}$ that satisfy the constraint
 \[
 \eth((\rho_n(s_1),\ldots,\rho_n(s_m)),(\rho'_n(s_1),\ldots,\rho'_n(s_m)))\leq \delta,
 \] 
 there is a piecewise differentiable path $\gamma\in C([0,1],\mathbb{SU}^{m}(n))$ that solves the problem
 \[
  (\rho_n(s_1),\ldots,\rho_n(s_m)) \rightsquigarrow_{\varepsilon} (\rho'_n(s_1),\ldots,\rho'_n(s_m)),
 \]
relative to $\mathbb{SU}^{m}(n)\cap B_\eth(\rho_n(s_1),\ldots,\rho_n(s_m),\varepsilon)$. By universality of $C^\ast(\mathbf{S})$ we will have that there is a $\ast$-representation
\[
\pi_t:C^\ast(\mathbf{S}) \to C^*(\gamma(t)[1],\ldots,\gamma(t)[m])\subseteq M_n
\]
for each $0\leq t\leq 1$. This completes the proof.
\end{proof}

Let us write $\mathrm{CP}(M_n)$ to denote the set of completely positive linear maps on $M_n$, let us extend the {\bf ULPDC} property to subsets of $\mathrm{CP}(M_n)$.

\begin{definition} 
We say that s subset $\mathcal{S}\subseteq \mathrm{CP}(M_n)$ is {\bf ULPDC} if given two maps $\psi_{\mathbf{S}},\psi_{\mathbf{T}}\in \mathcal{S}$ induced by two $m$-tuples $\mathbf{S},\mathbf{T}\in M_n^m$ and given $\varepsilon>0$, there is $\delta>0$ such that if $\eth(\mathbf{S},\mathbf{T})< \delta$, then there is a piecewise differentiable path $\{\Psi_t\}_{t\in[0,1]}\subseteq \mathcal{S}$ such that $\Psi_0=\psi_{\mathbf{S}}$, $\Psi_1=\psi_{\mathbf{T}}$, and $\|\Psi_{t}(X)-\Psi_{\mathbf{S}}(X)\|<\varepsilon$ for each $t\in [0,1]$ and any contraction $X\in M_n$.
\end{definition}

The following lemma was motivated by some connections between the connectivity properties of $\mathrm{CP}(\Minf)$ and the classification of quantum phases in the sense of \cite{connectivity_CP_maps_q_phases}.

\begin{lemma}
\label{SCP_maps_connectivity}
For any integer $m\geq 1$ we have that $\mathrm{SCP}_m(\Minf)$ is {\bf ULPDC}.
\end{lemma}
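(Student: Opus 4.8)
The plan is to transport the connectivity of the parameter space $\mathbb{SU}^m(n)$ established in Lemma \ref{spherical_unitary_connectivity} to the space of $m$-SCP maps through the assignment $\mathbf{S}\mapsto\psi_{\mathbf{S}}$. First I would record the obvious dictionary: by definition $\psi_{\mathbf{S}}\in\mathrm{SCP}_m(M_n)$ precisely when $\mathbf{S}=(S_1,\ldots,S_m)\in\mathbb{SU}^m(n)$, and conversely every $\mathbf{S}\in\mathbb{SU}^m(n)$ gives, via \eqref{def_spherical_cp_map}, an element of $\mathrm{SCP}_m(M_n)$. Moreover $\mathbf{S}\mapsto\psi_{\mathbf{S}}$ is, entrywise, a bilinear expression in $\mathbf{S}$ and $\mathbf{S}^\ast$, so it sends (piecewise) differentiable paths in $\mathbb{SU}^m(n)$ to (piecewise) differentiable paths in $\mathrm{SCP}_m(M_n)$.

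Now fix $n$, two maps $\psi_{\mathbf{S}},\psi_{\mathbf{T}}\in\mathrm{SCP}_m(M_n)$ with $\mathbf{S},\mathbf{T}\in\mathbb{SU}^m(n)$, and $\varepsilon>0$; choose $\varepsilon'>0$ with $2m\varepsilon'<\varepsilon$. By Lemma \ref{spherical_unitary_connectivity} there is $\delta>0$, not depending on $n$, such that whenever $\eth(\mathbf{S},\mathbf{T})<\delta$ there is a piecewise differentiable path $\gamma\in C([0,1],\mathbb{SU}^m(n))$ with $\gamma(0)=\mathbf{S}$, $\gamma(1)=\mathbf{T}$, and $\eth(\gamma(t),\mathbf{S})\leq\varepsilon'$ for all $t$. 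Set $\Psi_t:=\psi_{\gamma(t)}$. Because $\gamma(t)\in\mathbb{SU}^m(n)$ each $\Psi_t$ lies in $\mathrm{SCP}_m(M_n)$, the endpoint identities $\Psi_0=\psi_{\mathbf{S}}$ and $\Psi_1=\psi_{\mathbf{T}}$ are immediate, and $t\mapsto\Psi_t$ is piecewise differentiable by the remark above.

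Finally I would check the uniform bound. For any contraction $X\in M_n$ and any $t$,
\[
\Psi_t(X)-\psi_{\mathbf{S}}(X)=\sum_{j=1}^m\bigl(\gamma_j(t)X\gamma_j(t)^\ast-S_jXS_j^\ast\bigr)=\sum_{j=1}^m\Bigl((\gamma_j(t)-S_j)X\gamma_j(t)^\ast+S_jX(\gamma_j(t)-S_j)^\ast\Bigr),
\]
and since $\|\gamma_j(t)\|\leq1$, $\|S_j\|\leq1$, $\|X\|\leq1$,
\[
\|\Psi_t(X)-\psi_{\mathbf{S}}(X)\|\leq 2\sum_{j=1}^m\|\gamma_j(t)-S_j\|\leq 2m\,\eth(\gamma(t),\mathbf{S})\leq 2m\varepsilon'<\varepsilon.
\]
Hence $\mathrm{SCP}_m(M_n)$ is {\bf ULPDC} with $\delta$ independent of $n$, i.e.\ $\mathrm{SCP}_m(\Minf)$ is {\bf ULPDC}.

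I do not anticipate a genuine obstruction: the substance is entirely contained in Lemma \ref{spherical_unitary_connectivity}, and the only points requiring attention are that the constraint $\sum_jS_jS_j^\ast=\mathbf{1}_n$ (together with commutativity and normality, all encoded in $\mathbb{SU}^m(n)$) persists along the path $\gamma$, so the $\Psi_t$ never leave $\mathrm{SCP}_m$, and that the telescoping estimate above stays uniform over all matrix sizes $n$ and all contractions $X$, which it does because every factor appearing is a contraction.
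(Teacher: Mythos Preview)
Your proposal is correct and follows essentially the same approach as the paper: both invoke Lemma~\ref{spherical_unitary_connectivity} with tolerance $\varepsilon/(2m)$ (your $\varepsilon'$), set $\Psi_t:=\psi_{\gamma(t)}$, and bound $\|\Psi_t(X)-\psi_{\mathbf{S}}(X)\|$ by $2m\,\eth(\gamma(t),\mathbf{S})$ via the same telescoping estimate. Your write-up is in fact slightly more explicit in displaying the decomposition $(\gamma_j(t)-S_j)X\gamma_j(t)^\ast+S_jX(\gamma_j(t)-S_j)^\ast$, whereas the paper simply asserts the resulting inequality chain.
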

\begin{proof}
Let $m\in \mathbb{Z}^+$. It can be seen that for any $\mathbf{S},\mathbf{T}\in \mathbb{SU}^m(n)$ and any $X\in M_n$ such that $\|X\|\leq 1$, the following estimate holds for the corresponding induced CP maps.
\begin{eqnarray}
\|\psi_{\mathbf{S}}(X)-\psi_{\mathbf{T}}(X)\|&=&\left\|\sum_{j=1}^m S_jXS_j^\ast-\sum_{j=1}^mT_jXT_j^\ast\right\|\nonumber\\
											 &\leq&\sum_{j=1}^m\|S_jXS_j^\ast-T_jXT_j^\ast\|\nonumber\\
                                             &\leq&\sum_{j=1}^m2\|S_j-T_j\|\|X\|\nonumber\\
                                             &\leq&2m\eth(\mathbf{S},\mathbf{T}).
                                             \label{first_SCP_bound_estimate}
\end{eqnarray}
Since $\mathbb{SU}^m(n)$ is {\bf ULPDC} by Lemma \ref{spherical_unitary_connectivity}, we have that given $\varepsilon>0$, there is $\delta>0$ such that for any two $m$-tuples $\mathbf{S},\mathbf{T}\in \mathbb{SU}^m(n)$ that satisfy the constraint $\eth(\mathbf{S},\mathbf{T})<\delta$, there is a piecewise differentiable path $\gamma\in C([0,1],\mathbb{SU}^m(n))$ that solves the problem $\mathbf{S}\rightsquigarrow_{\varepsilon/(2m)} \mathbf{T}$ relative to $\mathbb{SU}^m(n)\cap B_\eth(\mathbf{S},\varepsilon/(2m))$. Let us set $\Psi_t=\psi_{\gamma(t)}$ for each $t\in [0,1]$. It can be seen that 
$\Psi_0=\psi_{\mathbf{S}}$, $\Psi_1=\psi_{\mathbf{T}}$, $\Psi_t\in \mathrm{SCP}_m(M_n)$ for each $t\in [0,1]$, and by the estimate \eqref{first_SCP_bound_estimate} we will have that for any $X\in M_n$ such that $\|X\|\leq 1$,
\[
\|\Psi_t(X)-\psi_{\mathbf{S}}(X)\|\leq 2m\eth(\gamma(t),\mathbf{S})< 2m(\varepsilon/(2m))=\varepsilon.
\]
This completes the proof.
\end{proof}

\section{Hints and Future Directions}
\label{hints}
The implications of the results in sections \S\ref{main_results} and \S\ref{applications} in uniform local path connectivity of other subsets of $CP(\Minf)$ will be further explored, some connections to fixed pont approximations for particular types of elements in $\mathrm{CP}(\Minf)$ will be studied as well. 

By considering matrix paths as continuous/differentiable analogies of numerical linear algebra algorithms in the sense of \cite{Chu_num_lin}, we will work on the adaptation and application of the results in sections \S\ref{main_results} and \S\ref{applications} to the structure-preserving approximation/perturbation of families of structured matrices with some particular spectral behavior in the sense of 
\cite{Spectral_refinement_1,Spectral_refinement_2,ASD_matrices,sim_block_diag,
Clustered_matrix_approximation}.

\section*{Acknowledgement}
I am very grateful with the Fields Institute for Research in Mathematical Sciences at the University of Toronto, for their support during my research visit to the Institute from July 31 to August 4, 2017. Much of the research reported in this document was carried out while I was visiting the Institute.

I am grateful with Terry Loring, Alexandru Buium, Alexandru Chirvasitu, Moody Chu, Marc Rieffel, Stanly Steinberg, Yossi Avron, Masoud Khalkhali and Concepci\'on Ferrufino, for several interesting questions and comments that have been very helpful for the preparation of this document.


\begin{thebibliography}{00}
 
  \bibitem{Spectral_refinement_1} {\sc M. Ahues, A. Largillier, F. Dias d'Almeida and P. B. Vasconcelos.} {\em 
 Spectral refinement on quasi-diagonal matrices.} Linear Algebra Appl. 401 (2005) 109–117.

 \bibitem{Spectral_refinement_2} {\sc M. Ahues, F. Dias d'Almeida, A. Largillier and P. B. Vasconcelos.} {\em 
 Spectral refinement for clustered eigenvalues of quasi-diagonal matrices.} Linear Algebra Appl. 413 (2006) 394–402.
 
  \bibitem{Bhatia_mat_book} {\sc R. Bhatia.} {\em Matrix Analysis.} Gaduate Texts in Mathematics 169. Springer-Verlag. 1997.
  
  \bibitem{Bhatia_matrix_inequalities} {\sc R. Bhatia} {\em Pinching, Trimming, Truncating, and Averaging of Matrices.}  Am. Math. Monthly Vol. 107, No. 7 (Aug. - Sep., 2000), pp. 602-608 

   \bibitem{Basic_homotopies} {\sc O. Bratteli, G. A. Elliott, D. E. Evans and A. Kishimoto.}  {\em Homotopy of a Pair of Approximately Commuting Unitaries in a Simple C$^*$-Algebra}, {\em J. Funct. Anal., 160}, p. 466-523. 1998.
  
 
 \bibitem{CP_maps_Choi} {\sc M.-D. Choi.} {\em Completely Positive Linear Maps on Complex Matrices.} Linear Algebra Appl. 10, 285-290 (1975).

  \bibitem{Chu_num_lin} {\sc M. T. Chu.} {\em Linear Algebra Algorithms as Dynamical Systems.} Acta Numer. (2008), pp. 001-086. 2008.
  
  \bibitem{Davidson_1} {\sc K. R. Davidson and Le T.} {\em Commutant Lifting for Commuting Row Contractions.} Bull. Lond. Math. Soc., Vol. 42, Issue 3, 1 June 2010, Pages 506–516.
 
 

 \bibitem{Rordam_Lin_Thm} {\sc P. Friis and M. R{\"o}rdam.} Almost commuting self-adjoint matrices - a short proof of Huaxin Lin's theorem. J. Reine Angew. 
 Math., 479:121–131, 1996.

 
  \bibitem{CLA_Freedman} {\sc M. H. Freedman and W. H. Press.} {\em Truncation of Wavelet Matrices: Edge Effects and the Reduction of 
 Topological Control} Linear Algebra Appl. 2:34:1-19 (1996)

 
 
 \bibitem{Algebraic_lifting_Hadwin} {\sc D. Hadwin.} {\em Lifting Algebraic Elements in $C^\ast$-Algebras.} J. Funct. Anal. 127, 431-437 (1995).
 


 \bibitem{Torus_Trick_Kirby} {\sc R. C. Kirby.} {\em Stable homeomorphisms and the annulus conjecture.} Ann. Math., Second Series, Vol. 89, 
 No. 3 (May, 1969), pp. 575-582
  
  
 
 \bibitem{H_Lin_Approx_Fin_Spec} {\sc H. Lin.} {\em Approximation by Normal Elements with Finite Spectra in Simple AF-Algebras.} J. Operator Theory 31(1994), 83-98.
 
 \bibitem{Lin_Theorem} {\sc H. Lin.} {\em Almost Commuting Selfadjoint Matrices and Applications.} In Operator algebras and their applications (Waterloo, ON, 1994/1995), volume 13 of Fields Inst. Commun., pages 193-233. Amer. Math. Soc., Providence, RI, 1997.
 
  \bibitem{Amenable_algebras_Lin} {\sc H. Lin.} {\em An introduction to the classification of amenable $C^*$-algebras.} World
 Scientific, River Edge, NJ, ISBN: 981-02-4680-3, pp 1–320. 2001.
 
 \bibitem{Vides_homotopies} {\sc T. A. Loring and F. Vides.} {\em Local Matrix Homotopies and Soft Tori.} arXiv:1605.06590 [math.OA]. 2016.
 
 \bibitem{sim_block_diag} {\sc T. Maehara and K. Murota.} {\em Algorithm for Error-Controlled Simultaneous Block-Diagonalization of Matrices.} SIAM J. Matrix Anal. Appl., 
 Vol. 32, No. 2, pp. 605-620, 2011.
 
 
  \bibitem{ASD_matrices} {\sc K. C. O'Meara and C. Vinsonhaler.} {\em On approximately simultaneously diagonalizable matrices.} Linear Algebra Appl. 412 (2006) 39-74.

 
 
 
 \bibitem{Finite_groups_Rieffel} {\sc M. A. Rieffel.} {\em Actions of Finite Groups on C*-Algebras.} Math. Scand. 47 (1980), 157-176. 1980.
 
 \bibitem{Vector_bundles_Rieffel} {\sc M. A. Rieffel.} {\em Vector Bundles and Gromov-Hausdorff Distance.} 
 J. K-theory 5(2010), 39-103.
 
 \bibitem{Clustered_matrix_approximation} {\sc B. Savas and I. S. Dhillon.} {\em Clustered Matrix Approximation.} SIAM J. Matrix Anal. Appl. Vol. 37, No. 4, pp. 1531-1555, 2016.

\bibitem{connectivity_CP_maps_q_phases} {\sc O. Szehr and M. M. Wolf.} {\em Connected components of irreducible maps and 1D quantum phases.}  J. Math. Phys., 57, 081901 (2016); doi: 10.1063/1.4960557.
 
  \bibitem{CP_fixed_points} {\sc H. Zhang and M. Xue.} {\em Fixed points of trace preserving completely positive maps.} Linear and Multilinear Algebra, 2016 Vol. 64, No. 3, 404–411.
   
\end{thebibliography}
\end{document}